\documentclass[11pt]{amsart}

\usepackage[usenames,dvipsnames,svgnames,table]{xcolor}
\usepackage[colorlinks=true, pdfstartview=FitV, linkcolor=blue, citecolor=blue, urlcolor=blue]{hyperref}

\usepackage{geometry}                
\geometry{letterpaper}                   
\usepackage{graphicx}
\usepackage{amssymb}
\usepackage{epstopdf}
\usepackage{lscape}
\usepackage[utf8]{inputenc}
\usepackage{tikz,caption}
\DeclareGraphicsRule{.tif}{png}{.png}{`convert #1 `dirname #1`/`basename #1 .tif`.png}
\usepackage{enumitem}
\setlist[itemize]{leftmargin=2em}
\setlist[enumerate]{leftmargin=2em}
\usepackage{booktabs}
\usepackage{multirow}
\usepackage{mathtools}

\definecolor{darkblue}{rgb}{0.0,0,0.7} 
\definecolor{darkred}{rgb}{0.7,0,0} 
\definecolor{darkgreen}{rgb}{0, .6, 0} 

\newcommand{\defncolor}{\color{darkred}}
\newcommand{\defn}[1]{{\defncolor\emph{#1}}} 

\newtheorem{theorem}{Theorem}[section]
\newtheorem{prop}[theorem]{Proposition}
\newtheorem{cor}[theorem]{Corollary}
\newtheorem{lemma}[theorem]{Lemma}

\theoremstyle{definition}
\newtheorem{definition}[theorem]{Definition}
\newtheorem{example}[theorem]{Example}
\newtheorem{remark}[theorem]{Remark}
\numberwithin{equation}{section}

\newcommand{\idiot}[1]{\vspace{5 mm}\par \noindent
\marginpar{\textsc{Note}}
\framebox{\begin{minipage}[c]{0.95 \textwidth}
#1 \end{minipage}}\vspace{5 mm}\par}

\renewcommand{\idiot}[1]{}

\def\NN{{\mathbb N}}
\def\CC{{\mathbb C}}

\def\mvdash{{\,\vdash\!\!\vdash}}

\def\dcl{{\{\!\!\{}}
\def\dcr{{\}\!\!\}}}

\def\o{\overline}

\newcommand{\End}{\operatorname{End}}
\newcommand{\Hom}{\operatorname{Hom}}

\newcommand{\SSYT}{\operatorname{SSYT}}
\newcommand{\SYT}{\operatorname{SYT}}
\newcommand{\SSMT}{\operatorname{SSMT}}
\newcommand{\SMT}{\operatorname{SMT}}
\newcommand{\cells}{\operatorname{cells}}

\newcommand{\cf}{\textit{cf.} }
\newcommand{\ie}{\textit{i.e.}}
\newcommand{\content}{\operatorname{content}}

\newcommand{\smallstirling}[2]{\left\{\!\begin{smallmatrix}#1\\#2\end{smallmatrix}\!\right\}}
\newcommand{\smallbinom}[2]{\left(\!\strut\begin{smallmatrix}#1\\#2\end{smallmatrix}\!\right)}
\newcommand\compactcdots{\makebox[1em][c]{.\hfil.\hfil.}}

\usepackage[colorinlistoftodos]{todonotes}


\newdimen\squaresize \squaresize=10pt
\newdimen\thickness \thickness=0.4pt

\def\square#1{\hbox{\vrule width \thickness
     \vbox to \squaresize{\hrule height \thickness\vss
        \hbox to \squaresize{\hss#1\hss}
     \vss\hrule height\thickness}
\unskip\vrule width \thickness}
\kern-\thickness}

\def\vsquare#1{\vbox{\square{$#1$}}\kern-\thickness}

\def\young#1{
\vbox{\smallskip\offinterlineskip
\halign{&\vsquare{##}\cr #1}}}

\def\thisbox#1{\kern-.09ex\fbox{#1}}
\def\downbox#1{\lower1.200em\hbox{#1}}
\newdimen\Squaresize \Squaresize=20pt
\newdimen\Thickness \Thickness=0.4pt

\def\Square#1{\hbox{\vrule width \Thickness
     \vbox to \Squaresize{\hrule height \Thickness\vss
        \hbox to \Squaresize{\hss#1\hss}
     \vss\hrule height\Thickness}
\unskip\vrule width \Thickness}
\kern-\Thickness}

\def\Vsquare#1{\vbox{\Square{$#1$}}\kern-\Thickness}

\title[Multiset partition insertion]{An insertion algorithm on multiset partitions with applications to diagram algebras}

\author[Colmenarejo]{Laura Colmenarejo}
\address[L. Colmenarejo]{Department of Mathematics and Statistics, UMass Amherst, 710 N Pleasant St, Amherst, U.S.A}
\email{laura.colmenarejo.hernando@gmail.com}
\urladdr{https://sites.google.com/view/l-colmenarejo/home}

\author[Orellana]{Rosa Orellana}
\address[R. Orellana]{Mathematics Department, Dartmouth College, 6188 Kemeny Hall,
Hanover, NH 03755, U.S.A.}
\email{Rosa.C.Orellana@dartmouth.edu}
\urladdr{https://math.dartmouth.edu/~orellana/}

\author[Saliola]{Franco Saliola}
\address[F. Saliola]{D\'epartement de math\'ematiques,
Universit\'e du Qu\'ebec \`a Montr\'eal, Canada}
\email{saliola.franco@uqam.ca}
\urladdr{http://lacim.uqam.ca/~saliola/}

\author[Schilling]{Anne Schilling}
\address[A. Schilling]{Department of Mathematics, University of California, One Shields
Avenue, Davis, CA 95616-8633, U.S.A.}
\email{anne@math.ucdavis.edu}
\urladdr{http://www.math.ucdavis.edu/\~{}anne}

\author[Zabrocki]{Mike Zabrocki}
\address[M. Zabrocki]{Department of Mathematics and Statistics,  York University, 4700 Keele Street, Toronto, 
Ontario M3J 1P3, Canada}
\email{zabrocki@mathstat.yorku.ca}
\urladdr{http://garsia.math.yorku.ca/~zabrocki/}

\frenchspacing

\begin{document}

\begin{abstract}
We generalize the Robinson--Schensted--Knuth algorithm to the insertion of two row arrays of 
multisets. This generalization leads to new enumerative results that have representation theoretic 
interpretations as decompositions of centralizer algebras and the spaces they act on.  In addition, 
restrictions on the multisets lead to further identities and representation theory analogues.    
For instance, we obtain a bijection between words of length $k$ with entries in $[n]$ and pairs of 
tableaux of the same shape with one being a standard Young tableau of size $n$ and the other
being a standard multiset tableau of content $[k]$. We also obtain an algorithm
from partition diagrams to pairs of a standard tableau and a standard
multiset tableau of the same shape, which has the remarkable property that it is
well-behaved with respect to restricting a representation to a subalgebra. This
insertion algorithm matches recent representation-theoretic results of
Halverson and Jacobson \cite{HJ}.
\end{abstract}

\maketitle

\section{Introduction}
\label{Introduction}

We explore a variant of the Robinson--Schensted--Knuth (RSK) algorithm, where we insert
multisets instead of integer entries.  If we restrict the multisets to all have size one,
the algorithm we are using is the usual RSK algorithm.
Applying this insertion to different arrays of multisets gives
rise to a purely enumerative result that is a combinatorial
manifestation of a double centralizer theorem from representation theory.
Although representation theory serves as a principal motivation for studying
these algorithms, no familiarity is assumed in our exposition of the
enumerative and combinatorial results.

\medskip

The RSK algorithm evolved over the last century from a procedure defined on
permutations (in the work of Robinson \cite{Robinson}) to a procedure defined
on finite sequences of integers (in the work of Schensted \cite{Schensted}) and
finally to a procedure defined on ``generalized permutations'' by Knuth
\cite{Knuth}. In each of these versions, the algorithm establishes
a correspondence between the initial input and pairs of combinatorial objects
called tableaux subject to certain constraints (see
Section~\ref{section.notation} for definitions).

Each of the above procedures reflects a classical direct-sum decomposition
result in representation theory.
While the reader will find more details in
Section~\ref{section.classical-RSK-rep-theory}, we present here
an overview.
Broadly speaking, we start with two families of operators,
say $\mathcal A$ and $\mathcal B$, acting on a vector space $V$,
and we determine the finest decomposition of $V$
into a direct sum of subspaces that are invariant for all the operators,
say $V = \bigoplus_{\lambda \in \Lambda} V^\lambda$ for some indexing
set $\Lambda$.
Under certain circumstances, the actions of $\mathcal A$ and $\mathcal B$
neatly separate the subspaces $V^\lambda$;
more precisely, $V^\lambda$ can be
expressed as a tensor product $U^\lambda \otimes W^\lambda$,
where the action of $\mathcal A$ only affects $U^\lambda$
and the action of $\mathcal B$ only affects $W^\lambda$.
Thus, we obtain the decomposition:
\begin{equation*}
    V \cong \bigoplus_{\lambda \in \Lambda} \left(U^\lambda \otimes W^\lambda\right).
\end{equation*}
At the combinatorial level, this decomposition implies that there is a bijection
between $\mathcal V$ and
    $\bigcup_{\lambda \in \Lambda}
    \left(\mathcal U^\lambda \times \mathcal W^\lambda\right)$,
where $\mathcal V$, $\mathcal U^\lambda$ and
$\mathcal W^\lambda$ denote bases of $V$, $U^\lambda$ and $W^\lambda$,
respectively.

One example of this is when $GL(V)$ and $S_k$ both act on $V^{\otimes k}$
(see Section~\ref{section.classical-RSK-rep-theory} for details).
In this case, we deduce the existence of a bijection between the set of finite
sequences of length~$k$ with entries in $\{1, 2, \ldots, \dim(V)\}$ (\ie,
a basis of $V^{\otimes k}$) and
the union of the set of pairs consisting of a semistandard tableau of shape
$\lambda$ with entries in $\{1, 2, \ldots, \dim(V)\}$
(\ie, a basis of $U^\lambda$)
and a standard tableaux of shape $\lambda$ and size $k$
(\ie, a basis of $W^\lambda$).
This is precisely what the RSK algorithm does (see Section~\ref{section.rsk}).

The above situation also holds for many other pairs of families of operators
acting on $V^{\otimes k}$; for instance: the partition algebra and the
symmetric group; the Brauer algebra and the orthogonal group; and the Hecke
algebra and the quantum group of type~$A$.

\medskip

In this paper, we adapt the RSK algorithm to the insertion of arrays of multisets.  
This adaptation gives combinatorial descriptions of other direct-sum decomposition
results in representation theory.  Furthermore, restrictions on the multisets result
in a bijection and an enumerative result relating sets of combinatorial objects.
For instance, by considering a vector space on which
both the symmetric group and the partition algebra act, we obtain a bijection
between words of length $k$ with entries in $[n]$ and pairs of tableaux of the
same shape with one being a standard Young tableau of size $n$ and the other
being a standard multiset tableau of content $[k]$.  We also obtain an algorithm
from monomials in a polynomial ring to pairs of a standard tableau and a standard
multiset tableau of the same shape and from elements of diagram algebras to pairs
of standard multiset tableaux.

Note that algorithms that relate partition diagrams and pairs of paths
in the Bratteli diagram for the partition algebras 
have been known since the late 1990s \cite{HalLew, MarRol}. 
These paths are referred as ``vacillating tableaux''
and they are analogues of a path in the Young's lattice,
which is the Bratteli diagram for the symmetric groups.
Paths in the Young lattice are encoded by standard Young tableaux.

Recently, a new combinatorial interpretation for the dimensions of the
irreducible representations for the partition algebra has appeared in the
literature \cite{BH,BHH,OZ,HJ,Halverson.2019}. In particular, Benkart and Halverson \cite{BH}
presented a bijection between vacillating tableaux and ``set-partition tableaux''
(tableaux whose entries are sets of positive integers).
There are two main advantages to working with set-partition tableaux instead
of vacillating tableaux. Firstly, they are closer in spirit to the ubiquitous
Young tableaux. Secondly, the definition extends naturally to the notion of
multiset tableaux (tableaux whose entries are multisets of positive integers)
and working with multiset tableaux leads to new enumerative and algebraic
results that are not obvious by other means (see Proposition~\ref{prop:twolinenewRSK},
Corollary~\ref{prop:enumnpk}, and Theorem~\ref{thm:settableaux}).

Our insertion algorithm from partition diagrams to pairs of a standard tableau and a standard
multiset tableau of the same shape has the remarkable property that it is well-behaved
with respect to the subalgebra structure of the partition algebra.
One surprising consequence is that we are able to provide explicit combinatorial
descriptions of the sets of tableaux that give the dimensions of the irreducible representations 
associated to the prominent subalgebras of the partition algebras, such
as the symmetric group, the Brauer algebra, the rook algebra, the rook-Brauer algebra, the Temperley--Lieb 
algebra, the Motzkin algebra, the planar rook algebra, and the planar algebra (see Lemma~\ref{lemma:tableauxdesc}).
This gives rise to analogues of the famous identity $n! = \sum_{\lambda \vdash n} (f^{\lambda})^2$ for the symmetric
group, where $f^\lambda$ is the number of standard tableaux of shape $\lambda$,
to all of the above mentioned algebras (see Corollary~\ref{alg-dim}). We prove that the dimensions of the
irreducible representations of the various algebras
is equal to the number of our combinatorially-defined tableaux
by establishing that the branching rules are encoded in the tableaux
(see Section~\ref{section.bratteli} and Corollary~\ref{irred-dim}).
Our insertion is different from combining the insertion of Halverson and Lewandowski~\cite{HalLew}
from partition diagrams to paths in the Bratteli diagram with the bijection of Halverson and Benkart~\cite{BH}
from paths in the Bratteli diagram to set-partition tableaux (see Section~\ref{section.bratteli}).

\medskip

The paper is organized as follows.
In Section~\ref{section.notation}, we define the principal combinatorial
objects used throughout this paper: multiset tableaux.
In Section~\ref{section.rsk}, we review the RSK algorithm for associating
a pair of tableaux to a generalized permutation.
The above discussion is expanded in Section~\ref{section.classical-RSK-rep-theory} 
by providing more details on how RSK on permutations, words, and generalized
permutations, reflects decomposition results in representation theory.
In Section~\ref{sec:app1}, the RSK algorithm is adapted to the multiset tableaux setting and 
corresponding enumerative results are obtained. The section opens with a description of the enumerative and
combinatorial results and closes by connecting these results with
representation theory.
Finally, in Section~\ref{sec:diagramalgebras} the algorithm is applied to
partition algebra diagrams and it is shown that the new insertion algorithm is well-behaved when
restricted to subalgebras. Finally, the connection to the representation theory
recently developed in~\cite{HJ} is established.

\subsection*{Acknowledgments}
The authors would like to thank BIRS and AIM for the opportunity to collaborate in Banff in April 2018
and in San Jos\'e in November 2018, which greatly facilitated the work on this project. 
We thank Tom Halverson for sharing an early version of~\cite{Halverson.2019} with us.

The second author was partially supported by NSF grant DMS-1700058 and the fourth author 
was partially supported by NSF grants DMS--1760329 and DMS--1764153.  
The third and fifth authors were supported by NSERC Discovery Grants.

\section{Multiset Tableaux}
\label{section.notation}

Throughout this paper, we work with tableaux whose entries are multisets.
Note that any Young tableau---that is, a tableau with integer entries---can be
viewed as a multiset tableau by considering each entry to be a multiset of cardinality $1$.
In this section, we fix notation and define the total orders on multisets that
we use in order to extend the property of being (semi)standard to multiset
tableaux.

\subsection{Partitions}
\label{section.partitions}
A \defn{partition} of $n \in \NN$ is a sequence of
positive integers $\lambda = (\lambda_1, \lambda_2, \ldots,
\lambda_r)$ with $\lambda_1 \geqslant \lambda_2 \geqslant \cdots
\geqslant \lambda_r > 0$ whose sum is $n$.
Note that the empty sequence $()$ is a partition of $0$.
The notation $\lambda \vdash n$ is used to indicate that $\lambda$ is
a partition of $n$.
The \defn{length} of the partition is denoted by $\ell(\lambda) = r$.
As is customary, we depict partitions as diagrams; see Example~\ref{ex:multiset-tableaux}.
The \defn{cells} of the partition are the
coordinates of the boxes in the diagram; that is,
$\cells(\lambda) = \{ (i,j) \mid 1 \leqslant i \leqslant \lambda_j, 1 \leqslant j \leqslant \ell(\lambda) \}$.  
The operation of removing the first row of the partition $\lambda$
is denoted by $\o\lambda=(\lambda_2, \lambda_3, \ldots, \lambda_r)$.

\subsection{Set partitions}
A \defn{set partition} $\pi$ of a set $S$ is a collection $\{\pi_1, \ldots, \pi_k\}$ of non-empty
subsets of $S$ that are mutually disjoint, i.e., $\pi_i\cap\pi_j =\emptyset$ for all $i\neq j$, and $\bigcup_{i} \pi_i = S$. 
The subsets $\pi_i$ are called the \defn{blocks} of the set partition.   We write $\pi\vdash S$ to mean that $\pi$ is a 
set partition of the set $S$. 

\subsection{Multisets}
\label{sec:multisets}

Let $(A, \leqslant_A)$ be a totally ordered set, which we refer to as an
\defn{(ordered) alphabet}.
A \defn{multiset} $S = \dcl a_1, a_2, \ldots, a_r \dcr$ over $A$
is an unordered collection of elements of $A$, allowing repeats.
The collection of multisets
forms an associative monoid with operation
\[
    \dcl a_1, a_2, \ldots, a_r \dcr \uplus
    \dcl b_1, b_2, \ldots, b_d \dcr =
    \dcl a_1, a_2, \ldots, a_r, b_1, b_2, \ldots, b_d \dcr~.
\]
To simplify notation, we let $\dcl {a}^{m_a}, b^{m_b}, c^{m_c}, \ldots \dcr$
denote the multiset that contains $m_a$ occurrences of $a$, $m_b$ occurrences
of $b$, and so on;
for example $\dcl 1^2, 4^3, 5\dcr = \dcl 1, 1, 4, 4, 4, 5\dcr$.

A \defn{multiset partition}\footnotemark{} of a multiset $S$ is a multiset of multisets,
$\pi = \dcl S^{(1)}, S^{(2)}, \ldots, S^{(r)} \dcr$, such that
$S = S^{(1)} \uplus S^{(2)} \uplus \cdots \uplus S^{(r)}$.
We indicate this by the notation $\pi \mvdash S$.
\footnotetext{Multisets are in bijection with integer vectors and multiset partitions
are in bijection with objects known as vector partitions \cite{Comtet, Gessel, MacMahon, Rosas}.
Since integer vectors can be identified with sequences of monomials in
a set of variables, another interpretation for multisets is as monomials in the
variables $\{ x_{c_1}, x_{c_2}, \ldots \}$.}

\subsection{Ordering multisets}
\label{sec:multiset-orders}

We will use two different methods to totally order the collection of all
multisets over an ordered alphabet $A$.
In Section~\ref{sec:app1}, we use \emph{graded lexicographic order}.
If $S = \dcl a_1, \ldots, a_r \dcr$ with $a_1 \leqslant_A \cdots \leqslant_A a_r$
and $S' = \dcl a'_1, \ldots, a'_t \dcr$ with $a'_1 \leqslant_A \cdots \leqslant_A a'_t$,
then we say $S < S'$ in the \defn{graded lexicographic order} if:
\begin{itemize}
    \item
        $r < t$; or

    \item
        $r = t$ and there exists $1 \leqslant i \leqslant t$ such that
        $a_1 = a'_1,
        \ldots,
        a_{i-1} = a'_{i-1}$,
        and
        $a_{i} <_A a'_{i}$.
\end{itemize}
This is a total order \cite{CLO}, with minimum element the empty multiset.

In Section~\ref{sec:diagramalgebras}, where we need only compare disjoint sets,
we use the \emph{last letter order}.
Given two disjoint sets $S$ and $S'$ with elements in an ordered set $A$, we say
$S<S'$ in the \defn{last letter order} if $\mbox{max}(S) <_A \mbox{max}(S')$,
where $\mbox{max}(S)$ is the largest element in $S$.
For example, $\{1,3,5\} < \{2,7\}$.
(This order can be realized as the restriction of a total order on multisets,
for example \emph{reverse lexicographic order}, but this is not
necessary here.)

\subsection{Multiset tableaux}
\label{sec:multiset-tableaux}

Let $\lambda$ be a partition, $A$ an ordered alphabet, and a fixed total order $<$ on multisets (such as the graded
lexicographic order or the last letter order if the multisets are all disjoint sets).
A \defn{semistandard multiset tableau} of \defn{shape} $\lambda$ is a function $T$ that
associates with each cell $(i, j) \in \cells(\lambda)$ a multiset over $A$
such that:
\begin{itemize}
    \item
        $T(i,j) \leqslant T(i,j+1)$ whenever $(i, j)$ and $(i, j+1)$ both belong to $\cells(\lambda)$; and
    \item
        $T(i,j) < T(i+1,j)$ whenever $(i, j)$ and $(i+1, j)$ both belong to $\cells(\lambda)$.
\end{itemize}
The \defn{shape} of a multiset tableau $T$ is the partition $\lambda$,
and the \defn{cells} of $T$ are the cells of its shape.
If $T(i, j) = S$, then we say that $S$ \defn{labels} the cell $(i, j)$,
and that $S$ is an \defn{entry} of $T$.

When drawing multiset tableaux, the multisets are abbreviated as words
without the surrounding multiset delimiters $\dcl, \dcr$, and empty
sets are depicted by blank cells.

\subsection{Content of multiset tableaux}\label{sec:content}
The \defn{content} of a semistandard multiset tableau $T$ is the (disjoint) union of the
entries of $T$. More precisely, the content of $T$ is the multiset
\begin{equation*}
    \content(T) = \biguplus_{(i, j) \in \cells(T)} T(i, j).
\end{equation*}
A semistandard multiset tableau is said to be
\begin{itemize}
    \item
        a \defn{standard multiset tableau} if its content is the set ${\defncolor[k]} := \{1,2, \ldots, k\}$ for some
        $k \in \NN$; in other words, each letter $1,2,\ldots,k$ appears exactly once in the tableau;
    \item
        a \defn{semistandard Young tableau}
        if all its entries are multisets of size $1$;
    \item
        a \defn{standard Young tableau} if it is both standard and
        a semistandard Young tableau.
\end{itemize}
Finally, for a multiset $S$, let
\begin{itemize}
    \item
        \defn{$\SSMT(\lambda, S)$} be the set of \emph{semistandard} multiset tableaux of shape $\lambda$ and content $S$;
    \item
        \defn{$\SMT(\lambda,k)$} be the set of \emph{standard} multiset tableaux of shape $\lambda$ and content $[k]$;
    \item
        \defn{$\SSYT(\lambda, S)$} be the set of semistandard Young tableaux of
        shape $\lambda$ and content $S$; and
    \item
        \defn{$\SYT(\lambda)$} be the set of standard Young tableaux of shape $\lambda$.
\end{itemize}
The \defn{set-partition tableaux} of~\cite[Definition 3.14]{BH} are closely related to our standard multiset tableaux.

\begin{example}
    \label{ex:multiset-tableaux}
    Let $A = \{1, 2, 3, 4, 5\}$ with the usual order on integers.
    Then
    \begin{equation*}
        \squaresize=17pt
        \young{114\cr2&14\cr&2&5\cr}
        \qquad
        \qquad
        \young{3\cr2&3\cr1&1&4\cr}
        \qquad
        \qquad
        \young{134\cr5&26\cr&7&8\cr}
    \end{equation*}
    are three semistandard multiset tableaux of shape $(3, 2, 1)$.
    The leftmost tableau has content $\dcl 1^3, 2^2, 4^2, 5\dcr$.
    The middle tableau has content $\dcl 1^2, 2, 3^2, 4\dcr$
    and is also a semistandard Young tableau.
    The rightmost multiset tableau is standard
    since its content is $\{1, \ldots, 8\}$.
\end{example}

\section{The RSK correspondence}
\label{section.rsk}

We present here the Robinson--Schensted--Knuth (RSK) algorithm for certain finite
sequences in $A \times B$, where $A$ and $B$ are totally ordered sets.
Our presentation is slightly more general than the original \cite{Knuth},
where it was defined on certain finite sequences in $\NN \times \NN$. The
proofs in \cite{Knuth} still hold in this context because they only make
use of the fact that $\NN$ is a totally ordered set.

\medskip

Let $A$ and $B$ be two ordered alphabets.
A \defn{generalized permutation} from $A$ to $B$ is a two-line array of the
form
$\left(\begin{smallmatrix} a_1 & a_2 & \cdots & a_r \\ b_1 & b_2 & \cdots& b_r \end{smallmatrix}\right)$
satisfying:
$a_1, \ldots, a_r \in A$; $b_1, \ldots, b_r \in B$;
$a_i \leqslant_A a_{i+1}$ for $1 \leqslant i \leqslant r-1$;
and $b_{i} \leqslant_B b_{i+1}$ whenever $a_i = a_{i+1}$.

The RSK algorithm constructs two semistandard tableaux of the same shape
from a generalized permutation
$\left(\begin{smallmatrix} a_1 & a_2 & \cdots & a_r \\ b_1 & b_2 & \cdots& b_r \end{smallmatrix}\right)$.
The cells of one of the tableaux are labelled by $a_1, \ldots, a_r$
and the cells of the other tableau are labelled by $b_1, \ldots, b_r$.
The key step in the algorithm is the following insertion procedure.

\begin{definition}[RSK insertion procedure]
    \label{defn:rsk}
    Let $T$ be a semistandard tableau with entries in an ordered set $X$ and $x
    \in X$. The \defn{(row) insertion} of $x$ into $T$ is the tableau
    defined recursively as follows, starting with $r = 1$.
    \begin{enumerate}
        \item
            If $x$ is greater than or equal to all entries of the $r$-th row of
            $T$ (this includes the case where the $r$-th row has no entries),
            then append $x$ to the end of the row; otherwise

        \item
            let $\widehat x$ be the leftmost entry of the row that
            is greater than $x$, replace $\widehat
            x$ with $x$, and insert the $\widehat x$ into row $r+1$
            (\ie, repeat step (1) with $x=\widehat x$ and $r$ replaced by $r+1$).
    \end{enumerate}
\end{definition}

Iterating this insertion procedure allows us to associate two semistandard
tableaux with any generalized permutation
$\left(\begin{smallmatrix} a_1 & a_2 & \cdots & a_r \\ b_1 & b_2 & \cdots& b_r \end{smallmatrix}\right)$
as follows (\cf Example~\ref{ex:rsk}).
\begin{itemize}
    \item
        Start with a pair of empty tableaux;
        \ie, tableaux with no rows and no columns.
    \item
        Insert $b_1$ into the first tableau using the insertion procedure;
        this introduces a new cell, say in position $(i_1, j_1)$;
        add a cell labelled $a_1$ in position $(i_1, j_1)$ of the second tableau.
    \item
        Insert $b_2$ into the first tableau, which introduces a new cell to the
        first tableau; label the corresponding cell in the second tableau by
        $a_2$.
    \item
        Repeat with $(a_3, b_3), (a_4, b_4), \ldots, (a_r, b_r)$.
\end{itemize}
The result is two semistandard tableaux of the same shape,
the first has entries $b_1, \ldots, b_r$,
and the second has entries $a_1, \ldots, a_r$.

\begin{example}
    \label{ex:rsk}
    Consider the alphabets $A = \{ a  <  b  <  c  <  d \}$
    and $B = \{ w  <  x  <  y  <  z \}$.
    The insertion procedure applied to the generalized permutation
    $\left(\begin{smallmatrix}
         a  &  b  &  b  &  c  &  c  &  d  &  d  &  d  \\
         x  &  y  &  z  &  w  &  y  &  w  &  y  &  y
    \end{smallmatrix}\right)$,
    gives the following sequence of tableaux.
    \def\arrow#1#2{{\tiny\xrightarrow{\begin{array}{c}\text{insert~$#1$}\\[-0.3ex]\text{record~$#2$}\end{array}}}}
    \def\myyoung#1{\begingroup\setbox0=\hbox{\young{#1}}\parbox{\wd0}{\box0}\endgroup}
    \begin{equation*}
        \small
        \squaresize=10pt
        \begin{array}{rcrlllcrlll}
            \emptyset
            & \arrow{x}{a} & \bigg( & \hspace{-0.5em} \myyoung{x \cr}     & \myyoung{ a \cr} & \hspace{-0.5em}\bigg)
            & \arrow{y}{b} & \bigg( & \hspace{-0.5em} \myyoung{ x         & y \cr}           & \myyoung{ a & b \cr}           & \hspace{-0.5em}\bigg)
            \\[1ex]
            & \arrow{z}{b} & \bigg( & \hspace{-0.5em} \myyoung{x          & y                & z \cr}      & \myyoung{a       & b           & b \cr}              & \hspace{-0.5em}\bigg)
            & \arrow{w}{c} & \bigg( & \hspace{-0.5em} \myyoung{x \cr w    & y                & z \cr}      & \myyoung{c \cr a & b           & b \cr}              & \hspace{-0.5em}\bigg)
            \\[1ex]
            & \arrow{y}{c} & \bigg( & \hspace{-0.5em} \myyoung{ x         & z  \cr  w        & y           & y  \cr}          & \myyoung{ c & c  \cr  a           & b                   & b  \cr}   & \hspace{-0.5em}\bigg)
            & \arrow{w}{d} & \bigg( & \hspace{-0.5em} \myyoung{ z \cr x         & y  \cr  w        & w           & y                \cr}     & \myyoung{ d \cr c         & c  \cr  a           & b         & b \cr} & \hspace{-0.5em}\bigg)
            \\[2ex]
            & \arrow{y}{d} & \Bigg( & \hspace{-0.5em} \myyoung{ z  \cr  x & y  \cr  w        & w           & y                & y  \cr}     & \myyoung{ d  \cr  c & c  \cr  a           & b         & b & d  \cr} & \hspace{-0.5em}\Bigg)
            & \arrow{y}{d} & \Bigg( & \hspace{-0.5em} \myyoung{ z  \cr  x & y  \cr  w        & w           & y                & y           & y  \cr}             & \myyoung{ d  \cr  c & c  \cr  a & b & b       & d & d  \cr} & \hspace{-0.5em}\Bigg)
        \end{array}
    \end{equation*}
\end{example}

\begin{theorem}[Knuth] \label{th:oldRSK}
There is a one-to-one correspondence between
generalized permutations from $A$ to $B$
and pairs of tableaux $(P, Q)$ satisfying:
$P$ and $Q$ are of the same shape;
$P$ is semistandard with entries in $B$;
and $Q$ is semistandard with entries in $A$.
\end{theorem}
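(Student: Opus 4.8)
The plan is to realize the correspondence via the insertion procedure of Definition~\ref{defn:rsk} in the forward direction, and to construct an explicit inverse by reverse row insertion. This follows Knuth's argument essentially verbatim, the only property of $A$ and $B$ used being that $\leqslant_A$ and $\leqslant_B$ are total orders. First I would establish that the forward map is well defined, namely that it produces a pair $(P,Q)$ of semistandard tableaux of the same shape. The heart of this is a single-insertion lemma: inserting one element $x \in B$ into a semistandard tableau $T$ yields a semistandard tableau with exactly one new cell, and the sequence of bumped entries traces out a ``bumping path'' that proceeds strictly downward and weakly to the left. Granting this, $P$ is semistandard at every stage and each insertion adds a single cell, so $Q$ receives exactly one new label per step and $P$ and $Q$ always share a common shape.

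The more delicate point is that the recording tableau $Q$ is semistandard. Here I would invoke Schensted's row-bumping lemma: if $x \leqslant_B x'$ are inserted successively, the new cell created by $x'$ lies strictly to the right of, and weakly above, the new cell created by $x$. Applying this within each maximal block of equal top-row entries $a_i = a_{i+1} = \cdots$, where the generalized-permutation hypothesis forces $b_i \leqslant_B b_{i+1} \leqslant_B \cdots$, shows that the cells of $Q$ labeled by a fixed value $a$ are added from left to right and occupy distinct columns (they form a horizontal strip). Since distinct values of $a$ are inserted in strictly increasing order, the successive shapes form a chain of partitions in which each value labels a horizontal strip; this is precisely the condition for $Q$ to be semistandard with entries in $A$.

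To obtain injectivity and surjectivity at once, I would construct the inverse map. Given $(P,Q)$, locate the largest entry of $Q$ and, among the cells carrying it, select the one in the rightmost column; by the horizontal-strip structure established above, this is exactly the cell added last, say in position $c$ with label $a_r$. Reverse row insertion starting from cell $c$ of $P$ --- repeatedly carrying the removed entry up one row and exchanging it for the rightmost strictly smaller entry, until an element is ejected from the first row --- recovers the last inserted value $b_r$. Deleting cell $c$ from both tableaux and iterating then reconstructs the two-line array, read off column by column as $\left(\begin{smallmatrix} a_r \\ b_r \end{smallmatrix}\right), \left(\begin{smallmatrix} a_{r-1} \\ b_{r-1} \end{smallmatrix}\right), \ldots$ from last to first.

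The main obstacle, and the step I would treat most carefully, is verifying that the two maps are mutually inverse with the ordering constraints matching exactly. Two facts are needed: (a) reverse insertion of a single element inverts forward insertion, which follows by tracking the bumping path row by row and checking that the reverse bump retraces it; and (b) the selection rule in $Q$ correctly identifies the last-inserted cell, and the reconstructed columns satisfy $a_1 \leqslant_A \cdots \leqslant_A a_r$ together with $b_i \leqslant_B b_{i+1}$ whenever $a_i = a_{i+1}$. Part (b) is again governed by the row-bumping lemma: the horizontal-strip structure of each value in $Q$ guarantees that peeling off the rightmost maximal cell returns the columns in the unique admissible order, so that every generalized permutation arises from exactly one pair $(P,Q)$ and conversely. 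This closes the bijection.
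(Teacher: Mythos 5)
Your proposal is correct and follows essentially the same route as the paper, which does not reprove the theorem but cites Knuth's original argument and observes that it carries over verbatim because only the total orderedness of $\NN$ is used; your sketch is precisely that argument (row insertion, the row-bumping/horizontal-strip lemma for the recording tableau, and reverse insertion for the inverse), with the same observation about $A$ and $B$ being arbitrary totally ordered sets.
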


Two special cases of this correspondence coincide with the
correspondences discovered by Robinson~\cite{Robinson} and Schensted~\cite{Schensted}.
\begin{enumerate}
    \item
By encoding a permutation $\sigma$ of size $n$
as the generalized permutation
$\left(\begin{smallmatrix} 1 & 2 & \cdots & n \\ \sigma_1 & \sigma_2 & \cdots& \sigma_n \end{smallmatrix}\right)$,
where $\sigma_i = \sigma(i)$,
we obtain a bijection between permutations of size $n$ and pairs of standard
Young tableaux of the same shape of size $n$.
    \item
By encoding a word $w_1 \cdots w_k$ as
$\left(\begin{smallmatrix} 1 & 2 & \cdots & k \\ w_1 & w_2 & \cdots& w_k \end{smallmatrix}\right)$,
we obtain a bijection between words of length $k$ with entries in $[n]$
and pairs of Young tableaux of the same shape, the first semistandard with entries in $[n]$
and the second standard with entries in $[k]$.
\end{enumerate}

\section{RSK and representation theory}
\label{section.classical-RSK-rep-theory}

As described in Section~\ref{Introduction}, the RSK algorithm is
a combinatorial manifestation of certain direct-sum decompositions
from representation theory.
Below we consider the bijections induced by the RSK algorithm on permutations,
on words, and on generalized permutations. For each bijection, we describe
a vector space equipped with an action by two families of operators, and the
decomposition of the space into a direct sum of invariant subspaces.

\subsection{RSK and the group algebra $\CC S_n$}
\label{ssec:RSK-SnxSn}
When considered as a procedure on permutations, RSK establishes
a correspondence between permutations of $[n]$
and pairs of standard tableaux of the same shape of size $n$.
If $f^\lambda := \#\SYT(\lambda)$ denotes the number of standard tableaux of shape $\lambda$,
then this correspondence gives the enumerative result:
\begin{equation}
    \label{n-factorial-decomp}
    n! = \sum_{\lambda \vdash n} \left(f^\lambda\right)^2.
\end{equation}

This formula reflects the following classic decomposition result in
representation theory.
Let $\CC S_n$ denote the group algebra of the symmetric group $S_n$ with
coefficients in $\CC$. This is both a left and a right module over $S_n$,
and so it admits a decomposition into simple two-sided $S_n$-modules.
This decomposition takes the form
\begin{equation}
    \label{group-algebra-decomp}
    \CC S_n \cong
    \bigoplus_{\lambda \vdash n}
    \big(S^\lambda\big)^\ast
    \otimes
    S^\lambda,
\end{equation}
where $\{S^\lambda \mid \lambda \vdash n\}$ is a complete set of
non-isomorphic simple right $S_n$-modules,
and $(S^\lambda)^\ast = \Hom_{S_n}(S^\lambda, \CC)$.
Note that since $S^\lambda$ is a \emph{right} $S_n$-module,
its dual $\Hom_{S_n}(S^\lambda, \CC)$
is a \emph{left} $S_n$-module.
One recovers Equation~\eqref{n-factorial-decomp}
from the decomposition in Equation~\eqref{group-algebra-decomp}
by computing the dimension of the vector spaces
and noting that
$\dim(S^\lambda) = \dim((S^\lambda)^\ast) = f^\lambda$.

\subsection{RSK and the $GL(V) \times S_k$-structure on $V^{\otimes k}$}
\label{ssec:RSK-GLxSk}
When considered as a procedure on finite sequences, RSK establishes
a correspondence between finite sequences of length $k$ with entries in $[n]$
and pairs of tableaux $(P, Q)$ satisfying: $P$ and $Q$ have the same shape;
$P$ is a semistandard tableau with entries in $[n]$;
and $Q$ is a standard tableau with entries $[k]$.
Enumeratively,
\begin{equation}
    \label{nk-decomp}
    n^k = \sum_{\lambda \vdash k} \# \SSYT_{n}(\lambda) \,\cdot\, \# \SYT(\lambda),
\end{equation}
where $\SSYT_n(\lambda)$ is the set of semistandard tableaux of shape
$\lambda$ with entries in $[n]$, and where $\SYT(\lambda)$ is the set of standard tableaux of shape
$\lambda$.

This formula also reflects a classic decomposition result in representation
theory.
Let $V = \CC^n$ and consider the tensor product $V^{\otimes k}$ of $V$ with
itself $k$ times. This is a left $GL_n$-module, with $g \in GL_n$ acting
on simple tensors by
\begin{equation*}
    g \cdot \big(v_1 \otimes v_2 \otimes \cdots \otimes v_k\big)
    = g(v_1) \otimes g(v_2) \otimes \cdots \otimes g(v_k);
\end{equation*}
as well as a right $S_k$-module, with $\sigma \in S_k$ acting on simple tensors
by
\begin{equation*}
    \big(v_1 \otimes v_2 \otimes \cdots \otimes v_k\big) \cdot \sigma
    = v_{\sigma(1)} \otimes v_{\sigma(2)} \otimes \cdots \otimes v_{\sigma(k)};
\end{equation*}
and these two actions commute:
\begin{equation*}
    g \cdot \Big(\big(v_1 \otimes \cdots \otimes v_k\big) \cdot \sigma\Big)
    = \Big(g \cdot \big(v_1 \otimes \cdots \otimes v_k\big)\Big) \cdot \sigma.
\end{equation*}
Therefore, $V^{\otimes k}$ admits a decomposition into simple $GL_n \times
S_k$-modules, and this decomposition takes the form
\begin{equation*}
    V^{\otimes k} \cong
    \bigoplus_{\lambda \vdash k} W_n^\lambda \otimes S^\lambda,
\end{equation*}
where $W_n^\lambda$ is a simple left $GL_n$-module
and $S^\lambda$ is a simple right $S_k$-module.
Since $\dim W_n^\lambda = \#\SSYT_n(\lambda)$
and $\dim S^\lambda = \#\SYT(\lambda)$,
one immediately recovers Equation~\eqref{nk-decomp}.

The fact that the simple left $GL_n$-modules are also indexed by partitions
$\lambda$ is a special case of Schur--Weyl duality. It was first observed by
Schur \cite{Schur1, Schur2} in his thesis and later promoted by Weyl
\cite{Weyl} in his book on the representation theory of the classical groups.
The idea extends to a much more general result known as the {\it double
commutant theorem} (see, for instance, \cite{GoodWall}). Roughly speaking, it
states: if a vector space is acted on by two algebras of operators whose
actions mutually centralize each other, then the space decomposes into a direct
sum of tensors of two simple modules (for instance, $W_n^\lambda \otimes
S^\lambda$), and the two simple modules determine each other.

\subsection{RSK and the $GL_n \times GL_k$-structure on $\CC[\{x_{ij}\}]$}
\label{ssec:RSK-GLxGL}
Knuth's generalization establishes a correspondence between monomials of degree
$r$ in $n$ sets of $k$ variables $\{ x_{ij} \mid i \in [n], j \in [k] \}$ and
pairs of semistandard tableaux $(P, Q)$ of size $r$ satisfying:
$P$ and $Q$ have the same shape;
$P$ is semistandard with entries in $[n]$;
$Q$ is semistandard with entries in $[k]$.
This is achieved by encoding monomials as
generalized permutations from $[k]$ to $[n]$:
each occurrence of $x_{ij}$ is encoded by
the column $\left(\begin{smallmatrix} j \\ i \end{smallmatrix}\right)$,
for example $x_{12}^3 x_{14} x_{23}^2$ becomes
$\left(\begin{smallmatrix} 2 & 2 & 2 & 3 & 3 & 4 \\ 1 & 1 & 1 & 2 & 2 & 1 \end{smallmatrix}\right)$.

This correspondence reflects a decomposition of the polynomial ring
generated by the variables $\{ x_{ij} \mid i \in [n], j \in [k] \}$
when it is viewed as a $GL_n \times GL_k$-module.
To define the module structure, we first arrange the variables in the form of
an $n \times k$ matrix:
\begin{equation*}
    X =
    \begin{bmatrix}
        x_{11} & x_{12} & \ldots & x_{1k} \\
        x_{21} & x_{22} & \ldots & x_{2k} \\
        \vdots & \vdots & \ddots & \vdots \\
        x_{n1} & x_{n2} & \ldots & x_{nk}
    \end{bmatrix}.
\end{equation*}
The left action of $A \in GL_n$ on the polynomial ring corresponds to multiplying $X$ on
the left by $A$. Explicitly, the variable $x_{ij}$ is replaced with
$(AX)_{ij} = \sum_{l} a_{il} x_{lj}$.
The right action of $B \in GL_k$ corresponds to
multiplying $X$ on the right by $B$ (explicitly, $x_{ij} \mapsto
(XB)_{ij}$). Since these actions correspond to multipying
$X$ on the left and right by matrices, the fact that the two actions commute
is a consequence of the associativity of matrix multiplication.

Consequently, the polynomial ring admits a direct sum decomposition
whose summands are tensors of pairs of simple modules
(see, for instance,~\cite{Howe}):
\begin{equation*}
    \CC[X] \cong
    \bigoplus_{\lambda} W_n^\lambda \otimes \big(W_k^\lambda\big)^\ast,
\end{equation*}
where $\lambda$ runs over all partitions,
$W_n^\lambda$ is the simple left $GL_n$-module indexed by $\lambda$, and
$(W_k^\lambda)^\ast$ is the simple right $GL_k$-module indexed by $\lambda$.
Since the dimension of $W_m^\lambda$ is the number of semistandard
tableaux of shape $\lambda$ with entries in $[m]$,
one sees that the set of monomials in $\{ x_{ij} \mid i \in [n], j \in [k] \}$
is in bijection with the set of pairs $(P, Q)$ of semistandard tableaux of the
same shape with $P$ having entries in $[k]$ and $Q$ having entries in $[n]$.

In particular, for every multiset
$\dcl 1^{\alpha_1}, \ldots, k^{\alpha_k} \dcr$,
the monomials $\prod_{i=1}^n \prod_{j=1}^k x_{ij}^{b_{ij}}$
satisfying $\sum_{i=1}^n b_{ij} = \alpha_j$ for all $j \in [k]$
span a $GL_n$-invariant subspace of $\CC[X]$ whose dimension is equal to
$\prod_{i=1}^{k} \binom{n+\alpha_i-1}{\alpha_i}$. These monomials
are in bijection with the pairs $(P, Q)$ of semistandard tableaux of the
same shape with $P$ having entries in $[n]$ and $Q$ having content
$\dcl 1^{\alpha_1}, \ldots, k^{\alpha_k} \dcr$.
From this we immediately obtain
\begin{equation}
    \label{eq:monomials-as-ssyt-ssyt}
    \prod_{i=1}^{k}
    \binom{n+\alpha_i-1}{\alpha_i}
    = \sum_{\lambda \vdash r} \sum_{S}
    \# \SSYT(\lambda, S) \cdot \#\SSYT\Big(\lambda, \dcl 1^{\alpha_1}, 2^{\alpha_2}, \ldots, k^{\alpha_k} \dcr\Big)~,
\end{equation}
where the inner sum is over multisets $S$ of size $r = \sum_{i=1}^k \alpha_i$ with entries in $[n]$.

\section{Application: a new insertion on generalized permutations} \label{sec:app1}

\begin{center}
    \vskip 2mm
    \framebox{\begin{minipage}{0.81\textwidth}
        Throughout this section, semistandard multiset tableaux are defined
        using \textbf{graded lexicographic order};
        see Sections~\ref{sec:multiset-orders}~and~\ref{sec:multiset-tableaux}
        for details.
    \end{minipage}}
    \vskip 4mm
\end{center}

Section~\ref{section.classical-RSK-rep-theory} illustrated how RSK
parallels the direct-sum decomposition of three distinct vector
spaces. In each setting, the correspondence was facilitated by parameterizing
a basis of the vector space by generalized permutations. More specifically,
permutations, then words, and finally monomials
were encoded as generalized permutations to which the RSK insertion procedure
was applied (\cf~Section~\ref{ssec:RSK-SnxSn}--Section~\ref{ssec:RSK-GLxGL}).
In this section, we begin with an alternative encoding, which produces new
combinatorial and enumerative results that parallel a different decomposition
of the polynomial ring of Section~\ref{ssec:RSK-GLxGL}.

\subsection{The correspondence}
Let $\left(\begin{smallmatrix} a_1 & a_2 & \cdots & a_r \\ b_1 & b_2 & \cdots& b_r \end{smallmatrix}\right)$
be a generalized permutation from $[k]$ to $[n]$.
We transform this into a generalized permutation from \defn{multisets over $[k]$ to $[n]$} as follows.
The columns of the generalized permutation are $\binom{M_i}{i}$, where $M_i
= \dcl a_j \mid b_j = i \dcr$ and $i \in [n]$. The columns are ordered so that
the entries of the top row are weakly-increasing in graded lexicographic order
and $i < i'$ whenever $M_i = M_{i'}$.

This encoding, together with Theorem~\ref{th:oldRSK}, establishes the following result.
\begin{prop}\label{prop:twolinenewRSK}
There is a one-to-one correspondence between generalized permutations
$\left(\begin{smallmatrix} a_1 & a_2 & \cdots & a_n \\ b_1 & b_2 & \cdots& b_n \end{smallmatrix}\right)$,
where $a_1, \ldots, a_n$ are multisets over $[k]$
and $b_1, \ldots, b_n$ are distinct elements of $[n]$,
and pairs $(S, T)$ satisfying:
$S$ and $T$ are tableaux of the same shape;
$S$ is a standard Young tableau of size $n$; and
$T$ is a semistandard multiset tableau of content
$a_1 \uplus \cdots \uplus a_n$.
\end{prop}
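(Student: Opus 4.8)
The plan is to reduce Proposition~\ref{prop:twolinenewRSK} to the already-established Knuth correspondence (Theorem~\ref{th:oldRSK}) by exhibiting the encoding described just before the statement as a bijection between the two kinds of generalized permutations, and then checking that the pair $(P,Q)$ produced by Theorem~\ref{th:oldRSK} is exactly the pair $(S,T)$ claimed. Concretely, I would take the new-style input $\left(\begin{smallmatrix} a_1 & \cdots & a_n \\ b_1 & \cdots & b_n \end{smallmatrix}\right)$, where the $a_i$ are multisets over $[k]$ and the $b_i$ are distinct elements of $[n]$, and regard it directly as a generalized permutation from the alphabet of multisets over $[k]$ (ordered by graded lexicographic order) to the alphabet $[n]$. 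The point is that this is literally an instance of the setup of Theorem~\ref{th:oldRSK} with $A$ the poset of multisets over $[k]$ and $B=[n]$: the top row is weakly increasing in graded lex order by hypothesis, and the tie-breaking condition ``$b_i < b_{i+1}$ whenever $a_i = a_{i+1}$'' is exactly the requirement that the $b$'s increase within equal $a$-blocks.

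First I would verify that the source objects are in bijection with honest generalized permutations over these alphabets. This is essentially definitional, but there is one thing to pin down: the distinctness of the $b_i$ together with the ordering conventions forces a unique normal form for each array, so no two distinct arrays give the same generalized permutation and every such generalized permutation (with distinct bottom entries) arises. The condition that the $b_i$ are \emph{distinct} elements of $[n]$ is what will later force the recording tableau to be \emph{standard} rather than merely semistandard.

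Next I would apply Theorem~\ref{th:oldRSK} verbatim. It produces a pair $(P,Q)$ of the same shape with $P$ semistandard over $B=[n]$ and $Q$ semistandard over $A$ = multisets over $[k]$. I then identify $S := P$ and $T := Q$. The two properties to check are: (i) $S=P$ is a standard Young tableau of size $n$; and (ii) $T=Q$ is a semistandard multiset tableau whose content is $a_1 \uplus \cdots \uplus a_n$. For (i), since the bottom-row entries $b_1,\ldots,b_n$ are the distinct elements of $[n]$ (there are $n$ of them, all in $[n]$, pairwise distinct), they are precisely $1,2,\ldots,n$ each used once, so $P$ is filled with $[n]$ with no repeats; being semistandard with distinct entries makes it a standard Young tableau of size $n$. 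For (ii), $Q$ is by construction semistandard with entries drawn from the multiset alphabet over $[k]$, i.e.\ a semistandard multiset tableau in the sense of Section~\ref{sec:multiset-tableaux}, and its entries are exactly the top-row multisets $a_1,\ldots,a_n$, so its content is $\biguplus_i a_i = a_1 \uplus \cdots \uplus a_n$.

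The main thing to get right—and the only genuine obstacle—is that the notion of ``semistandard tableau with entries in the ordered set $A$'' used in Theorem~\ref{th:oldRSK} must coincide with the notion of ``semistandard multiset tableau'' defined in Section~\ref{sec:multiset-tableaux} when $A$ is the graded-lex-ordered set of multisets over $[k]$. This is where the boxed convention at the start of Section~\ref{sec:app1} does the work: both definitions impose weak increase along rows ($T(i,j) \leqslant T(i,j+1)$) and strict increase down columns ($T(i,j) < T(i+1,j)$) with respect to the \emph{same} total order, so the two notions are identical, and the insertion procedure of Definition~\ref{defn:rsk} is exactly row-insertion in this alphabet. Once this identification is made explicit, the proposition follows immediately from Theorem~\ref{th:oldRSK} with no further computation. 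I would close by remarking that the inverse map is just the inverse RSK correspondence read back through the same encoding, so bijectivity is automatic.
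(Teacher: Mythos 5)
Your proposal is correct and follows essentially the same route as the paper: the paper likewise treats the two-line array as a generalized permutation from the graded-lex-ordered alphabet of multisets over $[k]$ to $[n]$ and invokes Theorem~\ref{th:oldRSK} directly, with the distinctness of the $b_i$ forcing the insertion tableau to be standard. Your write-up merely makes explicit the details (normal form of the array, identification of the two semistandardness notions, content of $Q$) that the paper leaves implicit.
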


\begin{example}
    Consider the generalized permutation from $[6]$ to $[5]$,
    \[
        \left(\!\!\begin{array}{ccccccccccccc}
        1&1&1&2&2&3&3&3&3&4&6&6&6\\
        1&5&5&2&3&1&3&5&5&1&1&2&3
        \end{array}\!\!\right)~,
    \]
    with which we associate the following generalized permutation whose top row
    consists of (possibly empty) multisets over $[6]$ and whose bottom row
    are the elements $\{1, 2, 3, 4, 5\}$, each appearing exactly
    once:
    \[
        \left(\!\!\begin{array}{ccccc}
        \dcl\dcr&\dcl2,6\dcr&\dcl2,3,6\dcr&\dcl1,1,3,3\dcr&\dcl1,3,4,6\dcr\cr
        4&2&3&5&1
        \end{array}\!\!\right)~.
    \]
    This generalized permutation in turn corresponds to the following pair of
    tableaux:
    \squaresize=17pt
    $$\left(\!\!\raisebox{-20pt}{\young{4\cr2\cr1&3&5\cr}~, \young{\hbox{\tiny 1346}\cr\hbox{\tiny 26}\cr&\hbox{\tiny 236}&\hbox{\tiny 1133}\cr}}\right)~.$$
\end{example}

\begin{example}
Consider the case where $n=3$, $k=2$ and the
generalized permutations are of the form
$\left(\begin{smallmatrix} 1 & 1 & 2 \\ a & b & c \end{smallmatrix}\right)$.
The pairs of tableaux under the correspondence of
Proposition~\ref{prop:twolinenewRSK} are depicted in
Figure~\ref{fig:twolinenewRSK-example},
whereas the pairs of tableaux under the
usual RSK correspondence are depicted in
Figure~\ref{fig:twolineoldRSK-example}.

\begin{figure}[ht!]
\squaresize=13pt
\def\genperm#1#2{$\left(\begin{matrix} 1 & 1 & 2 \\ #1 & #2 & c \end{matrix}\right)$}
\def\myyoung#1{\begingroup\setbox0=\hbox{\young{#1}}\parbox{\wd0}{\box0}\endgroup}
\begin{tabular}{c|r@{\hskip 0.5\tabcolsep}lcr@{\hskip 0.5\tabcolsep}lcr@{\hskip 0.5\tabcolsep}l}
                    & \multicolumn{2}{c}{$c=1$} && \multicolumn{2}{c}{$c=2$} && \multicolumn{2}{c}{$c=3$} \\ \toprule
    \genperm{1}{1}  & $\myyoung{2\cr1&3\cr}$      & $\myyoung{\hbox{\tiny 112}\cr&\cr}$  && $\myyoung{3\cr2\cr1\cr}$   & $\myyoung{11\cr2\cr\cr}$            && $\myyoung{2\cr1&3\cr}$  & $\myyoung{11\cr&2\cr}$\\[3ex]
    \genperm{1}{2}  & $\myyoung{3\cr2\cr1\cr}$    & $\myyoung{12\cr1\cr\cr}$             && $\myyoung{3\cr1&2\cr}$     & $\myyoung{1\cr&12\cr}$              && $\myyoung{1&2&3\cr}$    & $\myyoung{1&1&2\cr}$\\[4ex]
    \genperm{1}{3}  & $\myyoung{2\cr1&3\cr}$      & $\myyoung{12\cr&1\cr}$               && $\myyoung{3\cr1&2\cr}$     & $\myyoung{2\cr1&1\cr}$              && $\myyoung{2\cr1&3\cr}$  & $\myyoung{1\cr&12\cr}$\\[4ex]
    \genperm{2}{2}  & $\myyoung{3\cr1&2\cr}$      & $\myyoung{2\cr&11\cr}$               && $\myyoung{3\cr1&2\cr}$     & $\myyoung{\hbox{\tiny 112}\cr&\cr}$ && $\myyoung{3\cr1&2\cr}$  & $\myyoung{11\cr&2\cr}$\\[4ex]
    \genperm{2}{3}  & $\myyoung{2\cr1&3\cr}$      & $\myyoung{2\cr1&1\cr}$               && $\myyoung{3\cr1&2\cr}$     & $\myyoung{12\cr&1\cr}$              && $\myyoung{1&2&3\cr}$    & $\myyoung{&1&12\cr}$\\[4ex]
    \genperm{3}{3}  & $\myyoung{2\cr1&3\cr}$      & $\myyoung{2\cr&11\cr}$               && $\myyoung{1&2&3\cr}$       & $\myyoung{&2&11\cr}$                && $\myyoung{1&2&3\cr}$    & $\myyoung{&&\hbox{\tiny 112}\cr}$
\end{tabular}
\caption{The associated pair of tableaux by the correspondence of Proposition~\ref{prop:twolinenewRSK}.}
    \label{fig:twolinenewRSK-example}
\end{figure}

\begin{figure}[ht!]
\squaresize=13pt
\def\genperm#1#2{$\left(\begin{matrix} 1 & 1 & 2 \\ #1 & #2 & c \end{matrix}\right)$}
\def\myyoung#1{\begingroup\setbox0=\hbox{\young{#1}}\parbox{\wd0}{\box0}\endgroup}
\begin{tabular}{c|r@{\hskip 0.5\tabcolsep}lcr@{\hskip 0.5\tabcolsep}lcr@{\hskip 0.5\tabcolsep}l}
    & \multicolumn{2}{c}{$c=1$} && \multicolumn{2}{c}{$c=2$} && \multicolumn{2}{c}{$c=3$} \\ \toprule
    \genperm{1}{1}  & $\myyoung{1&1&1\cr}$ & $\myyoung{1&1&2\cr}$&& $\myyoung{1&1&2\cr}$ & $\myyoung{1&1&2\cr}$&& $\myyoung{1&1&3\cr}$ & $\myyoung{1&1&2\cr}$\\[3ex]
    \genperm{1}{2}  & $\myyoung{2\cr1&1\cr}$ & $\myyoung{2\cr1&1\cr}$&& $\myyoung{1&2&2\cr}$ & $\myyoung{1&1&2\cr}$&& $\myyoung{1&2&3\cr}$ & $\myyoung{1&1&2\cr}$\\[3ex]
    \genperm{1}{3}  & $\myyoung{3\cr1&1\cr}$ & $\myyoung{2\cr1&1\cr}$&& $\myyoung{3\cr1&2\cr}$ & $\myyoung{2\cr1&1\cr}$&& $\myyoung{1&3&3\cr}$ & $\myyoung{1&1&2\cr}$\\[3ex]
    \genperm{2}{2}  & $\myyoung{2\cr1&2\cr}$ & $\myyoung{2\cr1&1\cr}$&& $\myyoung{2&2&2\cr}$ & $\myyoung{1&1&2\cr}$&& $\myyoung{2&2&3\cr}$ & $\myyoung{1&1&2\cr}$\\[3ex]
    \genperm{2}{3}  & $\myyoung{2\cr1&3\cr}$ & $\myyoung{2\cr1&1\cr}$&& $\myyoung{3\cr2&2\cr}$ & $\myyoung{2\cr1&1\cr}$&& $\myyoung{2&3&3\cr}$ & $\myyoung{1&1&2\cr}$\\[3ex]
    \genperm{3}{3}  & $\myyoung{3\cr1&3\cr}$ & $\myyoung{2\cr1&1\cr}$&& $\myyoung{3\cr2&3\cr}$ & $\myyoung{2\cr1&1\cr}$&& $\myyoung{3&3&3\cr}$ & $\myyoung{1&1&2\cr}$
\end{tabular}
\caption{The associated pair of tableaux by the usual RSK correspondence.}
    \label{fig:twolineoldRSK-example}
\end{figure}
\end{example}

\subsection{Special case: insertion on words}
In the special case where the top row of
$\left(\begin{smallmatrix} a_1 & a_2 & \cdots & a_r \\ b_1 & b_2 & \cdots& b_r \end{smallmatrix}\right)$
satisfies $a_j = j$ for all $j$, we obtain the following correspondence.

\begin{cor} \label{prop:enumnpk}
    There is a one-to-one correspondence between words of length $k$ with
    entries in $[n]$ and pairs $(S, T)$ satisfying:
    $S$ and $T$ are tableaux of the same shape;
    $S$ is a standard Young tableau of size $n$;
    $T$ is a standard multiset tableau of content $[k]$.
\end{cor}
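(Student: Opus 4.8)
The plan is to deduce this corollary from Proposition~\ref{prop:twolinenewRSK} by specializing to generalized permutations from $[k]$ to $[n]$ whose top row is $1, 2, \ldots, k$, and then checking that this specialization forces the tableau $T$ to be \emph{standard}. First I would encode a word $w = w_1 w_2 \cdots w_k$ with entries in $[n]$ as the generalized permutation $\left(\begin{smallmatrix} 1 & 2 & \cdots & k \\ w_1 & w_2 & \cdots & w_k \end{smallmatrix}\right)$ from $[k]$ to $[n]$. Since the top entries $1, 2, \ldots, k$ are pairwise distinct, the compatibility condition ``$b_i \leqslant_B b_{i+1}$ whenever $a_i = a_{i+1}$'' in the definition of a generalized permutation is vacuous; hence every word gives a valid generalized permutation, and conversely every generalized permutation from $[k]$ to $[n]$ with top row $1, 2, \ldots, k$ is recovered from its bottom row, which is an arbitrary word. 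This is a bijection between words of length $k$ with entries in $[n]$ and generalized permutations from $[k]$ to $[n]$ with top row $1, 2, \ldots, k$.

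Next I would follow such a generalized permutation through the encoding that precedes Proposition~\ref{prop:twolinenewRSK}. For each $i \in [n]$, set $M_i = \dcl a_j : b_j = i \dcr = \dcl j : w_j = i \dcr$, the collection of positions at which the letter $i$ occurs in $w$. Because the positions $1, \ldots, k$ are distinct and each belongs to exactly one $M_i$, every $M_i$ is in fact a set, the sets $M_1, \ldots, M_n$ are pairwise disjoint, and $\biguplus_{i=1}^{n} M_i = [k]$. Thus $w$ is sent to a generalized permutation whose top row consists of the multisets $M_1, \ldots, M_n$ over $[k]$, whose bottom row lists the distinct elements of $[n]$, and whose associated content $a_1 \uplus \cdots \uplus a_n$ equals the set $[k]$.

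Finally I would apply Proposition~\ref{prop:twolinenewRSK}, which assigns to this generalized permutation a pair $(S, T)$ of the same shape, with $S$ a standard Young tableau of size $n$ and $T$ a semistandard multiset tableau of content $a_1 \uplus \cdots \uplus a_n = [k]$. By the definition in Section~\ref{sec:content}, a semistandard multiset tableau whose content is the set $[k]$ is precisely a standard multiset tableau of content $[k]$; hence $T$ is standard. Composing the three bijections yields the desired correspondence. The one step deserving genuine care---and the crux of the argument---is establishing that specializing the top row to $1, 2, \ldots, k$ forces $\biguplus_i M_i$ to be the \emph{set} $[k]$ and not merely a multiset of size $k$; granting this, the statement is just the restriction of the bijection of Proposition~\ref{prop:twolinenewRSK} to the matching subsets on each side.
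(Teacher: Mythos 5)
Your proposal is correct and follows the same route the paper takes: the paper obtains Corollary~\ref{prop:enumnpk} precisely by specializing Proposition~\ref{prop:twolinenewRSK} to generalized permutations with top row $1,2,\ldots,k$, in which case the multisets $M_i$ are disjoint sets partitioning $[k]$, so the semistandard multiset tableau $T$ has content equal to the set $[k]$ and is therefore standard. Your write-up just makes explicit the bookkeeping that the paper leaves implicit.
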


\begin{example}
    The generalized permutation associated with the word
    $155231315$ over the alphabet $[6]$ is
    \[
        \left( \begin{array}{cccccc} \{ \}& \{ \} & \{4\} & \{5,7\} &\{1,6,8\} &\{2,3,9\} \\
        4 & 6 & 2 & 3 & 1&5\end{array} \right),
    \]
    which corresponds to the pair
    \begin{equation*}
    \squaresize=17pt
    \left(\!\!\raisebox{-20pt}{\young{4\cr2&6\cr1&3&5\cr}~,
    \young{\hbox{\tiny 168}\cr\hbox{\tiny 4}&\hbox{\tiny 57}\cr\hbox{}&\hbox{}&\hbox{\tiny 239}\cr}}\right)~.
    \end{equation*}
\end{example}

\begin{example}
Consider the (relatively small) example where $n=4$ and $k=2$ so that there
are $16$ words of length $2$ with entries in $\{1,2,3,4\}$.
Figure~\ref{fig:oldRSK-ab-example} depicts the
pairs of tableaux associated with these words by the usual RSK algorithm;
and
Figure~\ref{fig:newRSK-ab-example} depicts the
pairs of tableaux associated with these words by the correspondence
of Corollary~\ref{prop:enumnpk}.
\begin{figure}[ht!]
\squaresize=13pt
\def\genperm#1#2{$\left(\begin{matrix} 1 & 1 & 2 \\ #1 & #2 & c \end{matrix}\right)$}
\def\myyoung#1{\begingroup\setbox0=\hbox{\young{#1}}\parbox{\wd0}{\box0}\endgroup}
\begin{tabular}{c| r@{\hskip 0.5\tabcolsep}lc r@{\hskip 0.5\tabcolsep}lc r@{\hskip 0.5\tabcolsep}lc r@{\hskip 0.5\tabcolsep}l}
                    & \multicolumn{2}{c}{$b=1$} && \multicolumn{2}{c}{$b=2$} && \multicolumn{2}{c}{$b=3$} && \multicolumn{2}{c}{$b=4$} \\ \toprule
    $a = 1$ &
    $\myyoung{1&1\cr}$ & $\myyoung{1&2\cr}$&&
    $\myyoung{1&2\cr}$ & $\myyoung{1&2\cr}$&&
    $\myyoung{1&3\cr}$ & $\myyoung{1&2\cr}$&&
    $\myyoung{1&4\cr}$ & $\myyoung{1&2\cr}$\\[5pt]
    $a = 2$            &
    $\myyoung{2\cr1\cr}$ & $\myyoung{2\cr1\cr}$&&
    $\myyoung{2&2\cr}$ & $\myyoung{1&2\cr}$&&
    $\myyoung{2&3\cr}$ & $\myyoung{1&2\cr}$&&
    $\myyoung{2&4\cr}$ & $\myyoung{1&2\cr}$\\[10pt]
    $a = 3$            &
    $\myyoung{3\cr1\cr}$ & $\myyoung{2\cr1\cr}$&&
    $\myyoung{3\cr2\cr}$ & $\myyoung{2\cr1\cr}$&&
    $\myyoung{3&3\cr}$ & $\myyoung{1&2\cr}$&&
    $\myyoung{3&4\cr}$ & $\myyoung{1&2\cr}$\\[10pt]
    $a = 4$            &
    $\myyoung{4\cr1\cr}$ & $\myyoung{2\cr1\cr}$&&
    $\myyoung{4\cr2\cr}$ & $\myyoung{2\cr1\cr}$&&
    $\myyoung{4\cr1\cr}$ & $\myyoung{2\cr1\cr}$&&
    $\myyoung{4&4\cr}$ & $\myyoung{1&2\cr}$
\end{tabular}
\caption{The pair of tableaux associated with the word $ab$ by the usual RSK correspondence.}
\label{fig:oldRSK-ab-example}
\end{figure}
\begin{figure}[ht!]
\squaresize=10pt
\def\genperm#1#2{$\left(\begin{matrix} 1 & 1 & 2 \\ #1 & #2 & c \end{matrix}\right)$}
\def\myyoung#1{\begingroup\setbox0=\hbox{\small\young{#1}}\parbox{\wd0}{\box0}\endgroup}
\begin{tabular}{c| r@{\hskip 0.5\tabcolsep}lc r@{\hskip 0.5\tabcolsep}lc r@{\hskip 0.5\tabcolsep}lc r@{\hskip 0.5\tabcolsep}l}
            & \multicolumn{2}{c}{$b=1$} && \multicolumn{2}{c}{$b=2$} && \multicolumn{2}{c}{$b=3$} && \multicolumn{2}{c}{$b=4$} \\ \toprule
    $a = 1$ & $\myyoung{2\cr1&3&4\cr}$ & $\myyoung{12\cr&&\cr}$&& $\myyoung{3&4\cr1&2\cr}$ & $\myyoung{1&2\cr&\cr}$&& $\myyoung{2&4\cr1&3\cr}$ & $\myyoung{1&2\cr&\cr}$&& $\myyoung{2\cr1&3&4\cr}$ & $\myyoung{1\cr&&2\cr}$\\[10pt]
    $a = 2$ & $\myyoung{3\cr2\cr1&4\cr}$ & $\myyoung{2\cr1\cr&\cr}$&& $\myyoung{3\cr1&2&4\cr}$ & $\myyoung{12\cr&&\cr}$&& $\myyoung{4\cr1&2&3\cr}$ & $\myyoung{1\cr&&2\cr}$&& $\myyoung{3\cr1&2&4\cr}$ & $\myyoung{1\cr&&2\cr}$\\[15pt]
    $a = 3$ & $\myyoung{4\cr2\cr1&3\cr}$ & $\myyoung{2\cr1\cr&\cr}$&& $\myyoung{4\cr3\cr1&2\cr}$ & $\myyoung{2\cr1\cr&\cr}$&& $\myyoung{4\cr1&2&3\cr}$ & $\myyoung{12\cr&&\cr}$&& $\myyoung{1&2&3&4\cr}$ & $\myyoung{&&1&2\cr}$\\[15pt]
    $a = 4$ & $\myyoung{2\cr1&3&4\cr}$ & $\myyoung{2\cr&&1\cr}$&& $\myyoung{3\cr1&2&4\cr}$ & $\myyoung{2\cr&&1\cr}$&& $\myyoung{4\cr1&2&3\cr}$ & $\myyoung{2\cr&&1\cr}$&& $\myyoung{1&2&3&4\cr}$ & $\myyoung{&&&12\cr}$
\end{tabular}
\caption{The pair of tableaux associated with the word $ab$ by the correspondence of Corollary~\ref{prop:enumnpk}.}
\label{fig:newRSK-ab-example}
\end{figure}
\end{example}

\subsection{Enumerative results}

By restricting the correspondence of Proposition~\ref{prop:twolinenewRSK}
to generalized permutations
$\left(\begin{smallmatrix} a_1 & a_2 & \cdots & a_n \\ b_1 & b_2 & \cdots& b_n \end{smallmatrix}\right)$
satisfying $a_1 \uplus \cdots \uplus a_n = \dcl 1^{\alpha_1}, 2^{\alpha_2}, \ldots, k^{\alpha_k} \dcr$,
we obtain the following enumerative statement:
\begin{equation*}
    \prod_{i=1}^{k}
    \binom{n+\alpha_i-1}{\alpha_i}
    = \sum_{\lambda \vdash n} \#\SYT(\lambda) \cdot
    \# \SSMT\Big(\lambda, \dcl 1^{\alpha_1}, 2^{\alpha_2}, \ldots, k^{\alpha_k} \dcr\Big)~.
\end{equation*}
Compare this with Equation~\eqref{eq:monomials-as-ssyt-ssyt},
which is the enumerative statement that accompanies Theorem~\ref{th:oldRSK}.

The analogous enumerative statement that accompanies
Corollary~\ref{prop:enumnpk} is the special case where
$\alpha_i = 1$ for all $i$:
\begin{equation}
    \label{eq:n^k-as-syt-ssmt}
    n^k = \sum_{\lambda \vdash n} \# \SYT(\lambda) \cdot
    \#\SMT(\lambda,k)~.
\end{equation}
Compare this with the enumerative statement obtained from the usual application
of the RSK correspondence to words (see Section~\ref{ssec:RSK-GLxSk} and
Equation~\eqref{nk-decomp}):
\begin{equation}
    \label{eq:n^k-as-ssyt-syt}
    n^k = \sum_{\mu \vdash k} \sum_{C} \#\SSYT(\mu, C) \cdot
    \#\SYT(\mu),
\end{equation}
where the inner sum is over
all multisets $C$ of size $k$ with entries in $\{1,2,\ldots,n\}$.

\subsection{Connections with representation theory}
\label{ssec:reptheoryconnections}

Consider the vector space $V^{\otimes k}$, where $V = \CC^n$.
This space admits an action of the symmetric group $S_n$
as well as an action of the partition algebra $P_k(n)$.
The $S_n$-action is obtained by identifying the symmetric group $S_n$ with the
subgroup of $GL_n$ consisting of the permutation matrices and restricting the
$GL_n$-action on $V^{\otimes k}$ defined in Section~\ref{ssec:RSK-GLxSk}.
The precise definition of the $P_k(n)$-action is not necessary here,
so we refer the interested reader to~\cite[Eq. (1.3.4)]{Halverson}.
More information on the partition algebra $P_k(n)$ is presented in
Section~\ref{sec:diagramalgebras}.

These two actions are closely related when $n \geqslant 2k$:
the algebra of linear endomorphisms of $V^{\otimes k}$ that commute with the
$S_n$-action is isomorphic to $P_k(n)$; and conversely, the algebra of linear
endomorphisms of $V^{\otimes k}$ that
commute with the $P_k(n)$-action is isomorphic to $\CC S_n$ \cite{Jones}.
That is, provided that $n \geqslant 2k$,
\begin{equation*}
    \End_{S_n}\big(V^{\otimes k}\big) \cong P_k(n)
    \qquad\text{and}\qquad
    \End_{P_k(n)}\big(V^{\otimes k}\big) \cong \CC S_n.
\end{equation*}
Consequently, $V^{\otimes k}$ admits a direct sum decomposition
whose summands are tensors of a simple $S_n$-module
and a simple $P_k(n)$-module
(see, for instance,~\cite[Theorem 3.2.2]{HalRam} or \cite[Theorem~8.3.18]{CST}):
\begin{equation*}
    V^{\otimes k}
    \cong
    \bigoplus_{\substack{\lambda \vdash n\\|\o\lambda|\leqslant k}} S^\lambda \otimes V_{P_k(n)}^{\o\lambda},
\end{equation*}
where $S^\lambda$ is the simple $S_n$-module indexed by $\lambda$
and $V_{P_k(n)}^{\o\lambda}$ is the simple $P_k(n)$-module indexed by $\o\lambda$
(recall from Section~\ref{section.partitions} that $\o\lambda$ is obtained from $\lambda$ by deleting the first row).
Since the dimension of $S^\lambda$ is the number of standard tableaux of shape $\lambda$
and the dimension of $V_{P_k(n)}^{\o\lambda}$ is the number of standard
multiset tableaux of shape $\lambda$ and content~$[k]$, we
immediately obtain the enumerative result in Equation~\eqref{eq:n^k-as-syt-ssmt}.

As pointed out in the introduction, this combinatorial interpretation for
the dimension is different from that of~\cite{HalLew, MarRol}, which makes use
of vacillating tableaux instead of multiset tableaux. It is more closely
aligned with the results in~\cite{BH,BHH,OZ,HJ}.

\section{Application: Diagram Algebras}
\label{sec:diagramalgebras}

\begin{center}
    \vskip 2mm
    \framebox{\begin{minipage}{0.75\textwidth}
        Throughout this section, standard multiset tableaux are defined
        using \textbf{last letter order};
        see Sections~\ref{sec:multiset-orders}~and~\ref{sec:multiset-tableaux}
        for details.
    \end{minipage}}
    \vskip 4mm
\end{center}

For any parameter $n$ and positive integer $k$,  the partition algebra, $P_k(n)$, is defined as the complex vector space 
with basis given by the set partitions on  two disjoint sets  
$[k] \cup [\overline{k}]=\{1, 2, \ldots, k\} \cup \{\o1,\o2, \ldots, \overline{k}\}$:
\[
    P_k(n) =\mbox{span}_{\mathbb{C}} \{\pi \mid \pi \vdash [k]\cup[\o{k}]\}.
\]
Although we do not define the product here, as we will not use it explicitly,
we remark that the dependency of the algebra on $n$
arises when we multiply the set partitions \cite{HalRam}.

A \defn{diagram} is a graphical representation of a set partition of the set
$[k] \cup [\overline{k}]$: the vertex set of the graph is $[k] \cup [\overline{k}]$  arranged in two horizontal 
rows, where the top row is labelled by $1,2, \ldots, k$ and the bottom row are labelled 
by $\o1, \o2, \ldots, \overline{k}$; and there is a path connecting two vertices if and only if
they belong to the same block of the set partition. Note that there is more than one graph that
represents a set partition, but this is immaterial to the following.
In our examples, we will connect vertices in the same block with
a cycle.

\begin{example}
    \label{ex:diagram}
The set partition
$\pi = \{\{ 1,2,4,\o2,\o5\},\{3\}, \{5,6,7, \o3,\o4,\o6,\o7\},\{8,\o8\}, \{\o1\}\}$ 
is represented by the following diagram:
\begin{center}$\pi = $
\begin{tikzpicture}[scale = 0.5,thick, baseline={(0,-1ex/2)}]
\tikzstyle{vertex} = [shape = circle, minimum size = 7pt, inner sep = 1pt]
\node[vertex] (G--5) at (6.0, -1) [shape = circle, draw] {\tiny $\overline 5$};
\node[vertex] (G--4) at (4.5, -1) [shape = circle, draw] {\tiny $\overline 4$};
\node[vertex] (G--3) at (3.0, -1) [shape = circle, draw] {\tiny $\overline 3$};
\node[vertex] (G-5) at (6.0, 1) [shape = circle, draw] {\tiny 5};
\node[vertex] (G-4) at (4.5, 1) [shape = circle, draw] {\tiny 4};
\node[vertex] (G--2) at (1.5, -1) [shape = circle, draw] {\tiny $\overline 2$};
\node[vertex] (G--1) at (0.0, -1) [shape = circle, draw] {\tiny $\overline 1$};
\node[vertex] (G-1) at (0.0, 1) [shape = circle, draw] {\tiny 1};
\node[vertex] (G-2) at (1.5, 1) [shape = circle, draw] {\tiny 2};
\node[vertex] (G-3) at (3.0, 1) [shape = circle, draw] {\tiny 3};
\node[vertex] (G-6) at (7.5, 1) [shape = circle, draw] {\tiny 6};
\node[vertex] (G--6) at (7.5, -1) [shape = circle, draw] {\tiny $\overline 6$};
\node[vertex] (G-7) at (9.0, 1) [shape = circle, draw] {\tiny 7};
\node[vertex] (G--7) at (9.0, -1) [shape = circle, draw] {\tiny $\overline 7$};
\node[vertex] (G-8) at (10.5, 1) [shape = circle, draw] {\tiny 8};
\node[vertex] (G--8) at (10.5, -1) [shape = circle, draw] {\tiny $\overline 8$};
\draw (G-1) .. controls +(0.5, -0.5) and +(-0.5, -0.5) .. (G-2);
\draw (G-2) .. controls +(0.7, -0.7) and +(-0.7, -0.7) .. (G-4);
\draw (G--5) .. controls +(-1, 1) and +(0.7, 0.7) .. (G--2);
\draw (G-1) .. controls +(0.3, -0.5) and +(-.5, .5) .. (G--2);
\draw (G-4) .. controls +(0.3, -0.5) and +(-.5, .5) .. (G--5);
\draw (G-5) .. controls +(0.5, -0.5) and +(-0.5, -0.5) .. (G-6);
\draw (G-6) .. controls +(0.5, -0.5) and +(-0.5, -0.5) .. (G-7);
\draw (G--3) .. controls +(0.4, 0.4) and +(-0.4, 0.4) .. (G--4);
\draw (G--4) .. controls +(0.7, 0.7) and +(-0.7, 0.7) .. (G--6);
\draw (G-5) .. controls +(0.5, -0.5) and +(0.5, 0.5) .. (G--3);
\draw (G--6) .. controls +(0.7, 0.7) and +(-0.7, 0.7) .. (G--7);
\draw (G-7)--(G--7);
\draw (G-8)--(G--8);
\end{tikzpicture}
\end{center}
\vskip 2mm
\end{example}

The partition algebra $P_k(n)$ is semisimple whenever the parameter $n\notin \{0, 1, \ldots, 2k-2\}$ in which
case the irreducible representations are indexed by partitions $\lambda$ with $0\leqslant |\lambda| \leqslant k$ \cite{MS}.
We assume throughout that $n\geqslant 2k$, so that $P_k(n)$ is semisimple and
isomorphic to $\End_{S_n}(V^{\otimes k})$ as explained in Section~\ref{ssec:reptheoryconnections}.

In \cite{HalLew,MarRol}, the authors introduce RSK-type algorithms
between partition algebra diagrams and pairs of
paths in the Bratteli diagram of the partition algebras;
in \cite{HalLew} these paths are called vacillating tableaux.
In \cite{BH}, the authors define a bijection between vacillating tableaux and standard multiset tableaux.
In this section we provide a different bijection from partition
algebra diagrams to standard multiset tableaux.  This algorithm not
only encodes the representation theory of the partition algebra, in the sense that the tableaux of 
shape $\lambda$  index an irreducible representation associated with $\lambda$,
but it also encodes the representation theory of subalgebras of the partition
algebra when we restrict the set of diagrams considered.
This allows us to obtain enumerative results for representations of various diagram algebras
using standard multiset tableaux.

In this section, we only consider centralizer algebras acting on $V^{\otimes k}$, but this construction
indicates that more general diagram algebras are also of interest.
If instead one considers the centralizer algebras acting on the polynomial ring $\CC[X]$
(as described in Section \ref{ssec:RSK-GLxGL}), then the corresponding diagrams would
have repeated entries and the dimensions of the irreducible representations will be
subsets of semistandard multiset tableaux.  Currently little is known about these centralizer
algebras, see for instance \cite{NPS, OZ2}.

\subsection{The correspondence}
\label{ssec:RSK-for-diagrams}

A block in a set partition $\pi$ is called \defn{propagating} if it contains vertices in both $[k]$ and $[\overline{k}]$.
For example, $\{1, 2, 4, \o2, \o5\}$ is a propagating block.  A block is called \defn{non-propagating} otherwise.
The number of propagating blocks in $\pi$ is called the \defn{propagating number}.
We denote the propagating number
by $\operatorname{pr}(\pi)$. For example,  the set partition $\pi$ in Example~\ref{ex:diagram} has 
$\operatorname{pr}(\pi)=3$.

Let $\pi = \{\pi_1, \pi_2, \ldots, \pi_r\}$ be a set partition of $[k]\cup [\o{k}]$.
We associate with $\pi$ a pair $(T, S)$ of standard multiset tableaux as follows.
To begin,
\begin{itemize}
    \item
        let $\pi_{j_1},\pi_{j_2}, \ldots, \pi_{j_p}$ denote the propagating
        blocks of $\pi$ ordered so that $\pi_{j_1}^+ < \cdots < \pi_{j_p}^+$ in
        the last letter order, where $\pi_{j}^+ = \pi_{j} \cap [k]$;

    \item
        let $\sigma_{i_1}, \ldots, \sigma_{i_a} \subseteq [k]$ denote
        the non-propagating blocks contained in $[k]$ and ordered so that
        $\sigma_{i_1} < \cdots < \sigma_{i_a}$ in the last letter order;

    \item
        let $\tau_{i_1}, \ldots, \tau_{i_b} \subseteq [\o{k}]$ denote
        the non-propagating blocks contained in $[\o{k}]$ and ordered so that
        $\tau_{i_1} < \cdots < \tau_{i_b}$ in the last letter order.
\end{itemize}
Let $(P, Q)$ denote the pair of standard multiset tableaux obtained by
applying the RSK algorithm to the generalized permutation
\begin{equation*}
    \left(
        \begin{array}{cccc}
            \pi_{j_1}^+ &
            \pi_{j_2}^+ &
            \cdots &
            \pi_{j_p}^+
            \\[4pt]
            \pi_{j_1}^- &
            \pi_{j_2}^- &
            \cdots &
            \pi_{j_p}^-
        \end{array}
    \right),
\end{equation*}
where $\pi_{j}^+ = \pi_j \cap [k]$ and $\pi_{j}^- = \pi_j \cap [\o{k}]$.
Let $T$ be the tableau obtained from $P$ by adjoining a row containing $n-p-b$
empty cells followed by cells labelled $\tau_{i_1}, \ldots, \tau_{i_b}$.
Let $S$ be the tableau obtained from $Q$ by adjoining a row containing $n-p-a$
empty cells followed by cells labelled $\sigma_{i_1}, \ldots, \sigma_{i_a}$.

\begin{example}
Let $\pi = \{\{ 2,3,4,\o4,\o5\},\{5,\o2,\o3\},\{1,6,\o7,\o8\},\{7,8\},\{9,\o6\},\{\o1\},\{\o9\}\}\in P_9(18)$.
The non-propagating blocks are $\{\o1\}$, $\{\o9\}$ and $\{7,8\}$, and
the generalized permutation constructed from the propagating blocks is
\begin{equation*}
    \left( \begin{array}{cccc}  \{2,3,4\} & \{5\} & \{1,6\} & \{9\} \\[2pt]
\{\o4,\o5\} & \{\o2,\o3\} & \{\o7,\o8\} & \{\o6\}\end{array}\right).
\end{equation*}
Apply the RSK algorithm to obtain the following pair of multiset tableaux:
\begin{equation*}
    \squaresize=14pt
    P =
    \begin{array}{c}
        \scriptsize
        \young{\o4\o5 & \o7\o8\cr \o2 \o3& \o6 \cr}
    \end{array},
    \qquad
    Q =
    \begin{array}{c}
        \scriptsize
        \young{5& 9\cr 234 &16\cr}
    \end{array}.
\end{equation*}
Finally, adjoin a new row to $P$ and a new row to $Q$ containing the non-propagating
blocks so that the resulting tableaux are of size $n = 18$:
\begin{equation*}
    \scriptsize
    \squaresize=14pt
    \young{ \o4\o5 & \o7\o8 \cr \o2 \o3& \o6 \cr & & & & &&&&&&& & \o1& \o9 \cr}\,,
    ~
    \young{5& 9\cr 234 &16\cr  & & & & & &&&&&&& & 78 \cr }\,.
\end{equation*}
\end{example}

\begin{theorem} \label{thm:settableaux}
Let $n\geqslant 2k$. The set partitions of $[k]\cup[\o{k}]$ are in bijection
with pairs $(T,S)$ of standard multiset tableaux satisfying: $T$ and $S$ are of
the same shape $\lambda$, where $\lambda$ is a partition of $n$; $T$ has
content $[\o{k}]$; and $S$ has content $[k]$.
\end{theorem}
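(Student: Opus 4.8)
The plan is to show that the map $\pi \mapsto (T,S)$ defined above is a bijection by constructing an explicit inverse and verifying that both compositions are the identity. The main structural observation is that the construction decomposes $\pi$ into three independent pieces of data: the propagating blocks (which carry a matching between a multiset partition of a subset of $[k]$ and a multiset partition of a subset of $[\o k]$), the non-propagating blocks inside $[k]$, and the non-propagating blocks inside $[\o k]$. Since each block is a set (not a general multiset) and the blocks of a set partition are pairwise disjoint, the entries appearing in the propagating data and the non-propagating data together account for each of $1,\ldots,k$ exactly once and each of $\o 1,\ldots,\o k$ exactly once; this is precisely the content condition in the statement.

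First I would check that the output is well-formed: that $(P,Q)$ produced by RSK is a pair of semistandard multiset tableaux of the same shape with content equal to $\{\pi_{j_1}^+,\ldots,\pi_{j_p}^+\}$ and $\{\pi_{j_1}^-,\ldots,\pi_{j_p}^-\}$ respectively, using Theorem~\ref{th:oldRSK} applied to the alphabet of disjoint sets ordered by last letter order. Here I must confirm that the two-line array is a genuine generalized permutation: the top row $\pi_{j_1}^+ < \cdots < \pi_{j_p}^+$ is strictly increasing by construction, so the tie-breaking condition on the bottom row is vacuous. Next I would verify that adjoining the bottom row of empty cells followed by the non-propagating blocks $\tau_{i_1} < \cdots < \tau_{i_b}$ (resp.\ $\sigma_{i_1} < \cdots < \sigma_{i_a}$) yields a \emph{standard} multiset tableau: the new row is weakly increasing left to right because empty cells are minimal and the $\tau$'s are sorted; and each added entry is strictly larger in the last letter order than the (possibly empty) entry directly above it, since the blocks in the last row are non-propagating and hence disjoint from everything in $P$ and $Q$. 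The shape is a partition of $n$ because the first row of the shape is padded to length $n - p - b + (\text{number of cells in the first row of }P)$; I would need to confirm $n \geqslant 2k$ guarantees the padding counts $n-p-b$ and $n-p-a$ are nonnegative and that the adjoined row is weakly the longest, so the result is a legitimate partition shape.

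The inverse map is the crux, and I expect the padding/first-row mechanism to be the main obstacle. Given $(T,S)$ of common shape $\lambda \vdash n$ with contents $[\o k]$ and $[k]$, I would recover the non-propagating blocks by reading off the entries of the \emph{last} row of $T$ (these become the $\tau$'s, the non-propagating blocks in $[\o k]$) and of the last row of $S$ (the $\sigma$'s, in $[k]$), then delete that last row from each to obtain $P$ and $Q$; applying inverse RSK to $(P,Q)$ recovers the propagating data as a matching $\pi_{j_\ell}^+ \leftrightarrow \pi_{j_\ell}^-$. The delicate point is to argue that this ``last row'' is canonically determined and that the operation is consistent on both tableaux simultaneously: because $T$ and $S$ share the shape $\lambda$, removing the bottom row of $\lambda$ from each is unambiguous, but I must check that the entries designated as non-propagating in the forward map are exactly those that end up in that bottom row --- i.e.\ that the RSK output $P$ and $Q$ never place an entry in a row below all the padding. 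This follows because $P$ and $Q$ have at most $p \leqslant k$ rows while the adjoined row sits at position determined by the total size $n \geqslant 2k$, forcing the non-propagating entries strictly below the RSK-image; I would make this precise by a careful count of the shape. Finally, bijectivity follows formally once the forward and inverse maps are shown to be mutually inverse: reconstructing $\pi$ amounts to reassembling its blocks as the disjoint union of the propagating pairs $\pi_{j_\ell}^+ \cup \pi_{j_\ell}^-$, the $\sigma$'s, and the $\tau$'s, which recovers the original set partition uniquely since set partitions are determined by their block sets.
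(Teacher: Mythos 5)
Your overall strategy is the same as the paper's: check the forward map lands in the stated set of pairs, then invert it by reading the non-propagating blocks off the long padded row and applying inverse RSK to what remains. However, one step of your verification is justified incorrectly, and it is exactly the step for which the hypothesis $n \geqslant 2k$ is needed. You claim that each non-propagating block adjoined to the padded row satisfies the column-strictness condition against the cell of $P$ (resp.\ $Q$) in the same column ``since the blocks in the last row are non-propagating and hence disjoint from everything in $P$ and $Q$.'' Disjointness gives no inequality in the last letter order: for instance $\{\o1\}$ is non-propagating and disjoint from a propagating residue $\{\o5,\o6\}$, yet $\{\o1\} < \{\o5,\o6\}$, so if $\{\o1\}$ ever sat in the same column as $\{\o5,\o6\}$ the tableau would fail to be semistandard. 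The actual reason the construction works is positional, not order-theoretic: since $p + b \leqslant k$ and $p \leqslant k$, the adjoined row begins with $n - p - b \geqslant n - k \geqslant k$ empty cells, while $P$ and $Q$ each have at most $p \leqslant k$ cells and hence at most $k$ columns; so every non-propagating block lands in a column strictly beyond all columns of $P$ and $Q$, and the only column comparisons that occur are between an empty cell and a nonempty one. This is precisely the paper's one-line argument (``at least $k$ empty cells in the first row''), and you invoke $n \geqslant 2k$ only for nonnegativity of the padding and for the shape being a partition, which does not cover this point. The same column-versus-row confusion recurs in your inverse-map discussion ($P$ and $Q$ having ``at most $p \leqslant k$ rows'' is not the relevant count). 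With that correction, your argument matches the paper's proof of Theorem~\ref{thm:settableaux}.
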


\begin{proof}
Let $\pi = \{\pi_1, \pi_2, \ldots, \pi_r\} \vdash [k] \cup [\o{k}]$
and let $(T, S)$ denote the tableaux constructed by the above procedure.
Notice that the propagation number of $\pi$ is at most $k$, and since $n\geqslant
2k$, there will be at least $k$ empty cells in the first row of $T$ and in the
first row of $S$ guaranteeing that both are semistandard multiset tableaux.
In addition, the cells of $T$ are filled with the blocks of a set partition of
$[\overline{k}]$ and the cells of $S$ with the blocks of a set partition of
$[k]$, and so there are no repetitions in $T$ or $S$. Hence both tableaux are standard multiset tableaux. 

Observe that we can reconstruct the set partition $\pi$ from $(T,S)$:
non-propagating blocks are the elements of the first rows;
and the inverse of the RSK algorithm recovers the generalized permutation
defined by the propagating blocks of $\pi$.
\end{proof}

Since the number of set partitions of a set of cardinality $2k$ is equal to the
Bell number $B(2k)$~\cite[A000110, A020557]{OEIS},
Theorem~\ref{thm:settableaux} implies that for $n \geqslant 2k$,
\begin{equation}\label{eq:bellsquares}
    B(2k) = \sum_{\lambda \vdash n} \#\SMT(\lambda,k)^2~.
\end{equation}

\begin{example}\label{ex:P2n}
    Figure~\ref{fig:P2(4)} depicts the correspondence of
    Theorem~\ref{thm:settableaux} for the 15 diagrams for $P_2(4)$.
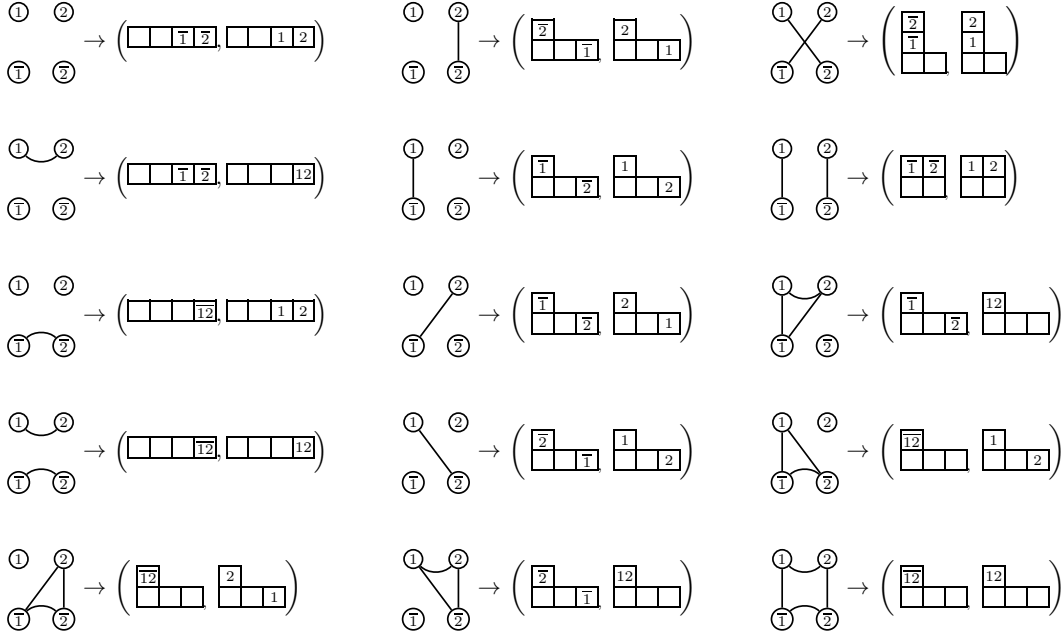
\begin{figure}[ht]
\scalebox{.8}{
    \begin{tabular}{l@{\hskip35pt}l@{\hskip35pt}l}
\begin{tikzpicture}[scale = 0.5,thick, baseline={(0,-1ex/2)}]
\tikzstyle{vertex} = [shape = circle, minimum size = 7pt, inner sep = 1pt]
\node[vertex] (G--2) at (1.5, -1) [shape = circle, draw] {\tiny $\overline 2$};
\node[vertex] (G--1) at (0.0, -1) [shape = circle, draw] {\tiny $\overline 1$};
\node[vertex] (G-1) at (0.0, 1) [shape = circle, draw] {\tiny $1$};
\node[vertex] (G-2) at (1.5, 1) [shape = circle, draw] {\tiny $2$};
\end{tikzpicture}
{$\rightarrow$}
$\left(\tiny{\young{ & & \o1& \o2 \cr}, \young{ & & 1 & 2 \cr}} \right)$
&
\begin{tikzpicture}[scale = 0.5,thick, baseline={(0,-1ex/2)}]
\tikzstyle{vertex} = [shape = circle, minimum size = 7pt, inner sep = 1pt]
\node[vertex] (G--2) at (1.5, -1) [shape = circle, draw] {\tiny $\overline 2$};
\node[vertex] (G-2) at (1.5, 1) [shape = circle, draw] {\tiny $2$};
\node[vertex] (G--1) at (0.0, -1) [shape = circle, draw] {\tiny $\overline 1$};
\node[vertex] (G-1) at (0.0, 1) [shape = circle, draw] {\tiny $1$};
\draw (G-2) .. controls +(0, -1) and +(0, 1) .. (G--2);
\end{tikzpicture}
{$\rightarrow$}
$\left(\raisebox{-6pt}{\tiny{\young{\o2 \cr & &  \o1 \cr  }, \young{ 2\cr  & &  1 \cr }}} \right)$
&
\begin{tikzpicture}[scale = 0.5,thick, baseline={(0,-1ex/2)}]
\tikzstyle{vertex} = [shape = circle, minimum size = 7pt, inner sep = 1pt]
\node[vertex] (G--2) at (1.5, -1) [shape = circle, draw] {\tiny $\overline 2$};
\node[vertex] (G-1) at (0.0, 1) [shape = circle, draw] {\tiny $1$};
\node[vertex] (G--1) at (0.0, -1) [shape = circle, draw] {\tiny $\overline 1$};
\node[vertex] (G-2) at (1.5, 1) [shape = circle, draw] {\tiny $2$};
\draw (G-1) .. controls +(0.75, -1) and +(-0.75, 1) .. (G--2);
\draw (G-2) .. controls +(-0.75, -1) and +(0.75, 1) .. (G--1);
\end{tikzpicture}
{$\rightarrow$}
$\left(\raisebox{-12pt}{\tiny{\young{ \o2 \cr \o1 \cr &  \cr  }, \young{ 2\cr 1\cr &   \cr }}} \right)$
\\\\\\
\begin{tikzpicture}[scale = 0.5,thick, baseline={(0,-1ex/2)}]
\tikzstyle{vertex} = [shape = circle, minimum size = 7pt, inner sep = 1pt]
\node[vertex] (G--2) at (1.5, -1) [shape = circle, draw] {\tiny $\overline 2$};
\node[vertex] (G--1) at (0.0, -1) [shape = circle, draw] {\tiny $\overline 1$};
\node[vertex] (G-1) at (0.0, 1) [shape = circle, draw] {\tiny $1$};
\node[vertex] (G-2) at (1.5, 1) [shape = circle, draw] {\tiny $2$};
\draw (G-1) .. controls +(0.5, -0.5) and +(-0.5, -0.5) .. (G-2);
\end{tikzpicture}
{$\rightarrow$}
$\left(\tiny{\young{ & & \o1& \o2 \cr}, \young{ & & & 12 \cr}} \right)$&
\begin{tikzpicture}[scale = 0.5,thick, baseline={(0,-1ex/2)}]
\tikzstyle{vertex} = [shape = circle, minimum size = 7pt, inner sep = 1pt]
\node[vertex] (G--2) at (1.5, -1) [shape = circle, draw] {\tiny $\overline 2$};
\node[vertex] (G--1) at (0.0, -1) [shape = circle, draw] {\tiny $\overline 1$};
\node[vertex] (G-1) at (0.0, 1) [shape = circle, draw] {\tiny $1$};
\node[vertex] (G-2) at (1.5, 1) [shape = circle, draw] {\tiny $2$};
\draw (G-1) .. controls +(0, -1) and +(0, 1) .. (G--1);
\end{tikzpicture}
{$\rightarrow$}
$\left(\raisebox{-6pt}{\tiny{\young{ \o1 \cr & &  \o2 \cr  }, \young{ 1\cr & &  2 \cr }}} \right)$&
\begin{tikzpicture}[scale = 0.5,thick, baseline={(0,-1ex/2)}]
\tikzstyle{vertex} = [shape = circle, minimum size = 7pt, inner sep = 1pt]
\node[vertex] (G--2) at (1.5, -1) [shape = circle, draw] {\tiny $\overline 2$};
\node[vertex] (G-2) at (1.5, 1) [shape = circle, draw] {\tiny $2$};
\node[vertex] (G--1) at (0.0, -1) [shape = circle, draw] {\tiny $\overline 1$};
\node[vertex] (G-1) at (0.0, 1) [shape = circle, draw] {\tiny $1$};
\draw (G-2) .. controls +(0, -1) and +(0, 1) .. (G--2);
\draw (G-1) .. controls +(0, -1) and +(0, 1) .. (G--1);
\end{tikzpicture}
{$\rightarrow$}
$\left(\raisebox{-6pt}{\tiny{\young{ \o1 &  \o2 \cr  &  \cr  }, \young{ 1& 2\cr &   \cr }}} \right)$\\\\\\
\begin{tikzpicture}[scale = 0.5,thick, baseline={(0,-1ex/2)}]
\tikzstyle{vertex} = [shape = circle, minimum size = 7pt, inner sep = 1pt]
\node[vertex] (G--2) at (1.5, -1) [shape = circle, draw] {\tiny $\overline 2$};
\node[vertex] (G--1) at (0.0, -1) [shape = circle, draw] {\tiny $\overline 1$};
\node[vertex] (G-1) at (0.0, 1) [shape = circle, draw] {\tiny $1$};
\node[vertex] (G-2) at (1.5, 1) [shape = circle, draw] {\tiny $2$};
\draw (G--2) .. controls +(-0.5, 0.5) and +(0.5, 0.5) .. (G--1);
\end{tikzpicture}
{$\rightarrow$}
$\left(\tiny{\young{ & & & \o1\o2 \cr }, \young{  & & 1& 2 \cr}} \right)$&
\begin{tikzpicture}[scale = 0.5,thick, baseline={(0,-1ex/2)}]
\tikzstyle{vertex} = [shape = circle, minimum size = 7pt, inner sep = 1pt]
\node[vertex] (G--2) at (1.5, -1) [shape = circle, draw] {\tiny $\overline 2$};
\node[vertex] (G--1) at (0.0, -1) [shape = circle, draw] {\tiny $\overline 1$};
\node[vertex] (G-2) at (1.5, 1) [shape = circle, draw] {\tiny $2$};
\node[vertex] (G-1) at (0.0, 1) [shape = circle, draw] {\tiny $1$};
\draw (G-2) .. controls +(-0.75, -1) and +(0.75, 1) .. (G--1);
\end{tikzpicture}
{$\rightarrow$}
$\left(\raisebox{-6pt}{\tiny{\young{\o1 \cr & &  \o2 \cr  }, \young{ 2 \cr & &  1 \cr }}} \right)$&
\begin{tikzpicture}[scale = 0.5,thick, baseline={(0,-1ex/2)}]
\tikzstyle{vertex} = [shape = circle, minimum size = 7pt, inner sep = 1pt]
\node[vertex] (G--2) at (1.5, -1) [shape = circle, draw] {\tiny $\overline 2$};
\node[vertex] (G--1) at (0.0, -1) [shape = circle, draw] {\tiny $\overline 1$};
\node[vertex] (G-1) at (0.0, 1) [shape = circle, draw] {\tiny $1$};
\node[vertex] (G-2) at (1.5, 1) [shape = circle, draw] {\tiny $2$};
\draw (G-1) .. controls +(0.5, -0.5) and +(-0.5, -0.5) .. (G-2);
\draw (G-2) .. controls +(-0.75, -1) and +(0.75, 1) .. (G--1);
\draw (G--1) .. controls +(0, 1) and +(0, -1) .. (G-1);
\end{tikzpicture}
{$\rightarrow$}
$\left(\raisebox{-6pt}{\tiny{\young{ \o1 \cr & & \o2 \cr  }, \young{12\cr  & &   \cr }}} \right)$
\\\\\\
\begin{tikzpicture}[scale = 0.5,thick, baseline={(0,-1ex/2)}]
\tikzstyle{vertex} = [shape = circle, minimum size = 7pt, inner sep = 1pt]
\node[vertex] (G--2) at (1.5, -1) [shape = circle, draw] {\tiny $\overline 2$};
\node[vertex] (G--1) at (0.0, -1) [shape = circle, draw] {\tiny $\overline 1$};
\node[vertex] (G-1) at (0.0, 1) [shape = circle, draw] {\tiny $1$};
\node[vertex] (G-2) at (1.5, 1) [shape = circle, draw] {\tiny $2$};
\draw (G--2) .. controls +(-0.5, 0.5) and +(0.5, 0.5) .. (G--1);
\draw (G-1) .. controls +(0.5, -0.5) and +(-0.5, -0.5) .. (G-2);
\end{tikzpicture}
{$\rightarrow$}
$\left(\tiny{\young{ & & & \o1\o2 \cr }, \young{  & & & 12 \cr}} \right)$&
\begin{tikzpicture}[scale = 0.5,thick, baseline={(0,-1ex/2)}]
\tikzstyle{vertex} = [shape = circle, minimum size = 7pt, inner sep = 1pt]
\node[vertex] (G--2) at (1.5, -1) [shape = circle, draw] {\tiny $\overline 2$};
\node[vertex] (G-1) at (0.0, 1) [shape = circle, draw] {\tiny $1$};
\node[vertex] (G--1) at (0.0, -1) [shape = circle, draw] {\tiny $\overline 1$};
\node[vertex] (G-2) at (1.5, 1) [shape = circle, draw] {\tiny $2$};
\draw (G-1) .. controls +(0.75, -1) and +(-0.75, 1) .. (G--2);
\end{tikzpicture}
{$\rightarrow$}
$\left(\raisebox{-6pt}{\tiny{\young{ \o2 \cr & &  \o1 \cr }, \young{ 1\cr  & &  2 \cr }}} \right)$&
\begin{tikzpicture}[scale = 0.5,thick, baseline={(0,-1ex/2)}]
\tikzstyle{vertex} = [shape = circle, minimum size = 7pt, inner sep = 1pt]
\node[vertex] (G--2) at (1.5, -1) [shape = circle, draw] {\tiny $\overline 2$};
\node[vertex] (G--1) at (0.0, -1) [shape = circle, draw] {\tiny $\overline 1$};
\node[vertex] (G-1) at (0.0, 1) [shape = circle, draw] {\tiny $1$};
\node[vertex] (G-2) at (1.5, 1) [shape = circle, draw] {\tiny $2$};
\draw (G-1) .. controls +(0.75, -1) and +(-0.75, 1) .. (G--2);
\draw (G--2) .. controls +(-0.5, 0.5) and +(0.5, 0.5) .. (G--1);
\draw (G--1) .. controls +(0, 1) and +(0, -1) .. (G-1);
\end{tikzpicture}
{$\rightarrow$}
$\left(\raisebox{-6pt}{\tiny{\young{ \o1\o2 \cr & &  \cr  }, \young{1\cr  & & 2  \cr }}} \right)$\\\\\\
\begin{tikzpicture}[scale = 0.5,thick, baseline={(0,-1ex/2)}]
\tikzstyle{vertex} = [shape = circle, minimum size = 7pt, inner sep = 1pt]
\node[vertex] (G--2) at (1.5, -1) [shape = circle, draw] {\tiny $\overline 2$};
\node[vertex] (G--1) at (0.0, -1) [shape = circle, draw] {\tiny $\overline 1$};
\node[vertex] (G-2) at (1.5, 1) [shape = circle, draw] {\tiny $2$};
\node[vertex] (G-1) at (0.0, 1) [shape = circle, draw] {\tiny $1$};
\draw (G-2) .. controls +(0, -1) and +(0, 1) .. (G--2);
\draw (G--2) .. controls +(-0.5, 0.5) and +(0.5, 0.5) .. (G--1);
\draw (G--1) .. controls +(0.75, 1) and +(-0.75, -1) .. (G-2);
\end{tikzpicture}
{$\rightarrow$}
$\left(\raisebox{-6pt}{\tiny{\young{ \o1\o2 \cr & &  \cr  }, \young{ 2\cr  & & 1  \cr }}} \right)$
&\begin{tikzpicture}[scale = 0.5,thick, baseline={(0,-1ex/2)}]
\tikzstyle{vertex} = [shape = circle, minimum size = 7pt, inner sep = 1pt]
\node[vertex] (G--2) at (1.5, -1) [shape = circle, draw] {\tiny $\overline 2$};
\node[vertex] (G-1) at (0.0, 1) [shape = circle, draw] {\tiny $1$};
\node[vertex] (G-2) at (1.5, 1) [shape = circle, draw] {\tiny $2$};
\node[vertex] (G--1) at (0.0, -1) [shape = circle, draw] {\tiny $\overline 1$};
\draw (G-1) .. controls +(0.5, -0.5) and +(-0.5, -0.5) .. (G-2);
\draw (G-2) .. controls +(0, -1) and +(0, 1) .. (G--2);
\draw (G--2) .. controls +(-0.75, 1) and +(0.75, -1) .. (G-1);
\end{tikzpicture}
{$\rightarrow$}
$\left(\raisebox{-6pt}{\tiny{\young{\o2 \cr  & & \o1 \cr  }, \young{12\cr  & &   \cr }}} \right)$&
\begin{tikzpicture}[scale = 0.5,thick, baseline={(0,-1ex/2)}]
\tikzstyle{vertex} = [shape = circle, minimum size = 7pt, inner sep = 1pt]
\node[vertex] (G--2) at (1.5, -1) [shape = circle, draw] {\tiny $\overline 2$};
\node[vertex] (G--1) at (0.0, -1) [shape = circle, draw] {\tiny $\overline 1$};
\node[vertex] (G-1) at (0.0, 1) [shape = circle, draw] {\tiny $1$};
\node[vertex] (G-2) at (1.5, 1) [shape = circle, draw] {\tiny $2$};
\draw (G-1) .. controls +(0.5, -0.5) and +(-0.5, -0.5) .. (G-2);
\draw (G-2) .. controls +(0, -1) and +(0, 1) .. (G--2);
\draw (G--2) .. controls +(-0.5, 0.5) and +(0.5, 0.5) .. (G--1);
\draw (G--1) .. controls +(0, 1) and +(0, -1) .. (G-1);
\end{tikzpicture}
{$\rightarrow$}
$\left(\raisebox{-6pt}{\tiny{\young{\o1\o2\cr & &  \cr  }, \young{ 12 \cr & &   \cr }}} \right)$
\end{tabular}
}
\caption{The correspondence from Theorem~\ref{thm:settableaux} for the 15
diagrams for $P_2(4)$.}
\label{fig:P2(4)}
\end{figure}
\end{example}

For any diagram $\pi \vdash [k]\cup[\o{k}]$, we define $\mbox{flip}(\pi)$ to be the reflection of $\pi$ along its horizontal axis. If
\begin{center}$\pi = $
\begin{tikzpicture}[scale = 0.5,thick, baseline={(0,-1ex/2)}]
\tikzstyle{vertex} = [shape = circle, minimum size = 7pt, inner sep = 1pt]
\node[vertex] (G--5) at (6.0, -1) [shape = circle, draw] {\tiny $\overline 5$};
\node[vertex] (G--4) at (4.5, -1) [shape = circle, draw] {\tiny $\overline 4$};
\node[vertex] (G--3) at (3.0, -1) [shape = circle, draw] {\tiny $\overline 3$};
\node[vertex] (G-5) at (6.0, 1) [shape = circle, draw] {\tiny 5};
\node[vertex] (G-4) at (4.5, 1) [shape = circle, draw] {\tiny 4};
\node[vertex] (G--2) at (1.5, -1) [shape = circle, draw] {\tiny $\overline 2$};
\node[vertex] (G--1) at (0.0, -1) [shape = circle, draw] {\tiny $\overline 1$};
\node[vertex] (G-1) at (0.0, 1) [shape = circle, draw] {\tiny 1};
\node[vertex] (G-2) at (1.5, 1) [shape = circle, draw] {\tiny 2};
\node[vertex] (G-3) at (3.0, 1) [shape = circle, draw] {\tiny 3};
\draw (G-5) .. controls +(-0.75, -1) and +(0.75, 1) .. (G--4);
\draw (G--4) .. controls +(-0.5, 0.5) and +(0.5, 0.5) .. (G--3);
\draw (G--3) .. controls +(1, 1) and +(-1, -1) .. (G-5);
\draw (G-4) .. controls +(-1, -1) and +(1, 1) .. (G--2);
\draw (G--2) .. controls +(-0.5, 0.5) and +(0.5, 0.5) .. (G--1);
\draw (G--1) .. controls +(1, 1) and +(-1, -1) .. (G-4);
\draw (G-1) .. controls +(0.5, -0.5) and +(-0.5, -0.5) .. (G-2);
\end{tikzpicture} \qquad then \qquad
$\mbox{flip}(\pi) = $
\begin{tikzpicture}[scale = 0.5,thick, baseline={(0,-1ex/2)}]
\tikzstyle{vertex} = [shape = circle, minimum size = 7pt, inner sep = 1pt]
\node[vertex] (G--5) at (6.0, -1) [shape = circle, draw] {\tiny $\overline 5$};
\node[vertex] (G--4) at (4.5, -1) [shape = circle, draw] {\tiny $\overline 4$};
\node[vertex] (G--3) at (3.0, -1) [shape = circle, draw] {\tiny $\overline 3$};
\node[vertex] (G-5) at (6.0, 1) [shape = circle, draw] {\tiny 5};
\node[vertex] (G-4) at (4.5, 1) [shape = circle, draw] {\tiny 4};
\node[vertex] (G--2) at (1.5, -1) [shape = circle, draw] {\tiny $\overline 2$};
\node[vertex] (G--1) at (0.0, -1) [shape = circle, draw] {\tiny $\overline 1$};
\node[vertex] (G-1) at (0.0, 1) [shape = circle, draw] {\tiny 1};
\node[vertex] (G-2) at (1.5, 1) [shape = circle, draw] {\tiny 2};
\node[vertex] (G-3) at (3.0, 1) [shape = circle, draw] {\tiny 3};
\draw (G--5) .. controls +(-0.5, .5) and +(0.5, -.5) .. (G-4);
\draw (G-3) .. controls +(0.5, -0.5) and +(-0.5,- 0.5) .. (G-4);
\draw (G-3) .. controls +(0.5, -0.5) and +(-.5, .5) .. (G--5);
\draw (G--4) .. controls +(-.5, .5) and +(.5, -.5) .. (G-2);
\draw (G-1) .. controls +(0.3, -0.3) and +(-0.3, -0.3) .. (G-2);
\draw (G-1) .. controls +(.5, -.5) and +(-.5, .5) .. (G--4);
\draw (G--2) .. controls +(-0.5, 0.5) and +(0.5, 0.5) .. (G--1);
\end{tikzpicture}
\end{center}
\vskip .2in

The properties in the next proposition follow directly from the RSK algorithm.

\begin{prop}\label{prop:properties-partitions}
Let $\pi \vdash [k]\cup[\o{k}]$.
\begin{enumerate}
\item[(a)] If $\pi$ inserts to $(T, S)$ with $S$ and $T$ of shape $\lambda$,
    then $|\overline{\lambda}| = \operatorname{pr}(\pi)$.
\item[(b)] If $\pi$ inserts to $(T,S)$, then $\operatorname{flip}(\pi)$ inserts to $(S, T)$.
\end{enumerate}
\end{prop}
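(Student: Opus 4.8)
The plan is to treat the two parts separately, deducing each directly from the structure of the construction that precedes the statement.

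For part~(a) I would just track shapes. The propagating blocks of $\pi$ produce the two-line array in the construction, which has exactly $p = \operatorname{pr}(\pi)$ columns (one per propagating block), so applying the correspondence of Theorem~\ref{th:oldRSK} yields tableaux $P$ and $Q$ of a common shape $\mu$ with $|\mu| = p$. By construction $T$ (resp.\ $S$) is obtained from $P$ (resp.\ $Q$) by adjoining a single new first row of length $n-p$. Since $\mu_1 \leqslant p \leqslant k$ and $n-p \geqslant 2k - p \geqslant k$, this new row is weakly longer than $\mu_1$, so $\lambda = (n-p, \mu_1, \mu_2, \ldots)$ is a genuine partition and deleting its first row returns exactly $\mu$; that is, $\o\lambda = \mu$. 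Hence $|\o\lambda| = |\mu| = p = \operatorname{pr}(\pi)$, as claimed.

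For part~(b) the organizing idea is that $\operatorname{flip}$ is induced by the relabeling $\phi\colon i \leftrightarrow \o{i}$ exchanging $[k]$ and $[\o{k}]$, which is order-preserving for the last letter order on each side. Under $\phi$, a propagating block $\pi_j$ of $\pi$ becomes a propagating block of $\operatorname{flip}(\pi)$ with $(\operatorname{flip}(\pi_j))^+ = \phi(\pi_j^-)$ and $(\operatorname{flip}(\pi_j))^- = \phi(\pi_j^+)$, while the non-propagating blocks of $\pi$ inside $[k]$ (resp.\ $[\o{k}]$) become those of $\operatorname{flip}(\pi)$ inside $[\o{k}]$ (resp.\ $[k]$); in particular $\operatorname{pr}(\operatorname{flip}(\pi)) = \operatorname{pr}(\pi)$. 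Because $\phi$ is order-preserving and the entries $\pi_j^-$ are distinct nonempty sets, ordering the columns of $\operatorname{flip}(\pi)$ by their top entries $\phi(\pi_j^-)$ is the same as ordering the columns of $\pi$ by their \emph{bottom} entries $\pi_j^-$ and then applying $\phi$. Thus the two-line array of $\operatorname{flip}(\pi)$ is precisely $\phi$ applied to the array obtained from $\pi$'s array by interchanging its two rows and re-sorting the columns.

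The decisive step is then to invoke the symmetry of the RSK correspondence: interchanging the two rows of a two-line array (and re-sorting) sends the pair $(P,Q)$ to $(Q,P)$. As with all of Knuth's results recalled in Section~\ref{section.rsk}, this symmetry uses only that the entries lie in a totally ordered set, so it applies verbatim to our multiset-valued arrays. Combined with the fact that RSK commutes with the order-preserving relabeling $\phi$, the array of $\operatorname{flip}(\pi)$ inserts to $(\phi(Q), \phi(P))$. It remains to match the adjoined first rows: the non-propagating blocks of $\operatorname{flip}(\pi)$ inside $[\o{k}]$ are exactly $\phi(\sigma_{i_1}), \ldots, \phi(\sigma_{i_a})$, with the same empty-cell count $n-p-a$, so adjoining them to $\phi(Q)$ yields $\phi(S)$; symmetrically, adjoining $\phi(\tau_{i_1}), \ldots, \phi(\tau_{i_b})$ to $\phi(P)$ yields $\phi(T)$. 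Therefore $\operatorname{flip}(\pi)$ inserts to $(\phi(S), \phi(T))$, which is the pair $(S,T)$ under the canonical identification $\phi$ of $[k]$ with $[\o{k}]$.

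I expect the only delicate points to be the two compatibility checks, not any deep argument: first, that re-sorting the columns commutes with $\phi$ (guaranteed by order-preservation and the distinctness of entries), and second, that the empty-cell counts and block orderings of the adjoined first rows line up so that the first-row data of $\operatorname{flip}(\pi)$ is literally $\phi$ of that of $\pi$ with the roles of $S$ and $T$ exchanged. Once $\operatorname{flip}$ is recognized as ``row-swap of the propagating array, followed by the relabeling $\phi$,'' both parts follow directly from shape-tracking and the standard RSK symmetry.
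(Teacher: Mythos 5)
Your argument is correct and matches the paper's intent: the paper offers no written proof beyond the remark that the properties ``follow directly from the RSK algorithm,'' and your proof is precisely that direct argument---shape-counting through the construction for part~(a), and the Knuth symmetry of RSK (row-swap sends $(P,Q)$ to $(Q,P)$) combined with the order-preserving relabeling $i \leftrightarrow \o{i}$ for part~(b). The two compatibility checks you flag (re-sorting commutes with the relabeling since the block entries are distinct, and the adjoined first rows swap roles) are exactly the right details to verify, and they hold as you state.
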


\subsection{Restriction to subalgebras}
There are other bijections between partition algebra diagrams and pairs of
standard multiset tableaux, but an important aspect of the algorithm in this paper is that
it is compatible with the (representation theory) restriction to many prominent subalgebras of $P_k(n)$.
More precisely, we will see that this single procedure captures the
combinatorics of the representation theory of all these subalgebras.

For instance, for an integer $r$ with $0 \leqslant r \leqslant k$ the subspace
spanned by the set partitions with propagating number at most $r$ is
a subalgebra of $P_k(n)$
and the irreducible representations of this subalgebra are indexed by partitions
of size less than or equal to $r$.
By Proposition~\ref{prop:properties-partitions},
a refinement of Equation~\eqref{eq:bellsquares} states
\begin{equation}
    \# \Big\{ \pi \vdash [k] \cup [\o{k}] ~\Big|~ \operatorname{pr}(\pi) \leqslant r \Big\} =
    \sum_{\substack{\lambda \vdash n\\[1pt]|\o\lambda|\leqslant r}} \# \SMT(\lambda, k)^2~.
\end{equation}

\subsubsection{Definition of the subalgebras}

\begin{figure}[ht!]
\begin{tabular}{cccc}
 \begin{tikzpicture}[scale = 0.5,thick, baseline={(0,-1ex/2)}]
\tikzstyle{vertex} = [shape = circle, minimum size = 7pt, inner sep = 1pt]
\node[vertex] (G--4) at (4.5, -1) [shape = circle, draw] {\tiny $\overline 4$};
\node[vertex] (G--3) at (3.0, -1) [shape = circle, draw] {\tiny $\overline 3$};
\node[vertex] (G-4) at (4.5, 1) [shape = circle, draw] {\tiny 4};
\node[vertex] (G--2) at (1.5, -1) [shape = circle, draw] {\tiny $\overline 2$};
\node[vertex] (G--1) at (0.0, -1) [shape = circle, draw] {\tiny $\overline 1$};
\node[vertex] (G-1) at (0.0, 1) [shape = circle, draw] {\tiny 1};
\node[vertex] (G-2) at (1.5, 1) [shape = circle, draw] {\tiny 2};
\node[vertex] (G-3) at (3.0, 1) [shape = circle, draw] {\tiny 3};
\draw (G-1)--(G--2);
\draw (G-2)--(G--4);
\draw (G-3)--(G--1);
\draw (G-4)--(G--3);
\end{tikzpicture}{\hskip .2in}&{\hskip .3in}
\begin{tikzpicture}[scale = 0.5,thick, baseline={(0,-1ex/2)}]
\tikzstyle{vertex} = [shape = circle, minimum size = 7pt, inner sep = 1pt]
\node[vertex] (G--4) at (4.5, -1) [shape = circle, draw] {\tiny $\overline 4$};
\node[vertex] (G--3) at (3.0, -1) [shape = circle, draw] {\tiny $\overline 3$};
\node[vertex] (G-4) at (4.5, 1) [shape = circle, draw] {\tiny 4};
\node[vertex] (G--2) at (1.5, -1) [shape = circle, draw] {\tiny $\overline 2$};
\node[vertex] (G--1) at (0.0, -1) [shape = circle, draw] {\tiny $\overline 1$};
\node[vertex] (G-1) at (0.0, 1) [shape = circle, draw] {\tiny 1};
\node[vertex] (G-2) at (1.5, 1) [shape = circle, draw] {\tiny 2};
\node[vertex] (G-3) at (3.0, 1) [shape = circle, draw] {\tiny 3};
\draw (G-1) .. controls +(0.5, -0.7) and +(-0.5, -0.7) .. (G-3);
\draw (G-2)--(G--1);
\draw (G-4)--(G--3);
\draw (G--2) .. controls +(0.5, 0.7) and +(-0.5, 0.7) .. (G--4);
\end{tikzpicture}{\hskip .3in} &{\hskip .01in}
\begin{tikzpicture}[scale = 0.5,thick, baseline={(0,-1ex/2)}]
\tikzstyle{vertex} = [shape = circle, minimum size = 7pt, inner sep = 1pt]
\node[vertex] (G--4) at (4.5, -1) [shape = circle, draw] {\tiny $\overline 4$};
\node[vertex] (G--3) at (3.0, -1) [shape = circle, draw] {\tiny $\overline 3$};
\node[vertex] (G-4) at (4.5, 1) [shape = circle, draw] {\tiny 4};
\node[vertex] (G--2) at (1.5, -1) [shape = circle, draw] {\tiny $\overline 2$};
\node[vertex] (G--1) at (0.0, -1) [shape = circle, draw] {\tiny $\overline 1$};
\node[vertex] (G-1) at (0.0, 1) [shape = circle, draw] {\tiny 1};
\node[vertex] (G-2) at (1.5, 1) [shape = circle, draw] {\tiny 2};
\node[vertex] (G-3) at (3.0, 1) [shape = circle, draw] {\tiny 3};
\draw (G-1)--(G--3);
\draw (G-2)--(G--1);
\draw (G-4)--(G--4);
\end{tikzpicture}{\hskip .5in}&{\hskip .1in}
\begin{tikzpicture}[scale = 0.5,thick, baseline={(0,-1ex/2)}]
\tikzstyle{vertex} = [shape = circle, minimum size = 7pt, inner sep = 1pt]
\node[vertex] (G--4) at (4.5, -1) [shape = circle, draw] {\tiny $\overline 4$};
\node[vertex] (G--3) at (3.0, -1) [shape = circle, draw] {\tiny $\overline 3$};
\node[vertex] (G-4) at (4.5, 1) [shape = circle, draw] {\tiny 4};
\node[vertex] (G--2) at (1.5, -1) [shape = circle, draw] {\tiny $\overline 2$};
\node[vertex] (G--1) at (0.0, -1) [shape = circle, draw] {\tiny $\overline 1$};
\node[vertex] (G-1) at (0.0, 1) [shape = circle, draw] {\tiny 1};
\node[vertex] (G-2) at (1.5, 1) [shape = circle, draw] {\tiny 2};
\node[vertex] (G-3) at (3.0, 1) [shape = circle, draw] {\tiny 3};
\draw (G-1) .. controls +(0.5, -0.7) and +(-0.5, -0.7) .. (G-3);
\draw (G-2)--(G--1);
\draw (G-4)--(G--3);
\end{tikzpicture}\\[17pt]
permutation&perfect matching&partial permutation&matching
\\[15pt]
\begin{tikzpicture}[scale = 0.5,thick, baseline={(0,-1ex/2)}] 
\tikzstyle{vertex} = [shape = circle, minimum size = 7pt, inner sep = 1pt] 
\node[vertex] (G--4) at (4.5, -1) [shape = circle, draw] {\tiny $\overline 4$}; 
\node[vertex] (G--1) at (0.0, -1) [shape = circle, draw] {\tiny $\overline 1$}; 
\node[vertex] (G-4) at (4.5, 1) [shape = circle, draw] {\tiny $4$}; 
\node[vertex] (G--3) at (3.0, -1) [shape = circle, draw] {\tiny $\overline 3$}; 
\node[vertex] (G--2) at (1.5, -1) [shape = circle, draw] {\tiny $\overline 2$}; 
\node[vertex] (G-1) at (0.0, 1) [shape = circle, draw] {\tiny $1$}; 
\node[vertex] (G-3) at (3.0, 1) [shape = circle, draw] {\tiny $3$}; 
\node[vertex] (G-2) at (1.5, 1) [shape = circle, draw] {\tiny $2$}; 
\draw (G-4) .. controls +(0, -1) and +(0, 1) .. (G--4); 
\draw (G--4) .. controls +(-0.7, 0.7) and +(0.7, 0.7) .. (G--1); 
\draw (G--3) .. controls +(-0.5, 0.5) and +(0.5, 0.5) .. (G--2); 
\draw (G-1) .. controls +(0.6, -0.6) and +(-0.6, -0.6) .. (G-3); 
\end{tikzpicture}{\hskip .2in}&{\hskip .2in}
\begin{tikzpicture}[scale = 0.5,thick, baseline={(0,-1ex/2)}]
\tikzstyle{vertex} = [shape = circle, minimum size = 7pt, inner sep = 1pt]
\node[vertex] (G--4) at (4.5, -1) [shape = circle, draw] {\tiny $\overline 4$};
\node[vertex] (G--3) at (3.0, -1) [shape = circle, draw] {\tiny $\overline 3$};
\node[vertex] (G-4) at (4.5, 1) [shape = circle, draw] {\tiny 4};
\node[vertex] (G--2) at (1.5, -1) [shape = circle, draw] {\tiny $\overline 2$};
\node[vertex] (G--1) at (0.0, -1) [shape = circle, draw] {\tiny $\overline 1$};
\node[vertex] (G-1) at (0.0, 1) [shape = circle, draw] {\tiny 1};
\node[vertex] (G-2) at (1.5, 1) [shape = circle, draw] {\tiny 2};
\node[vertex] (G-3) at (3.0, 1) [shape = circle, draw] {\tiny 3};
\draw (G-1) .. controls +(0.5, -0.5) and +(-0.5, -0.5) .. (G-2);
\draw (G-3)--(G--1);
\draw (G-4)--(G--4);
\end{tikzpicture}{\hskip .3in} &{\hskip .01in}
\begin{tikzpicture}[scale = 0.5,thick, baseline={(0,-1ex/2)}]
\tikzstyle{vertex} = [shape = circle, minimum size = 7pt, inner sep = 1pt]
\node[vertex] (G--4) at (4.5, -1) [shape = circle, draw] {\tiny $\overline 4$};
\node[vertex] (G--3) at (3.0, -1) [shape = circle, draw] {\tiny $\overline 3$};
\node[vertex] (G-4) at (4.5, 1) [shape = circle, draw] {\tiny 4};
\node[vertex] (G--2) at (1.5, -1) [shape = circle, draw] {\tiny $\overline 2$};
\node[vertex] (G--1) at (0.0, -1) [shape = circle, draw] {\tiny $\overline 1$};
\node[vertex] (G-1) at (0.0, 1) [shape = circle, draw] {\tiny 1};
\node[vertex] (G-2) at (1.5, 1) [shape = circle, draw] {\tiny 2};
\node[vertex] (G-3) at (3.0, 1) [shape = circle, draw] {\tiny 3};
\draw (G-1) .. controls +(0.5, -0.5) and +(-0.5, -0.5) .. (G-2);
\draw (G-3)--(G--1);
\draw (G-4)--(G--2);
\draw (G--3) .. controls +(0.5, 0.5) and +(-0.5, 0.5) .. (G--4);
\end{tikzpicture}{\hskip .4in}&{\hskip .1in}
\begin{tikzpicture}[scale = 0.5,thick, baseline={(0,-1ex/2)}]
\tikzstyle{vertex} = [shape = circle, minimum size = 7pt, inner sep = 1pt]
\node[vertex] (G--4) at (4.5, -1) [shape = circle, draw] {\tiny $\overline 4$};
\node[vertex] (G--3) at (3.0, -1) [shape = circle, draw] {\tiny $\overline 3$};
\node[vertex] (G-4) at (4.5, 1) [shape = circle, draw] {\tiny 4};
\node[vertex] (G--2) at (1.5, -1) [shape = circle, draw] {\tiny $\overline 2$};
\node[vertex] (G--1) at (0.0, -1) [shape = circle, draw] {\tiny $\overline 1$};
\node[vertex] (G-1) at (0.0, 1) [shape = circle, draw] {\tiny 1};
\node[vertex] (G-2) at (1.5, 1) [shape = circle, draw] {\tiny 2};
\node[vertex] (G-3) at (3.0, 1) [shape = circle, draw] {\tiny 3};
\draw (G-2)--(G--1);
\draw (G-4)--(G--2);
\end{tikzpicture}
\\[17pt]
planar&planar matching&planar perfect&planar partial\\[-2pt]
&&matching&permutation
\end{tabular}
\caption{Examples of types of set partition diagrams.}
\label{fig:setpartitions}
\end{figure}
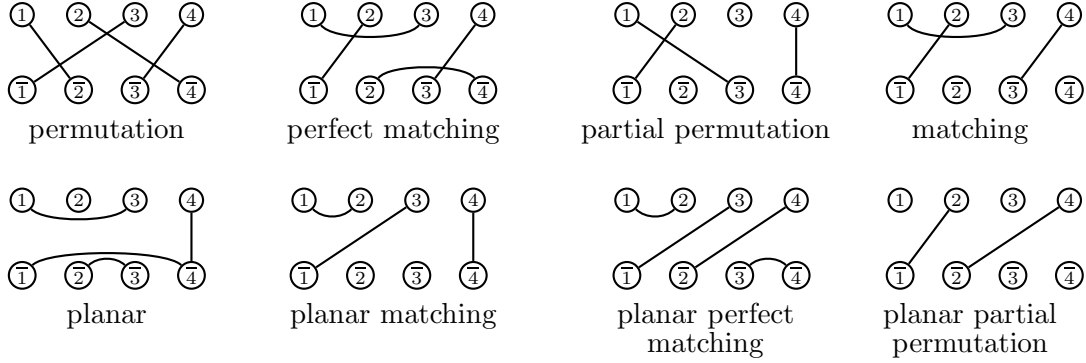

We introduce some terminology that will make it easier to define
the subalgebras.
See Figure~\ref{fig:setpartitions} for examples of the types of diagrams that
we define below.
A set partition $\pi$ is called \defn{planar}
if it can be represented as a graph without edge crossings inside the rectangle formed by its vertices.  A set partition is 
called a \defn{matching} if all its blocks are of size at most 2. We call a set partition a \defn{perfect matching} if all its 
blocks are of size 2.  The number of perfect matchings of $2n$ elements is equal
to $(2n-1)!! = (2n-1)(2n-3)\cdots(1)$.
A perfect matching, where each block contains an element in $[k]$ and an element in $[\overline{k}]$ 
is a \defn{permutation}.  A set partition is a \defn{partial permutation} if all its blocks have size one or two and 
every block of size two is propagating.

\begin{table}[b]
\captionof{table}{\small Subalgebras of the partition algebra $P_k(n)$.}
\label{tab:subalgebra}
\scalebox{.89}{
\centering
\renewcommand{\arraystretch}{1.4}
\begin{tabular} {p{70mm}|c|c}
\toprule
Subalgebra                                                                                               & Diagrams spanning the subalgebra & Dimension                                                                 \\ \midrule
Partition algebra $P_k(n)$                                                                               & all diagrams                     & $B(2k)$                                                                   \\
Group algebra of symmetric group $\mathbb{C} S_k$                                                        & permutations                     & $k!$                                                                      \\
Brauer algebra $B_k(n)$ \footnotesize \cite{Brauer, Wenzl}                                               & perfect matchings                & $(2k-1)!! $                                                               \\
Rook algebra $R_k(n)$ \footnotesize \cite{Solomon}                                                       & partial permutations             & $\sum\limits_{i=0}^k \smallbinom{k}{i}^2 i!$                              \\
Rook-Brauer algebra $R\!B_k(n)$ \footnotesize \cite{HD, MM}                                              & matchings                        & $\sum\limits_{i=0}^k \smallbinom{2k}{2i} (2i-1)!!$                        \\
Temperley--Lieb algebra $T\!L_k(n)$ \newline \null \hfill \footnotesize \cite{TL, Jones1, West, Martin1} & planar perfect matchings         & $\frac{1}{k+1} \smallbinom{2k}{k}$                                        \\
Motzkin algebra $M_k(n)$ \footnotesize \cite{BH-motzkin}                                                 & planar matchings                 & $\sum\limits_{i=0}^k\frac{1}{i+1} \smallbinom{2i}{i} \smallbinom{2k}{2i}$ \\
Planar rook algebra $P\!R_k(n)$ \footnotesize \cite{FHH}                                                 & planar partial permutations      & $\smallbinom{2k}{k}$                                                      \\
Planar algebra $P\!P_k(n)$ \footnotesize \cite{Jones}                                                    & planar diagrams                  & $\frac{1}{2k+1}\smallbinom{4k}{2k}$                                       \\
\bottomrule
\end{tabular}}
\end{table}

Table~\ref{tab:subalgebra} summarizes the definitions of
the subalgebras that we work with.
In \cite{HJ}, the authors construct the irreducible representations
of these subalgebras using standard multiset tableaux (which they call \emph{set-partition tableaux}) and compute their
characters. Their results provide a detailed study of the
representation theory of these subalgebras from which we extract the
information in Table~\ref{tab:irreducibles}.

\begin{table}[b]
\caption{\small Dimensions and index sets for irreducible representations of
    certain subalgebras $A_k$ of the partition algebra $P_k(n)$~\cite{HJ}.
    We highlight that the bottom four subalgebras are all planar and
    that their irreducible representations are indexed by partitions
    having a single part.}
\label{tab:irreducibles}
\scalebox{.90}{
\centering
\renewcommand{\arraystretch}{1.5}
\begin{tabular}{c|c|c}
\toprule
$A_k$ & Index set for irreducibles
      & Dimension of irreducible $V^\lambda_{A_k}$\\
\midrule
$P_k(n)$&$\{ \lambda \mid \lambda \vdash m, 0 \leqslant m \leqslant k\}$
        &$f^{{\lambda}} \sum\limits_{i=|\lambda|}^{k} \smallbinom{k}{i} \smallstirling{i}{|\lambda|} B(k-i)$\\
$\mathbb{C}S_k$ &$\{ \lambda \mid \lambda \vdash k\}$
                & $f^{\lambda}$ \\
$B_k(n)$& $\{\lambda\mid \lambda \vdash k-2r, 0 \leqslant 2r \leqslant k \}$ 
        & $f^{{\lambda}} \smallbinom{k}{|{\lambda}|} (k-|{\lambda}|-1)!!$\\
$R_k(n)$&$\{ \lambda \mid \lambda \vdash m, 0 \leqslant m \leqslant k\}$
        &
$f^{{\lambda}} \smallbinom{k}{|{\lambda}|}$\\
$R\!B_k(n)$& $\{ \lambda \mid \lambda \vdash m, 0 \leqslant m \leqslant k\}$
           &$f^{{\lambda}} \smallbinom{k}{|{\lambda}|}  \sum\limits_{i=0}^{(k-|{\lambda}|)/2} \smallbinom{k-|{\lambda}|}{2i} (2i-1)!!$\\
\midrule
$T\!L_k(n)$ &$\{ (k-2r) \mid 0 \leqslant 2r \leqslant k\}$
            & $\smallbinom{k}{ (k-m)/2} -\smallbinom{k}{ (k-m)/2-1}$\\
$M_k(n)$&$\{(m) \mid 0 \leqslant m \leqslant k\}$
        & $\sum\limits_{i=0}^{\lfloor(k-m)/2\rfloor} \smallbinom{{k}}{{m+2i}} \Big( \smallbinom{{m+2i}}{ {i}} -\smallbinom{{m+2i}}{{i-1}} \Big)$\\
$P\!R_k(n)$ &$\{(m) \mid 0 \leqslant m \leqslant k\}$
            & $\smallbinom{k}{ m}$\\
$P\!P_k(n)$&$\{(m) \mid 0 \leqslant m \leqslant k\}$
           & $\smallbinom{2k }{ k-m} -\smallbinom{2k}{ k-m-1}$\\
\bottomrule
\end{tabular}}
\end{table}

\subsubsection{Restricting the correspondence to the subalgebras}
We characterize the standard multiset tableaux produced by the
correspondence of Section~\ref{ssec:RSK-for-diagrams} when restricted to
the diagrams spanning one of the subalgebras $A_k$ in
Table~\ref{tab:subalgebra}. We denote this set by \defn{$\SMT_{A_k}(\lambda)$}.

A standard multiset tableau is \defn{matching} if the first row contains sets
of size less than or equal to $2$ and all other rows contain only sets of size
$1$. In Lemma~\ref{lemma:tableauxdesc}, we show these are the multiset tableaux
that correspond to matching diagrams by our insertion algorithm.

Two sets $S$ and $S'$ are \defn{non-crossing}
if there do not exist elements $a,b \in S$ and $c,d \in S'$ such that
either $a<c<b<d$ or $c<a<d<b$. We say that $c \in [k]$
is \defn{between} a set $S$ if there exist $a,b \in S$ such that $a<c<b$.
We call a standard multiset tableau \defn{planar} if it has two rows,
if the sets in the first row are pairwise non-crossing, and if no element belonging
to one of the sets in the second row is between any set in the tableau
(apart from the set containing the element).
In Lemma~\ref{lemma:tableauxdesc}, we show these are the multiset tableaux
that correspond to planar diagrams by our insertion algorithm.

\begin{lemma}
\label{lemma:tableauxdesc}
Let $k$ be any positive integer, $\lambda$ a partition of an integer $n$ with
$n \geqslant 2k$, and $A_k$ one of the subalgebras of $P_k(n)$ defined in
Table~\ref{tab:subalgebra}.
If we apply the insertion procedure of Theorem~\ref{thm:settableaux} to the
diagrams spanning $A_k$, then the resulting standard multiset
tableaux are characterized by the properties listed in Table~\ref{tab:characterisation_of_Ak-SMT}.
\end{lemma}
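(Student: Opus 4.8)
The plan is to exploit that the map of Theorem~\ref{thm:settableaux} is a bijection, so that for each subalgebra $A_k$ it suffices to prove a single equivalence: the diagram $\pi$ lies in the spanning set of $A_k$ if and only if the pair $(T,S)$ produced by the insertion satisfies the property recorded in Table~\ref{tab:characterisation_of_Ak-SMT}. The first thing I would do is record the dictionary between the blocks of $\pi$ and the cells of $(T,S)$ that is built into the construction preceding Theorem~\ref{thm:settableaux}. By definition, the non-propagating blocks contained in $[\o k]$ (resp.\ in $[k]$) are exactly the non-empty entries of the first row of $T$ (resp.\ of $S$), while a propagating block $\pi_j$ contributes its bottom half $\pi_j^-$ to $P$ and its top half $\pi_j^+$ to $Q$ through the RSK algorithm of Theorem~\ref{th:oldRSK}; since row insertion only permutes the inserted entries among the cells, the multiset of entries of $T$ lying in rows $2,3,\dots$ is precisely $\{\pi_j^- \mid \pi_j \text{ propagating}\}$, and likewise for $S$ with the $\pi_j^+$. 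This dictionary lets me read block sizes and the propagating/non-propagating dichotomy directly off the shapes and entry sizes of $(T,S)$.

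For the subalgebras whose diagrams are constrained only by block size (the rook--Brauer, Brauer, rook, and symmetric-group cases) I would argue purely through this dictionary. A non-propagating block has size equal to the size of its first-row cell; a propagating block of size $2$ is forced to be of the form $\{i,\o j\}$, so both $\pi_j^+$ and $\pi_j^-$ are singletons and hence contribute size-$1$ entries to rows $2,3,\dots$. Reading this backwards, a block of size $\geqslant 3$ manifests as an entry of size $\geqslant 2$ somewhere, and whether such a block is a size-$2$ propagating block or a larger non-propagating one is detected by whether the offending cell lies in a lower row or in the first row. Consequently: all blocks have size $\leqslant 2$ (the matching condition) exactly when the first rows of $T$ and $S$ contain sets of size $\leqslant 2$ and all lower rows contain singletons, which is the matching-tableau property; requiring in addition that every block have size exactly $2$ (perfect matchings) forces the non-empty first-row entries to have size exactly $2$; requiring every size-$2$ block to be propagating (partial permutations) forces every first-row entry to be a singleton, so that $T$ and $S$ are standard Young tableaux; and the permutation case is the further specialization in which there are no non-propagating blocks at all, i.e.\ the first rows are empty and $|\o\lambda|=k$. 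Each of these is an immediate ``if and only if'' once the dictionary is in place.

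The heart of the lemma is the planar case, and here the main new input is an analysis of RSK on the propagating blocks. I would first prove that $\pi$ has no two propagating blocks whose edges cross if and only if the sequence $\pi_{j_1}^-,\dots,\pi_{j_p}^-$, listed in the last letter order of the $\pi_j^+$, is weakly increasing in last letter order: a descent $\max(\pi_{j}^-)>\max(\pi_{j'}^-)$ paired with $\max(\pi_j^+)<\max(\pi_{j'}^+)$ is exactly the combinatorial signature of a crossing pair of propagating edges. Since RSK row-inserts a weakly increasing sequence into a single row and records it into a single row, the propagating part of a planar diagram produces single-row tableaux $P$ and $Q$, whence $T$ and $S$ have at most two rows; conversely a crossing propagating pair creates a bump, hence a second row in $P$ and three or more rows in $T$. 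I would then translate the remaining ways two blocks can cross into the two auxiliary conditions: two non-propagating arcs on the same side cross precisely when their sets interleave, which is the failure of the pairwise non-crossing condition on the first-row sets; and a crossing involving a propagating edge (against a non-propagating arc, or against another propagating block that spans it on the bottom) occurs precisely when some bottom vertex of a propagating block lies strictly between the extremes of another block, which is the failure of the betweenness condition on the second-row entries. Combining these with the size dictionary of the previous paragraph yields the characterizations for the planar partition, Temperley--Lieb, Motzkin, and planar rook algebras as the intersection of the planar-tableau property with the appropriate size constraint.

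The step I expect to be the main obstacle is establishing this planar equivalence in full rigor, in both directions and for blocks of arbitrary size. The delicate points are that ``crossing'' must be formulated at the level of the regions occupied by multi-element blocks rather than single arcs, and that one must verify the three auxiliary conditions---at most two rows, pairwise non-crossing first row, and betweenness of the second row---jointly detect every crossing of the diagram, with no missed configuration and no double counting; this requires a careful enumeration of the propagating--propagating, propagating--non-propagating, and non-propagating--non-propagating cases, separating same-side from opposite-side interactions. Throughout I would use the flip symmetry of Proposition~\ref{prop:properties-partitions}(b), which exchanges $(T,S)$ with $(S,T)$ and carries planar diagrams to planar diagrams, to halve the work by deducing the conditions on $S$ from those on $T$. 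Finally, the dimension counts of Table~\ref{tab:irreducibles} together with the bijection of Theorem~\ref{thm:settableaux} provide an independent numerical check that each characterization is exact.
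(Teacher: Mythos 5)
Your proposal is correct and follows essentially the same route as the paper's proof: the same dictionary (non-propagating blocks become the first rows of $S$ and $T$, propagating blocks feed the RSK step and populate the higher rows) yields the size constraints, and the same observations---non-crossing propagating blocks are inserted in increasing order so $P$ and $Q$ are single rows, while the non-crossing and betweenness conditions account for the remaining interactions---handle the planar case; you are, if anything, more explicit than the paper about the converse direction and the case analysis it requires. One small caution: your intermediate claim that two propagating blocks cross \emph{iff} the sequence $\pi_{j_1}^-,\dots,\pi_{j_p}^-$ has a descent is too strong (e.g.\ $\{1,3,\o2\}$ and $\{2,\o1\}$ cross although the bottom halves insert in weakly increasing order), but this configuration is caught by the betweenness condition on $S$, so the joint characterization you describe is still exact.
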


\begin{table}[b]
\caption{\small Properties characterizing the standard multiset tableaux
    that belong to $\SMT_{A_k}(\lambda)$. These are the tableaux produced by
    the correspondence of Section~\ref{ssec:RSK-for-diagrams} when restricted
    to the diagrams spanning $A_k$.}
\label{tab:characterisation_of_Ak-SMT}
\scalebox{.90}{
\centering
\renewcommand{\arraystretch}{1.2}
\begin{tabular}{lp{50mm}p{30mm}p{35mm}}
    \toprule
    & & \multicolumn{2}{c}{properties characterizing $\SMT_{A_k}$} \\ \cmidrule(lr){3-4}
    $A_k$ & diagrams spanning $A_k$ & sizes of entries \newline in first row & other properties \\
    \midrule
    $P_k(n)$        & all diagrams                & ---           & ---      \\
    $P\!P_k(n)$     & planar diagrams             & ---           & planar \\
    $\mathbb{C}S_k$ & permutations                & $0$           & matching \\
    $B_k(n)$        & perfect matchings           & $0$, $2$      & matching \\
    $R_k(n)$        & partial permutations        & $0$, $1$      & matching \\
    $R\!B_k(n)$     & matchings                   & $0$, $1$, $2$ & matching \\
    $T\!L_k(n)$     & planar perfect matchings    & $0$, $2$      & matching \emph{\&} planar \\
    $M_k(n)$        & planar matchings            & $0$, $1$, $2$ & matching \emph{\&} planar \\
    $P\!R_k(n)$     & planar partial permutations & $0$, $1$      & matching \emph{\&} planar \\
    \bottomrule
\end{tabular}
}
\end{table}

\begin{proof}
    The case $A_k = P_k(n)$ follows from Theorem~\ref{thm:settableaux}.

For $A_k = P\!P_k(n)$, observe that if a set partition $\pi$ is planar, then the propagating blocks $\pi_{i_1}, \ldots, \pi_{i_p}$ 
are inserted in order as the blocks are non-crossing. This means that the shape of the insertion
tableau $T$ (as well as that of the recording tableau $S$) has at most two rows. Also notice that non-propagating blocks have to be non-crossing
(since the diagram is planar) and these entries constitute the entries in the first row of $T$ and $S$.
Furthermore, propagating blocks in $\pi$ correspond to entries in the second row of $T$ and $S$.
If a letter in the second row of $T$ (respectively, $S$) is between another set
in $T$ (respectively, $S$), then the diagram $\pi$ is not planar.

By the correspondence described in Section~\ref{ssec:RSK-for-diagrams}, a propagating block of a diagram
$\pi$ is a set of size $2$ with one element in $[k]$ and one element in $[\o{k}]$
if and only if the corresponding entries in the pair of standard multiset
tableaux $(T,S)$ have size one and appear in the
second row or above in both $T$ and $S$.
The non-propagating blocks are all in the first row and, in a matching diagram, all
of the blocks are of size less than or equal to $2$.
This implies that, if $A_k$ is spanned by diagrams that are matching, then
these diagrams insert to tableaux which are matching.
Similarly, if the subalgebra is spanned by diagrams that are planar, then these
diagrams insert to tableaux which are planar.

Since the non-empty sets that appear in the first row of $T$ and the first row
of $S$ correspond to the non-propagating blocks of the set partition, we obtain
the restrictions on sizes of the sets appearing in the first row.
For instance, if $A_k = \CC S_k$, then there are no non-propagating blocks,
and so the first row of $S$ and of $T$ contain only empty sets.
If $A_k$ is $R_k(n)$ or $P\!R_k(n)$, then the non-propagating blocks are all of
size $1$.
If $A_k$ is $B_k(n)$ or $T\!L_k(n)$, then the blocks (and hence the
non-propagating blocks) are all of size $2$.
If $A_k$ is $R\!B_k(n)$ or $M_k(n)$, then non-propagating blocks are
of size at most $2$.
\end{proof}

\begin{example}
In Table~\ref{table.tableaux}, we give examples of the tableaux described in Lemma~\ref{lemma:tableauxdesc} for
$k=9$ and $n$ sufficiently large.
\end{example}

\begin{table}[b]
\caption{Examples of the tableaux in Lemma~\ref{lemma:tableauxdesc} for
$k=9$ and $n \geqslant 2k$.
\label{table.tableaux}}
\begin{center}
\scalebox{0.85}{
\begin{tabular}{ccll}
    \toprule
    Algebra & Diagram & Insertion Tableaux ($T$) & Recording Tableaux ($S$) \\ \midrule
$P_k(n)$ &
\begin{tikzpicture}[scale = 0.4,thick, baseline={(0,-1ex/2)}]
\tikzstyle{vertex} = [shape = circle, minimum size = 7pt, inner sep = 1pt]
\node[vertex] (G--9) at (12.0, -1) [shape = circle, draw] {\tiny $\overline 9$};
\node[vertex] (G--8) at (10.5, -1) [shape = circle, draw] {\tiny $\overline 8$};
\node[vertex] (G--7) at (9.0, -1) [shape = circle, draw] {\tiny $\overline 7$};
\node[vertex] (G--6) at (7.5, -1) [shape = circle, draw] {\tiny $\overline 6$};
\node[vertex] (G--5) at (6.0, -1) [shape = circle, draw] {\tiny $\overline 5$};
\node[vertex] (G--4) at (4.5, -1) [shape = circle, draw] {\tiny $\overline 4$};
\node[vertex] (G--3) at (3.0, -1) [shape = circle, draw] {\tiny $\overline 3$};
\node[vertex] (G--2) at (1.5, -1) [shape = circle, draw] {\tiny $\overline 2$};
\node[vertex] (G--1) at (0.0, -1) [shape = circle, draw] {\tiny $\overline 1$};
\node[vertex] (G-9) at (12.0, 1) [shape = circle, draw] {\tiny $9$};
\node[vertex] (G-8) at (10.5, 1) [shape = circle, draw] {\tiny $8$};
\node[vertex] (G-7) at (9.0, 1) [shape = circle, draw] {\tiny $7$};
\node[vertex] (G-6) at (7.5, 1) [shape = circle, draw] {\tiny $6$};
\node[vertex] (G-5) at (6.0, 1) [shape = circle, draw] {\tiny $5$};
\node[vertex] (G-4) at (4.5, 1) [shape = circle, draw] {\tiny $4$};
\node[vertex] (G-3) at (3.0, 1) [shape = circle, draw] {\tiny $3$};
\node[vertex] (G-2) at (1.5, 1) [shape = circle, draw] {\tiny $2$};
\node[vertex] (G-1) at (0.0, 1) [shape = circle, draw] {\tiny $1$};
\draw (G-1).. controls +(0.5, -0.5) and +(-0.5, -0.5) ..(G-2);
\draw (G-2).. controls +(0.5, -0.7) and +(-0.5, -0.7) ..(G-4);
\draw (G--2).. controls +(0.5, 0.8) and +(-0.5, 0.8) .. (G--6);
\draw (G-4)--(G--6);
\draw (G-1)--(G--2);
\draw (G-5).. controls +(0.5, -0.5) and +(-0.5, -0.5) ..(G-6);
\draw (G-6).. controls +(0.5, -0.5) and +(-0.5, -0.5) ..(G-7);
\draw (G--3).. controls +(0.5, 0.5) and +(-0.5, 0.5) .. (G--4);
\draw (G--4).. controls +(0.5, 0.5) and +(-0.5, 0.5) .. (G--5);
\draw (G-5).. controls +(0.5, -0.5) and +(-0.5, 0.5) ..(G--3);
\draw (G-7).. controls +(0.5, -0.5) and +(-0.5, 0.5) ..(G--5);
\draw (G--7).. controls +(0.5, 0.5) and +(-0.5, 0.5) .. (G--8);
\draw (G-8).. controls +(0.5, -0.5) and +(-0.5, -0.5) ..(G-9);
\draw (G-9)--(G--9);
\draw (G-8)--(G--9);
\end{tikzpicture} &
\begin{tikzpicture}[scale = 0.4,thick, baseline={(0,-1ex/2)}]
\node at (26,0) {$\tiny{\squaresize=12pt \young{\o2\o6 \cr \o3\o4\o5 & \o9 \cr & & & \compactcdots & & \o1 & \o7\o8 \cr}}$};
\end{tikzpicture} &
\begin{tikzpicture}[scale = 0.5,thick, baseline={(0,-1ex/2)}]
\node at (26,0) {$\tiny{\squaresize=12pt \young{567 \cr 124 & 89 \cr & & & \compactcdots & & & 3\cr}}$};
\end{tikzpicture}
\\ \midrule
$\mathbb{C}S_k$ &
\begin{tikzpicture}[scale = 0.4,thick, baseline={(0,-1ex/2)}]
\tikzstyle{vertex} = [shape = circle, minimum size = 7pt, inner sep = 1pt]
\node[vertex] (G--9) at (12.0, -1) [shape = circle, draw] {\tiny $\overline 9$};
\node[vertex] (G--8) at (10.5, -1) [shape = circle, draw] {\tiny $\overline 8$};
\node[vertex] (G--7) at (9.0, -1) [shape = circle, draw] {\tiny $\overline 7$};
\node[vertex] (G--6) at (7.5, -1) [shape = circle, draw] {\tiny $\overline 6$};
\node[vertex] (G--5) at (6.0, -1) [shape = circle, draw] {\tiny $\overline 5$};
\node[vertex] (G--4) at (4.5, -1) [shape = circle, draw] {\tiny $\overline 4$};
\node[vertex] (G--3) at (3.0, -1) [shape = circle, draw] {\tiny $\overline 3$};
\node[vertex] (G--2) at (1.5, -1) [shape = circle, draw] {\tiny $\overline 2$};
\node[vertex] (G--1) at (0.0, -1) [shape = circle, draw] {\tiny $\overline 1$};
\node[vertex] (G-9) at (12.0, 1) [shape = circle, draw] {\tiny $9$};
\node[vertex] (G-8) at (10.5, 1) [shape = circle, draw] {\tiny $8$};
\node[vertex] (G-7) at (9.0, 1) [shape = circle, draw] {\tiny $7$};
\node[vertex] (G-6) at (7.5, 1) [shape = circle, draw] {\tiny $6$};
\node[vertex] (G-5) at (6.0, 1) [shape = circle, draw] {\tiny $5$};
\node[vertex] (G-4) at (4.5, 1) [shape = circle, draw] {\tiny $4$};
\node[vertex] (G-3) at (3.0, 1) [shape = circle, draw] {\tiny $3$};
\node[vertex] (G-2) at (1.5, 1) [shape = circle, draw] {\tiny $2$};
\node[vertex] (G-1) at (0.0, 1) [shape = circle, draw] {\tiny $1$};
\draw (G-1)--(G--4);
\draw (G-2).. controls +(0.5, -0.7) and +(-0.5, 0.7) ..(G--6);
\draw (G-3)--(G--1);
\draw (G-4)--(G--2);
\draw (G-5)--(G--3);
\draw (G-6)--(G--7);
\draw (G-7)--(G--9);
\draw (G-8)--(G--8);
\draw (G-9).. controls +(0.5, -0.7) and +(-0.5, 0.7) ..(G--5);
\end{tikzpicture} &
\begin{tikzpicture}[scale = 0.4,thick, baseline={(0,-1ex/2)}]
\node at (24.5,0) {$\tiny{\young{\o9 \cr \o4 & \o6 & \o7 \cr \o1 & \o2 & \o3 & \o5 & \o8 \cr & & & & & & \compactcdots & \cr}}$}; 
\end{tikzpicture} &
\begin{tikzpicture}[scale = 0.4,thick, baseline={(0,-1ex/2)}]
\node at (24.5,0) {$\tiny{\young{9 \cr 3 & 4 & 8 \cr 1 & 2 & 5 & 6 & 7 \cr & & & & & & \compactcdots & \cr}}$};
\end{tikzpicture} \\ \midrule
$B_k(n)$ &
\begin{tikzpicture}[scale = 0.4,thick, baseline={(0,-1ex/2)}]
\tikzstyle{vertex} = [shape = circle, minimum size = 7pt, inner sep = 1pt]
\node[vertex] (G--9) at (12.0, -1) [shape = circle, draw] {\tiny $\overline 9$};
\node[vertex] (G--8) at (10.5, -1) [shape = circle, draw] {\tiny $\overline 8$};
\node[vertex] (G--7) at (9.0, -1) [shape = circle, draw] {\tiny $\overline 7$};
\node[vertex] (G--6) at (7.5, -1) [shape = circle, draw] {\tiny $\overline 6$};
\node[vertex] (G--5) at (6.0, -1) [shape = circle, draw] {\tiny $\overline 5$};
\node[vertex] (G--4) at (4.5, -1) [shape = circle, draw] {\tiny $\overline 4$};
\node[vertex] (G--3) at (3.0, -1) [shape = circle, draw] {\tiny $\overline 3$};
\node[vertex] (G--2) at (1.5, -1) [shape = circle, draw] {\tiny $\overline 2$};
\node[vertex] (G--1) at (0.0, -1) [shape = circle, draw] {\tiny $\overline 1$};
\node[vertex] (G-9) at (12.0, 1) [shape = circle, draw] {\tiny $9$};
\node[vertex] (G-8) at (10.5, 1) [shape = circle, draw] {\tiny $8$};
\node[vertex] (G-7) at (9.0, 1) [shape = circle, draw] {\tiny $7$};
\node[vertex] (G-6) at (7.5, 1) [shape = circle, draw] {\tiny $6$};
\node[vertex] (G-5) at (6.0, 1) [shape = circle, draw] {\tiny $5$};
\node[vertex] (G-4) at (4.5, 1) [shape = circle, draw] {\tiny $4$};
\node[vertex] (G-3) at (3.0, 1) [shape = circle, draw] {\tiny $3$};
\node[vertex] (G-2) at (1.5, 1) [shape = circle, draw] {\tiny $2$};
\node[vertex] (G-1) at (0.0, 1) [shape = circle, draw] {\tiny $1$};
\draw (G-1).. controls +(0.5, -0.7) and +(-0.5, -0.7) ..(G-4);
\draw (G-6).. controls +(0.5, -0.5) and +(-0.5, -0.5) ..(G-7);
\draw (G--2).. controls +(0.5, 0.5) and +(-0.5, 0.5) .. (G--3);
\draw (G--6).. controls +(0.5, 0.7) and +(-0.5, 0.7) .. (G--9);
\draw (G-2).. controls +(0.5, -0.7) and +(-0.5, 0.7) .. (G--8);
\draw (G-3)--(G--4);
\draw (G-5).. controls +(0.5, -0.7) and +(-0.5, 0.7) ..(G--1);
\draw (G-8)--(G--7);
\draw (G-9).. controls +(0.5, -0.7) and +(-0.5, 0.7) ..(G--5);
\end{tikzpicture} &
 \begin{tikzpicture}[scale = 0.4,thick, baseline={(0,-1ex/2)}]
 \node at (23,0) {$\tiny{\young{\o8 \cr \o4 & \o7 \cr \o1 & \o5 \cr & & & \compactcdots & & \o2\o3 & \o6\o9 \cr}}$};
\end{tikzpicture} &
 \begin{tikzpicture}[scale = 0.4,thick, baseline={(0,-1ex/2)}]
 \node at (23,0) {$\tiny{\young{5 \cr 3 & 9 \cr 2 & 8 \cr & & & \compactcdots & & 14 & 67 \cr}}$};
\end{tikzpicture} \\ \midrule
$R_k(n)$ &
\begin{tikzpicture}[scale = 0.4,thick, baseline={(0,-1ex/2)}]
\tikzstyle{vertex} = [shape = circle, minimum size = 7pt, inner sep = 1pt]
\node[vertex] (G--9) at (12.0, -1) [shape = circle, draw] {\tiny $\overline 9$};
\node[vertex] (G--8) at (10.5, -1) [shape = circle, draw] {\tiny $\overline 8$};
\node[vertex] (G--7) at (9.0, -1) [shape = circle, draw] {\tiny $\overline 7$};
\node[vertex] (G--6) at (7.5, -1) [shape = circle, draw] {\tiny $\overline 6$};
\node[vertex] (G--5) at (6.0, -1) [shape = circle, draw] {\tiny $\overline 5$};
\node[vertex] (G--4) at (4.5, -1) [shape = circle, draw] {\tiny $\overline 4$};
\node[vertex] (G--3) at (3.0, -1) [shape = circle, draw] {\tiny $\overline 3$};
\node[vertex] (G--2) at (1.5, -1) [shape = circle, draw] {\tiny $\overline 2$};
\node[vertex] (G--1) at (0.0, -1) [shape = circle, draw] {\tiny $\overline 1$};
\node[vertex] (G-9) at (12.0, 1) [shape = circle, draw] {\tiny $9$};
\node[vertex] (G-8) at (10.5, 1) [shape = circle, draw] {\tiny $8$};
\node[vertex] (G-7) at (9.0, 1) [shape = circle, draw] {\tiny $7$};
\node[vertex] (G-6) at (7.5, 1) [shape = circle, draw] {\tiny $6$};
\node[vertex] (G-5) at (6.0, 1) [shape = circle, draw] {\tiny $5$};
\node[vertex] (G-4) at (4.5, 1) [shape = circle, draw] {\tiny $4$};
\node[vertex] (G-3) at (3.0, 1) [shape = circle, draw] {\tiny $3$};
\node[vertex] (G-2) at (1.5, 1) [shape = circle, draw] {\tiny $2$};
\node[vertex] (G-1) at (0.0, 1) [shape = circle, draw] {\tiny $1$};
\draw (G-3).. controls +(0.5, -0.7) and +(-0.5, 0.7) ..(G--8);
\draw (G-5)--(G--4);
\draw (G-6).. controls +(0.5, -0.7) and +(-0.5, 0.7) ..(G--2);
\draw (G-7).. controls +(0.5, -0.7) and +(-0.5, 0.7) ..(G--3);
\draw (G-9)--(G--7);
\end{tikzpicture} &
 \begin{tikzpicture}[scale = 0.4,thick, baseline={(0,-1ex/2)}]
\node at (26.5,0) {$\tiny{\young{\o8 \cr \o4 \cr \o2 & \o3  & \o7 \cr & & & & \compactcdots & & \o1 & \o5 & \o6 & \o9 \cr}}$};
\end{tikzpicture} &
 \begin{tikzpicture}[scale = 0.4,thick, baseline={(0,-1ex/2)}]
\node at (26.5,0) {$\tiny{\young{6 \cr 5 \cr 3 & 7 & 9 \cr & & & & \compactcdots & & 1 & 2 & 4 & 8 \cr}}$};
\end{tikzpicture} \\ \midrule
\raisebox{-.3in}{\rule{0pt}{10ex}}$R\!B_k(n)$ &
\begin{tikzpicture}[scale = 0.4,thick, baseline={(0,-1ex/2)}]
\tikzstyle{vertex} = [shape = circle, minimum size = 7pt, inner sep = 1pt]
\node[vertex] (G--9) at (12.0, -1) [shape = circle, draw] {\tiny $\overline 9$};
\node[vertex] (G--8) at (10.5, -1) [shape = circle, draw] {\tiny $\overline 8$};
\node[vertex] (G--7) at (9.0, -1) [shape = circle, draw] {\tiny $\overline 7$};
\node[vertex] (G--6) at (7.5, -1) [shape = circle, draw] {\tiny $\overline 6$};
\node[vertex] (G--5) at (6.0, -1) [shape = circle, draw] {\tiny $\overline 5$};
\node[vertex] (G--4) at (4.5, -1) [shape = circle, draw] {\tiny $\overline 4$};
\node[vertex] (G--3) at (3.0, -1) [shape = circle, draw] {\tiny $\overline 3$};
\node[vertex] (G--2) at (1.5, -1) [shape = circle, draw] {\tiny $\overline 2$};
\node[vertex] (G--1) at (0.0, -1) [shape = circle, draw] {\tiny $\overline 1$};
\node[vertex] (G-9) at (12.0, 1) [shape = circle, draw] {\tiny $9$};
\node[vertex] (G-8) at (10.5, 1) [shape = circle, draw] {\tiny $8$};
\node[vertex] (G-7) at (9.0, 1) [shape = circle, draw] {\tiny $7$};
\node[vertex] (G-6) at (7.5, 1) [shape = circle, draw] {\tiny $6$};
\node[vertex] (G-5) at (6.0, 1) [shape = circle, draw] {\tiny $5$};
\node[vertex] (G-4) at (4.5, 1) [shape = circle, draw] {\tiny $4$};
\node[vertex] (G-3) at (3.0, 1) [shape = circle, draw] {\tiny $3$};
\node[vertex] (G-2) at (1.5, 1) [shape = circle, draw] {\tiny $2$};
\node[vertex] (G-1) at (0.0, 1) [shape = circle, draw] {\tiny $1$};
\draw (G-2).. controls +(0.5, -0.5) and +(-0.5, -0.5) ..(G-3);
\draw (G-5).. controls +(0.5, -0.8) and +(-0.5, -0.8) ..(G-8);
\draw (G--2).. controls +(0.5, 0.8) and +(-0.5, 0.8) .. (G--7);
\draw (G--4).. controls +(0.5, 0.6) and +(-0.5, 0.6) .. (G--6);
\draw (G-1)--(G--3);
\draw (G-4)--(G--1);
\draw (G-7)--(G--5);
\end{tikzpicture} &
\begin{tikzpicture}[scale = 0.4,thick, baseline={(0,-1ex/2)}]
\node at (25,0) {$\tiny{\young{\o3 \cr \o1 & \o5 \cr & & & \compactcdots & &  \o4\o6 & \o2\o7 & \o8 & \o9 \cr}}$};
\end{tikzpicture} &
\begin{tikzpicture}[scale = 0.4,thick, baseline={(0,-1ex/2)}]
\node at (25,0) {$\tiny{\young{4 \cr  1 & 7 \cr & & & \compactcdots & & 23 & 6 & 58 & 9 \cr}}$};
\end{tikzpicture} \\ \midrule
\raisebox{-.3in}{\rule{0pt}{10ex}}$T\!L_k(n)$ &
\begin{tikzpicture}[scale = 0.4,thick, baseline={(0,-1ex/2)}]
\tikzstyle{vertex} = [shape = circle, minimum size = 7pt, inner sep = 1pt]
\node[vertex] (G--9) at (12.0, -1) [shape = circle, draw] {\tiny $\overline 9$};
\node[vertex] (G--8) at (10.5, -1) [shape = circle, draw] {\tiny $\overline 8$};
\node[vertex] (G--7) at (9.0, -1) [shape = circle, draw] {\tiny $\overline 7$};
\node[vertex] (G--6) at (7.5, -1) [shape = circle, draw] {\tiny $\overline 6$};
\node[vertex] (G--5) at (6.0, -1) [shape = circle, draw] {\tiny $\overline 5$};
\node[vertex] (G--4) at (4.5, -1) [shape = circle, draw] {\tiny $\overline 4$};
\node[vertex] (G--3) at (3.0, -1) [shape = circle, draw] {\tiny $\overline 3$};
\node[vertex] (G--2) at (1.5, -1) [shape = circle, draw] {\tiny $\overline 2$};
\node[vertex] (G--1) at (0.0, -1) [shape = circle, draw] {\tiny $\overline 1$};
\node[vertex] (G-9) at (12.0, 1) [shape = circle, draw] {\tiny $9$};
\node[vertex] (G-8) at (10.5, 1) [shape = circle, draw] {\tiny $8$};
\node[vertex] (G-7) at (9.0, 1) [shape = circle, draw] {\tiny $7$};
\node[vertex] (G-6) at (7.5, 1) [shape = circle, draw] {\tiny $6$};
\node[vertex] (G-5) at (6.0, 1) [shape = circle, draw] {\tiny $5$};
\node[vertex] (G-4) at (4.5, 1) [shape = circle, draw] {\tiny $4$};
\node[vertex] (G-3) at (3.0, 1) [shape = circle, draw] {\tiny $3$};
\node[vertex] (G-2) at (1.5, 1) [shape = circle, draw] {\tiny $2$};
\node[vertex] (G-1) at (0.0, 1) [shape = circle, draw] {\tiny $1$};
\draw (G-1).. controls +(0.5, -0.5) and +(-0.5, -0.5) ..(G-2);
\draw (G-3).. controls +(0.5, -0.5) and +(-0.5, -0.5) ..(G-4);
\draw (G-7).. controls +(0.5, -0.5) and +(-0.5, -0.5) .. (G-8);
\draw (G--4).. controls +(0.5, 0.5) and +(-0.5, 0.5) .. (G--5);
\draw (G--3).. controls +(0.5, 0.8) and +(-0.5, 0.8) .. (G--6);
\draw (G--8).. controls +(0.5, 0.5) and +(-0.5, 0.5) .. (G--9);
\draw (G-5).. controls +(0.5, -0.7) and +(-0.5, 0.7) ..(G--1);
\draw (G-6).. controls +(0.5, -0.7) and +(-0.5, 0.7) ..(G--2);
\draw (G-9)--(G--7);
\end{tikzpicture} &
\begin{tikzpicture}[scale = 0.4,thick, baseline={(0,-1ex/2)}]
\node at (25,0) {$\tiny{\young{\o1 & \o2 & \o7 \cr & & & & \compactcdots & &  \o4\o5 & \o3\o6 & \o8\o9 \cr}}$};
\end{tikzpicture} &
\begin{tikzpicture}[scale = 0.4,thick, baseline={(0,-1ex/2)}]
\node at (25,0) {$\tiny{\young{5 & 6 & 9 \cr & & & & \compactcdots & & 12 & 34 & 78 \cr}}$};
\end{tikzpicture} \\ \midrule
\raisebox{-.3in}{\rule{0pt}{10ex}}$M_k(n)$ &
\begin{tikzpicture}[scale = 0.4,thick, baseline={(0,-1ex/2)}]
\tikzstyle{vertex} = [shape = circle, minimum size = 7pt, inner sep = 1pt]
\node[vertex] (G--9) at (12.0, -1) [shape = circle, draw] {\tiny $\overline 9$};
\node[vertex] (G--8) at (10.5, -1) [shape = circle, draw] {\tiny $\overline 8$};
\node[vertex] (G--7) at (9.0, -1) [shape = circle, draw] {\tiny $\overline 7$};
\node[vertex] (G--6) at (7.5, -1) [shape = circle, draw] {\tiny $\overline 6$};
\node[vertex] (G--5) at (6.0, -1) [shape = circle, draw] {\tiny $\overline 5$};
\node[vertex] (G--4) at (4.5, -1) [shape = circle, draw] {\tiny $\overline 4$};
\node[vertex] (G--3) at (3.0, -1) [shape = circle, draw] {\tiny $\overline 3$};
\node[vertex] (G--2) at (1.5, -1) [shape = circle, draw] {\tiny $\overline 2$};
\node[vertex] (G--1) at (0.0, -1) [shape = circle, draw] {\tiny $\overline 1$};
\node[vertex] (G-9) at (12.0, 1) [shape = circle, draw] {\tiny $9$};
\node[vertex] (G-8) at (10.5, 1) [shape = circle, draw] {\tiny $8$};
\node[vertex] (G-7) at (9.0, 1) [shape = circle, draw] {\tiny $7$};
\node[vertex] (G-6) at (7.5, 1) [shape = circle, draw] {\tiny $6$};
\node[vertex] (G-5) at (6.0, 1) [shape = circle, draw] {\tiny $5$};
\node[vertex] (G-4) at (4.5, 1) [shape = circle, draw] {\tiny $4$};
\node[vertex] (G-3) at (3.0, 1) [shape = circle, draw] {\tiny $3$};
\node[vertex] (G-2) at (1.5, 1) [shape = circle, draw] {\tiny $2$};
\node[vertex] (G-1) at (0.0, 1) [shape = circle, draw] {\tiny $1$};
\draw (G-1).. controls +(0.5, -0.5) and +(-0.5, -0.5) ..(G-2);
\draw (G-3).. controls +(0.5, -0.5) and +(-0.5, -0.5) ..(G-4);
\draw (G--1).. controls +(0.5, 0.7) and +(-0.5, 0.7) .. (G--3);
\draw (G-5)--(G--4);
\draw (G-7)--(G--5);
\draw (G-8)--(G--6);
\draw (G-9)--(G--8);
\end{tikzpicture} &
\begin{tikzpicture}[scale = 0.4,thick, baseline={(0,-1ex/2)}]
\node at (26,0) {$\tiny{\young{\o4 & \o5 & \o6 & \o8 \cr & & & & &  \compactcdots & & \o2 & \o1\o3 & \o7 & \o9 \cr}}$};
\end{tikzpicture} &
\begin{tikzpicture}[scale = 0.4,thick, baseline={(0,-1ex/2)}]
\node at (26,0) {$\tiny{\young{5 & 7 & 8 & 9 \cr & & & & & \compactcdots & & & 12 & 34 & 6 \cr}}$};
\end{tikzpicture} \\ \midrule
\raisebox{-.3in}{\rule{0pt}{10ex}}$P\!R_k(n)$&
\begin{tikzpicture}[scale = 0.4,thick, baseline={(0,-1ex/2)}]
\tikzstyle{vertex} = [shape = circle, minimum size = 7pt, inner sep = 1pt]
\node[vertex] (G--9) at (12.0, -1) [shape = circle, draw] {\tiny $\overline 9$};
\node[vertex] (G--8) at (10.5, -1) [shape = circle, draw] {\tiny $\overline 8$};
\node[vertex] (G--7) at (9.0, -1) [shape = circle, draw] {\tiny $\overline 7$};
\node[vertex] (G--6) at (7.5, -1) [shape = circle, draw] {\tiny $\overline 6$};
\node[vertex] (G--5) at (6.0, -1) [shape = circle, draw] {\tiny $\overline 5$};
\node[vertex] (G--4) at (4.5, -1) [shape = circle, draw] {\tiny $\overline 4$};
\node[vertex] (G--3) at (3.0, -1) [shape = circle, draw] {\tiny $\overline 3$};
\node[vertex] (G--2) at (1.5, -1) [shape = circle, draw] {\tiny $\overline 2$};
\node[vertex] (G--1) at (0.0, -1) [shape = circle, draw] {\tiny $\overline 1$};
\node[vertex] (G-9) at (12.0, 1) [shape = circle, draw] {\tiny $9$};
\node[vertex] (G-8) at (10.5, 1) [shape = circle, draw] {\tiny $8$};
\node[vertex] (G-7) at (9.0, 1) [shape = circle, draw] {\tiny $7$};
\node[vertex] (G-6) at (7.5, 1) [shape = circle, draw] {\tiny $6$};
\node[vertex] (G-5) at (6.0, 1) [shape = circle, draw] {\tiny $5$};
\node[vertex] (G-4) at (4.5, 1) [shape = circle, draw] {\tiny $4$};
\node[vertex] (G-3) at (3.0, 1) [shape = circle, draw] {\tiny $3$};
\node[vertex] (G-2) at (1.5, 1) [shape = circle, draw] {\tiny $2$};
\node[vertex] (G-1) at (0.0, 1) [shape = circle, draw] {\tiny $1$};
\draw (G-2)--(G--1);
\draw (G-4)--(G--2);
\draw (G-5)--(G--4);
\draw (G-7)--(G--5);
\draw (G-8)--(G--7);
\draw (G-9)--(G--8);
\end{tikzpicture} &
\begin{tikzpicture}[scale = 0.4,thick, baseline={(0,-1ex/2)}]
\node at (26,0) {$\tiny{\young{\o1 & \o2 & \o4 & \o5  & \o7  & \o8 \cr & & & & & & & \compactcdots & & \o3 & \o6 & \o9 \cr}}$};
\end{tikzpicture} &
\begin{tikzpicture}[scale = 0.4,thick, baseline={(0,-1ex/2)}]
\node at (26,0) {$\tiny{\young{2 & 4 & 5 & 7 & 8 & 9 \cr & & & & & & &  \compactcdots & & 1 & 3 & 6 \cr}}$};
\end{tikzpicture} \\ \midrule
\raisebox{-.3in}{\rule{0pt}{10ex}}$P\!P_k(n)$ &
\begin{tikzpicture}[scale = 0.4,thick, baseline={(0,-1ex/2)}]
\tikzstyle{vertex} = [shape = circle, minimum size = 7pt, inner sep = 1pt]
\node[vertex] (G--9) at (12.0, -1) [shape = circle, draw] {\tiny $\overline 9$};
\node[vertex] (G--8) at (10.5, -1) [shape = circle, draw] {\tiny $\overline 8$};
\node[vertex] (G--7) at (9.0, -1) [shape = circle, draw] {\tiny $\overline 7$};
\node[vertex] (G--6) at (7.5, -1) [shape = circle, draw] {\tiny $\overline 6$};
\node[vertex] (G--5) at (6.0, -1) [shape = circle, draw] {\tiny $\overline 5$};
\node[vertex] (G--4) at (4.5, -1) [shape = circle, draw] {\tiny $\overline 4$};
\node[vertex] (G--3) at (3.0, -1) [shape = circle, draw] {\tiny $\overline 3$};
\node[vertex] (G--2) at (1.5, -1) [shape = circle, draw] {\tiny $\overline 2$};
\node[vertex] (G--1) at (0.0, -1) [shape = circle, draw] {\tiny $\overline 1$};
\node[vertex] (G-9) at (12.0, 1) [shape = circle, draw] {\tiny $9$};
\node[vertex] (G-8) at (10.5, 1) [shape = circle, draw] {\tiny $8$};
\node[vertex] (G-7) at (9.0, 1) [shape = circle, draw] {\tiny $7$};
\node[vertex] (G-6) at (7.5, 1) [shape = circle, draw] {\tiny $6$};
\node[vertex] (G-5) at (6.0, 1) [shape = circle, draw] {\tiny $5$};
\node[vertex] (G-4) at (4.5, 1) [shape = circle, draw] {\tiny $4$};
\node[vertex] (G-3) at (3.0, 1) [shape = circle, draw] {\tiny $3$};
\node[vertex] (G-2) at (1.5, 1) [shape = circle, draw] {\tiny $2$};
\node[vertex] (G-1) at (0.0, 1) [shape = circle, draw] {\tiny $1$};
\draw (G-3).. controls +(0.5, -0.5) and +(-0.5, -0.5) ..(G-4);
\draw (G-1).. controls +(0.5, -0.7) and +(-0.5, -0.7) ..(G-3);
\draw (G-4).. controls +(0.5, -0.5) and +(-0.5, 0.5) .. (G--1);
\draw (G-1)--(G--1);
\draw (G--2).. controls +(0.5, 0.5) and +(-0.5, 0.5) .. (G--3);
\draw (G--3).. controls +(0.5, 0.5) and +(-0.5, 0.5) .. (G--4);
\draw (G--5).. controls +(0.5, 0.5) and +(-0.5, 0.5) .. (G--6);
\draw (G--6).. controls +(0.5, 0.5) and +(-0.5, 0.5) .. (G--7);
\draw (G-5)--(G--5);
\draw (G-5).. controls +(0.5, -0.5) and +(-0.5, 0.5) .. (G--7);
\draw (G-8).. controls +(0.5, -0.5) and +(-0.5, -0.5) ..(G-9);
\draw (G-7)--(G--9);
\end{tikzpicture} &
\begin{tikzpicture}[scale = 0.4,thick, baseline={(0,-1ex/2)}]
\node at (26,0) {$\tiny{\squaresize=12pt\young{\o1 & \o5\o6\o7 & \o9 \cr & & & & \compactcdots & &  & \o2\o3\o4 & \o8 \cr}}$};
\end{tikzpicture} &
\begin{tikzpicture}[scale = 0.5,thick, baseline={(0,-1ex/2)}]
\node at (26,0) {$\tiny{\squaresize=12pt\young{134 & 5 & 7 \cr & & & & \compactcdots & & 2 & 6 & 89\cr}}$};
\end{tikzpicture} \\ \bottomrule
\end{tabular}}
\end{center}
\end{table}

In addition, we obtain the following corollary of Theorem~\ref{thm:settableaux}
and Lemma~\ref{lemma:tableauxdesc}.

\begin{cor}
\label{alg-dim}
If $n \geqslant 2k$, then for each subalgebra $A_k$ of the partition algebra
$P_k(n)$ described in Table~\ref{tab:subalgebra}, we have
 \[
     \dim(A_k) = \sum_{\lambda \vdash n} \big(\# \SMT_{A_k}(\lambda)\big)^2,
 \]
where the dimension of $A_k$ is also given in Table~\ref{tab:subalgebra}.
\end{cor}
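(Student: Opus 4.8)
The plan is to count the basis of $A_k$ through the bijection of Theorem~\ref{thm:settableaux} and to recognize its restriction to the diagrams spanning $A_k$ as a product of two copies of $\SMT_{A_k}(\lambda)$. First I would record that, by definition, the diagrams spanning $A_k$ listed in Table~\ref{tab:subalgebra} form a basis of $A_k$ (they are a subset of the diagram basis of $P_k(n)$, hence linearly independent), so that $\dim(A_k)$ equals the number of such diagrams, the quantity appearing in the last column of Table~\ref{tab:subalgebra}. Thus it suffices to enumerate these diagrams.

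Next I would invoke the bijection $\pi \mapsto (T,S)$ of Theorem~\ref{thm:settableaux}, which is defined on all set partitions of $[k]\cup[\o{k}]$ and is therefore in particular injective on the sub-collection of diagrams spanning $A_k$. The heart of the argument is to identify its image precisely as the set of pairs $(T,S)$ with $T,S \in \SMT_{A_k}(\lambda)$ for a common shape $\lambda \vdash n$. The inclusion ``$\subseteq$'' is exactly Lemma~\ref{lemma:tableauxdesc}: a diagram spanning $A_k$ produces a pair whose two entries both satisfy the properties in Table~\ref{tab:characterisation_of_Ak-SMT}, and hence both lie in $\SMT_{A_k}$. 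That $T$ and $S$ satisfy the \emph{same} side-conditions (rather than two unrelated ones) is guaranteed by the flip-symmetry of Proposition~\ref{prop:properties-partitions}(b): since $\operatorname{flip}$ preserves each diagram class in Table~\ref{tab:subalgebra} and sends $(T,S)$ to $(S,T)$, applying Lemma~\ref{lemma:tableauxdesc} to $\operatorname{flip}(\pi)$ forces $S \in \SMT_{A_k}$ as well.

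For the reverse inclusion ``$\supseteq$'' --- the step I expect to be the main obstacle --- I would reconstruct $\pi$ from an arbitrary pair $(T,S)$ with $T,S \in \SMT_{A_k}(\lambda)$ and verify that $\pi$ belongs to the diagram class defining $A_k$. Concretely, the non-empty cells of the first rows of $T$ and $S$ recover the non-propagating blocks in $[\o{k}]$ and in $[k]$, while stripping these rows and applying inverse RSK to the remaining (equal-shape) tableaux recovers the propagating blocks. One then checks case by case that the constraints imposed on $T$ and $S$ translate back into the defining constraints on $\pi$: the matching condition forces all blocks to have size at most two, the first-row size constraints fix the sizes of the non-propagating blocks, and the planarity condition on the tableaux forces $\pi$ to be drawable without crossings. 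The delicate cases are the planar subalgebras $P\!P_k(n)$, $T\!L_k(n)$, $M_k(n)$, and $P\!R_k(n)$, where planarity of $\pi$ is a global condition coupling the two sides; here I would rely on the analysis already carried out in the proof of Lemma~\ref{lemma:tableauxdesc}, which shows that the non-crossing and ``betweenness'' conditions on $T$ and on $S$ are together equivalent to planarity of $\pi$. This is what licenses choosing $T$ and $S$ independently, subject only to sharing the shape $\lambda$.

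Finally, the resulting product structure yields
\[
    \dim(A_k)
    = \#\{\pi \text{ spanning } A_k\}
    = \sum_{\lambda \vdash n} \#\big(\SMT_{A_k}(\lambda) \times \SMT_{A_k}(\lambda)\big)
    = \sum_{\lambda \vdash n} \big(\#\SMT_{A_k}(\lambda)\big)^2,
\]
as claimed; specializing to $A_k = P_k(n)$ recovers Equation~\eqref{eq:bellsquares}.
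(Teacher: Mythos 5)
Your argument is correct and is essentially the route the paper intends: the paper derives Corollary~\ref{alg-dim} directly from Theorem~\ref{thm:settableaux} together with Lemma~\ref{lemma:tableauxdesc}, exactly as you do, with the surjectivity onto $\SMT_{A_k}(\lambda)\times\SMT_{A_k}(\lambda)$ that you single out being subsumed in the word ``characterized'' in that lemma. Your explicit treatment of the reverse inclusion (reconstructing $\pi$ from an arbitrary same-shape pair and checking it lies in the diagram class, with the planar cases handled via the non-crossing and betweenness analysis) is a faithful expansion of what the paper leaves implicit, not a different proof.
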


\subsection{From standard multiset tableaux to Bratteli diagrams}
\label{section.bratteli}
Let $A_k$ denote one of the subalgebras from Lemma~\ref{lemma:tableauxdesc}.
We establish a bijection between the standard multiset tableaux for
$A_k$ and the paths in the Bratteli diagram for $A_0 \subseteq A_1 \subseteq A_2 \subseteq \cdots$.

A \defn{Bratteli diagram} associated to a tower of algebras
$A_0 \subseteq A_1 \subseteq A_2 \subseteq \cdots$
is an infinite $\NN$-graded graph defined as follows.
The vertices at level $k \in \NN$ are in bijection with the isomorphism classes
of the irreducible representations of $A_k$; if the irreducible representations
are parameterized by some index set, then we label the vertices by the elements
of the index set. Note that it is possible that vertices at different levels
carry the same label (this happens for some of the index sets listed in
Table~\ref{tab:irreducibles}), but the associated representations are
different.
The edges in the graph connect vertices of level $k$ with vertices at level $k+1$:
the number of edges from
the vertex associated with an irreducible $A_{k}$-representation $V$
to
the vertex associated with an irreducible $A_{k+1}$-representation $V'$
is the multiplicity of
$V$ in the restriction of $V'$ to $A_{k}$.

In all the examples we consider, there is exactly one irreducible
$A_0$-representation and it is of dimension $1$. It follows from an induction
argument that the dimension of an irreducible representation $V$ is equal to
the number of paths in the Bratteli diagram from the unique level-$0$ vertex to
the vertex associated with $V$.

\begin{example}Young's lattice is an example of a Bratteli diagram
for the tower of symmetric group algebras ${\mathbb C}S_0 \subseteq {\mathbb C}S_1
\subseteq {\mathbb C}S_2 \subseteq \cdots$.
Indeed, recall that there is exactly one edge in Young's lattice
\defn{$\mu \rightarrow \lambda$} if and only if $\mu$ is obtained from $\lambda$ by
removing a corner cell. And, the multiplicity of the irreducible
$S_{k}$-representation indexed by $\mu$ in the restriction to $\CC S_{k}$ of
the irreducible $S_{k+1}$-representation indexed by $\lambda$ is equal to $1$
if $\mu \rightarrow \lambda$ in Young's lattice and is equal to $0$ otherwise.
\end{example}

By \defn{branching rule}, we mean any combinatorial description of the edge
multiplicities in the Bratteli diagram in terms of the index sets of the
irreducible representations.
Table~\ref{tab:branching} summarizes the branching
rule for various subalgebras of the partition algebra, where the index
sets for the irreducible representations are given in Table~\ref{tab:irreducibles}.

\begin{table}[b]
\begin{minipage}{\linewidth}
\centering
\captionof{table}{Branching rules for various subalgebras of the partition algebra} \label{tab:branching}
\begin{tabular}{p{15mm}p{0.5\textwidth}p{45mm}}
\toprule
$A_k$    & Branching rule for $\lambda$ of $A_k$ to $\mu$ of $A_{k-1}$       & Reference \\ \midrule
$P_k(n)$ \newline $P\!P_k(n)$ &
{remove $1$ or $0$ cells from $\lambda$ to get $\tau$, \newline then add $1$ or $0$ cells to $\tau$ to get $\mu$} &
\cite[Equation (1.4.1)]{Hal} \newline \cite[Section 1]{HalRam} \\
\cmidrule(rl){1-3}
$\mathbb{C}S_k $ & remove a cell from $\lambda$ to get $\mu$ & \\
\cmidrule(rl){1-3}
$B_k(n)$ & \multirow{2}{*}{add or remove a cell from $\lambda$ to get $\mu$}                  & \cite[p. 192]{Wenzl} \\
$T\!L_k(n)$ & & \cite[p. 19]{Jones1} \\
\cmidrule(rl){1-3}
$R_k(n)$    & \multirow{2}{*}{remove $1$ or $0$ cells from $\lambda$ to get $\mu$} & \cite[Sec. 3.1]{Halverson} \\
$P\!R_k(n)$ & & \cite[Equation (4)]{FHH} \\
\cmidrule(rl){1-3}
$R\!B_k(n)$ & \multirow{2}{*}{add or remove $1$ or $0$ cells from $\lambda$ to get $\mu$} & \cite[Equation (3.4)]{HD} \\
$M_k(n)$ & & \cite[Equation (3.12)]{BH-motzkin} \\
\bottomrule
\end{tabular}
\end{minipage}
\end{table}

\begin{remark}
A proof that the planar algebra $P\!P_k(n)$ is isomorphic to the Temperley--Lieb algebra $T\!L_{2k}(n)$
can be found in \cite[Section 1]{HalRam}. Consequently, the branching rule for $P\!P_k(n)$
is obtained by a repeated application of the branching rule for $T\!L_{2k}(n)$.

Paths in the Brauer algebra Bratteli diagram are often called updown tableaux or oscillating
tableaux in the literature; see \cite{HalLew} and the references therein.
\end{remark}

\begin{prop}\label{prop:onestep}
Let $k$ be a positive integer and $\lambda$ a partition of $n \geqslant 2k$.
There is a bijection
\[
    \phi \colon \SMT(\lambda, k) \to
    \Big\{ \big(S, \tau\big) ~\Big|~
    S \in \SMT(\mu, k-1), \,
    \tau \rightarrow \mu, \,
    \tau \rightarrow \lambda\Big\}~.
\]
\end{prop}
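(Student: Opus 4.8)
The plan is to realize $\phi$ as deletion of the largest letter $k$ from the content, and to prove that it is a bijection by exhibiting the reverse operation, which reinserts $k$, letting the reversibility of RSK (Theorem~\ref{th:oldRSK}) do the bookkeeping. First I would record the structural features of $T \in \SMT(\lambda,k)$ on which everything rests. Because the content $[k]$ is an honest set, the nonempty entries of $T$ are pairwise disjoint, hence totally ordered in the last letter order by their (distinct) maxima. The empty set is the minimum; since columns are \emph{strictly} increasing, two empty cells can never be vertically adjacent, and since the empty cells form a left prefix of each row, this forces every empty cell to lie in the first (longest) row. Finally $k$, being the global maximum, sits in a cell $c$ that is both the rightmost cell of its row and the bottom cell of its column, i.e. a removable corner of $\lambda$.

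Given this, I set $\tau := \lambda \setminus \{c\}$, so that $\tau \to \lambda$ holds automatically, and I build $S$ by deleting $k$. There are two regimes. If erasing $k$ from the block in $c$ still yields a legal standard multiset tableau of shape $\lambda$ --- that is, if the residue $c \setminus \{k\}$ is nonempty and still exceeds its left and upper neighbours in the last letter order, or if emptying $c$ leaves a legal first row --- then I take $S$ to be this tableau and put $\mu := \lambda = \tau \cup \{c\}$. Otherwise I remove the cell $c$ outright, row-insert the residue $c \setminus \{k\}$ back into what remains (inserting nothing when the residue is empty), and append one empty cell to the first row so that $S$ again has $n$ cells. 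In either regime the content of $S$ partitions $[k-1]$, all of its empty cells again lie in the first row, and exactly one cell has been created relative to $\tau$; thus $S \in \SMT(\mu,k-1)$ with $\mu \vdash n$ and $\tau \to \mu$. This shows $\phi$ lands in the asserted target set.

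For the inverse I would reintroduce $k$. Given $(S,\tau)$, the cells $d_\lambda = \lambda \setminus \tau$ and $d_\mu = \mu \setminus \tau$ are determined; reverse row-insertion removes the corner $d_\mu$ from $S$ (discarding the padding cell when the residue was empty), returning to shape $\tau$, after which the ejected data together with $k$ are inserted so as to create the corner $d_\lambda$, producing a tableau of shape $\lambda$ whose corner $d_\lambda$ carries $k$. Since forward and reverse RSK insertion are mutually inverse and $k$ always occupies the distinguished maximal corner, these procedures undo one another, giving the claimed bijection. Observe that the condition $\tau \to \lambda$, $\tau \to \mu$ is exactly the ``remove a cell, then add a cell'' shape of the partition-algebra branching, with the long first row absorbing the ``remove/add nothing'' cases.

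The step I expect to be the main obstacle is the reconciliation of the two regimes. Because $k$ is maximal, inserting it can never bump any entry downward, so the shape change from $\mu$ to $\lambda$ is produced not by $k$ itself but by the re-sorting of the residue $c \setminus \{k\}$ (whose maximum has dropped below that of its former neighbours) together with the migration of a single empty cell into or out of the first row. The delicate points will be: showing the ``keep in place'' and ``delete--reinsert'' regimes are mutually exclusive and exhaustive; checking that each yields precisely one removable and one addable corner, so that the covering data $\tau \to \lambda$ and $\tau \to \mu$ come out correctly; and verifying that the residual reinsertion is a genuine RSK step, so that it is undone by the corresponding reverse step while the first-row empty-cell bookkeeping stays consistent.
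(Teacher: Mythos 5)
Your construction of $\phi$ is genuinely different from the paper's, and unfortunately it is not a bijection. The paper \emph{never} uses your ``keep in place'' regime: it always deletes the entire cell containing $k$, and then either (i) adds a blank cell to the first row when the block containing $k$ is the singleton $\{k\}$, or (ii) row-inserts the residue $A\setminus\{k\}$ into the \emph{second} row when $A\neq\{k\}$. This makes the two cases detectable from the output: the inverse branches on whether the cell $\mu/\tau$ lies in the first row. Your case division (``does erasing $k$ leave a legal tableau of shape $\lambda$?'') is not visible from the pair $(S,\tau)$, and the images of your two regimes overlap. Concretely, take $k=2$, $n=4$, $\lambda=(4)$. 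The two elements of $\SMT((4),2)$ are the single-row tableaux with entries $\emptyset,\emptyset,\emptyset,\dcl 1,2\dcr$ and $\emptyset,\emptyset,\dcl 1\dcr,\dcl 2\dcr$. On the first, erasing $2$ leaves the legal tableau $\emptyset,\emptyset,\emptyset,\dcl 1\dcr$, so your regime~1 outputs $\big(\emptyset,\emptyset,\emptyset,\dcl1\dcr,\;(3)\big)$. On the second, erasing $2$ leaves $\emptyset,\emptyset,\dcl1\dcr,\emptyset$, which is illegal, so your regime~2 deletes the cell and restores an empty cell in the first row, outputting the \emph{same} pair $\big(\emptyset,\emptyset,\emptyset,\dcl1\dcr,\;(3)\big)$. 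Thus $\phi$ is not injective, and the pair consisting of the shape-$(3,1)$ tableau with $\dcl 1\dcr$ in its second row together with $\tau=(3)$ is never attained, so it is not surjective either. (The paper's map sends the first tableau to exactly that missing pair, by inserting $\dcl 1\dcr$ into the second row.)

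There are two further problems even within your regime~2. First, the cell count: when the residue is nonempty you both row-insert it (adding a cell) and append an empty cell to the first row (adding another), so $S$ would have $n+1$ cells; the paper only ever adds one cell, namely the blank first-row cell in the $A=\{k\}$ case or the cell created by inserting $A\setminus\{k\}$ into the second row in the other case. Second, your inverse step ``the ejected data together with $k$ are inserted so as to create the corner $d_\lambda$'' is not a well-defined operation---RSK insertion creates whichever corner it creates. The paper instead simply \emph{places} $A'\cup\{k\}$ at the prescribed corner $\lambda/\tau$, which is legitimate precisely because any set containing $k$ is maximal in last letter order and so may occupy any removable corner. Your structural observations (the empty cells fill a prefix of the first row, the cell containing $k$ is a removable corner, $\tau\to\lambda$ records which corner) are correct and match the paper; the failure is in how $S$ is built from the residue.
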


\begin{proof}
Let $T$ be an element of $\SMT(\lambda,k)$.
Let $A$ denote the unique set appearing in $T$ containing $k$.
Because we are using last letter order, the cell labelled by $A$ is a corner
cell.
Let $T'$ be the tableau obtained from $T$ by deleting $A$.
If $A \neq \{k\}$, then let $S$ be the tableau obtained by inserting $A
\setminus \{k\}$ in the second row of $T'$ using the RSK insertion procedure.
If $A = \{k\}$, then let $S$ be the tableau obtained from $T'$ by adding
a blank cell at the beginning of its first row.
Set $\phi(T) = (S,\tau)$, where $\tau$ is the shape of $T'$.
Note that $\tau \rightarrow \lambda$.

Conversely, let $(S, \tau)$ be such that $S \in \SMT(\mu, k-1)$,
$\tau \rightarrow \mu$ and $\tau \rightarrow \lambda$.
If the unique cell in $\mu / \tau$ is in the first row, then let
$T$ be the tableau obtained from $S$ by removing a blank cell from the first
row of $S$, and then adding a cell labelled $\{k\}$ at the cell in $\lambda/\tau$.

Otherwise, let $S'$ denote the tableau obtained from $S$ by deleting its first
row. Reverse the RSK insertion procedure starting with the cell $\mu/\tau$ to
produce a tableau $T'$ and a set $A'$ such that inserting $A'$ into $T'$
produces $S'$.
Let $T$ be the tableau obtained from $T'$ by adjoining the first row of $S$ and
adding a new cell labelled $A' \cup \{k\}$ at the cell in $\lambda/\tau$.
\end{proof}

This correspondence is particularly useful because it respects the properties
characterizing the tableaux in $\SMT_{A_k}(\lambda)$ (see Table~\ref{tab:characterisation_of_Ak-SMT}).

\begin{example}\label{ex:brauertab} Consider the following tableau in $\SMT((n-3,2,1), 9)$,
\squaresize=12pt
\[
    T=\raisebox{-.2in}{\tiny{ \young{7\cr 5 & 8 \cr & & & \compactcdots& & 12 & 46 & 39 \cr}}}~,
\]
in particular, it is an element of $\SMT_{B_9(n)}((n-3,2,1))$.
To compute $\phi(T) = (S, \tau)$, we remove the cell labelled $\{3,9\}$
and insert $\{3\}$ in the second row, obtaining
\[
    S=\raisebox{-.25in}{\tiny{ \young{7\cr 5\cr 3 & 8 \cr & & & \compactcdots& & 12 & 46\cr}}}
\]
where $S \in \SMT_{B_8(n)}((n-4,2,1,1))$ and $\tau = (n-4,2,1)$.
Proposition~\ref{prop:onestep} also states that $T$ can be recovered from
$S$ and the partitions $\tau = (n-4,2,1)$ and $\lambda = (n-3, 2, 1)$.
\end{example}

Now we are ready for the main result of this section, which states that the standard multiset tableaux
in $\SMT_{A_k}(\lambda)$ encode the branching rule for the subalgebra $A_k$.

\begin{theorem}\label{th:edges}
Let $A_k$ be any of the subalgebras of $P_k(n)$ defined in Table~\ref{tab:subalgebra}
with $n \geqslant 2k$, and $\lambda,\mu \vdash n$.
\begin{enumerate}
    \item
        If $T \in \SMT_{A_{k}}(\lambda)$ and $\phi(T)= (S, \tau)$, then
        $S \in \SMT_{A_{k-1}}(\mu)$.

    \item
        For each $S \in \SMT_{A_{k-1}}(\mu)$,
        the number of $T \in \SMT_{A_{k}}(\lambda)$
        such that $\phi(T) = (S,\tau)$ for some partition $\tau$
        is equal to the number of edges from $\o\mu$ to $\o\lambda$ in the Bratteli diagram
        for the tower of algebras $A_0 \subseteq A_1 \subseteq A_2 \subseteq \cdots$.
\end{enumerate}
\end{theorem}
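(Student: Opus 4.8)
The plan is to reduce everything to a single local analysis of the map $\phi$ of Proposition~\ref{prop:onestep}, and then to read off each branching rule of Table~\ref{tab:branching}. Write $\phi(T) = (S, \tau)$ and let $A$ be the $<$-maximal entry of $T$, i.e.\ the corner cell containing $k$; recall that $\tau$ is the shape of $T$ after deleting $A$, that $\mu$ is the shape of $S$, and that the operation $\lambda \mapsto \o\lambda$ deletes the first row. The first step is to record the dictionary that is immediate from the definition of $\phi$: the deleted cell $\lambda/\tau$ lies in the first row precisely when $\o\lambda = \o\tau$, while the created cell $\mu/\tau$ lies in the first row precisely when $A = \{k\}$ (the case in which $\phi$ literally adjoins a blank first-row cell). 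Combining these, the four cases ``$A$ in the first row or not'' and ``$A = \{k\}$ or not'' yield, respectively: $\o\mu = \o\lambda$; $\o\mu$ equal to $\o\lambda$ with a cell added; $\o\mu$ equal to $\o\lambda$ with a cell removed; and $\o\mu$ obtained from $\o\lambda$ by removing and then adding a cell, with $\o\tau$ serving as the intermediate partition. This is exactly the ``remove $0$ or $1$ cell, then add $0$ or $1$ cell'' description.

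This settles the case $A_k = P_k(n)$ at once: part~(1) is vacuous, and since $\phi$ is a bijection, for fixed $S$ and $\lambda$ the relevant $T$ are indexed by the admissible $\tau$; the assignment $\tau \mapsto \o\tau$ identifies these with the branching intermediates from $\o\mu$ to $\o\lambda$. Here the hypothesis $n \geqslant 2k$ is used: whenever $\o\lambda$ (resp.\ $\o\mu$) is a valid index for $A_k$ (resp.\ $A_{k-1}$), so that $|\o\lambda| \leqslant k$ (resp.\ $|\o\mu| \leqslant k-1$), the first row of the tableau is strictly longer than the second and contains empty cells, so a first-row cell can always be removed or adjoined; the only configurations ruled out by a lack of room are those producing an index too large to label a vertex at the appropriate level, which the branching rule excludes anyway.

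For part~(1) in general I would check that $\phi$ and its inverse preserve the properties of Table~\ref{tab:characterisation_of_Ak-SMT}. The property ``matching'' is straightforward: since $A$ is $<$-maximal, in a matching tableau $A$ is a singleton when it lies beyond the first row and has size at most $2$ when it lies in the first row; deleting $A$ and re-inserting $A \setminus \{k\}$ (a set of size at most $1$) into the second row, or adjoining a blank first-row cell, keeps every non-first-row entry of size $1$ and every first-row entry of size at most $2$, and the inverse construction reverses this. Tracking $|A|$ through the four cases then produces exactly the permitted first-row sizes of Table~\ref{tab:characterisation_of_Ak-SMT} --- empty for $\CC S_k$, in $\{0,2\}$ for $B_k(n)$, in $\{0,1\}$ for $R_k(n)$, and in $\{0,1,2\}$ for $R\!B_k(n)$ --- and hence, via the dictionary above, the branching rules of Table~\ref{tab:branching} for these four algebras.

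The property ``planar'' is the step I expect to be the main obstacle. A planar tableau has exactly two rows, so I must show that inserting $A \setminus \{k\}$ into the second row appends a cell rather than bumping into a forbidden third row, and that the non-crossing condition on the first row and the betweenness condition on the second row are preserved (with the dual statement for $\phi^{-1}$, whose reverse-bumping path must likewise remain within two rows). The key is again the $<$-maximality of $A$ together with the planar structure of the underlying diagram: the non-propagating (first-row) blocks are pairwise non-crossing and the propagating blocks are inserted in increasing order, which forces $A \setminus \{k\}$ to be weakly larger than every current second-row entry and forces the deletion of the maximal cell to create neither a new crossing nor a new betweenness violation. Granting this, the planar subalgebras $P\!P_k(n)$, $T\!L_k(n)$, $M_k(n)$, $P\!R_k(n)$ follow by intersecting planarity with the matching/first-row-size analysis above; since their irreducibles are indexed by one-row partitions, the four cases collapse to $\o\lambda = (m)$ mapping to $(m)$, $(m{+}1)$, or $(m{-}1)$, matching Table~\ref{tab:branching}. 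Finally, assembling part~(2): for fixed $S \in \SMT_{A_{k-1}}(\mu)$ and fixed $\lambda$, part~(1) gives a bijection (via $\phi$) between the $T \in \SMT_{A_k}(\lambda)$ with $\phi(T) = (S, \tau)$ and the admissible $\tau$ compatible with the $A_k$-constraints, and $\tau \mapsto \o\tau$ carries these bijectively onto the edges from $\o\mu$ to $\o\lambda$ in the Bratteli diagram, which is the assertion.
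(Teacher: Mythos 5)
Your overall architecture is the same as the paper's: analyze $\phi$ locally at the maximal entry $A$, record when the deleted cell $\lambda/\tau$ and the created cell $\mu/\tau$ land in the first row, and translate the four resulting cases into the ``remove $0$ or $1$, add $0$ or $1$'' branching rules of Table~\ref{tab:branching}. Your treatment of part~(2), of the matching property, and of the first-row size constraints matches the paper's proof essentially step for step.

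However, there is a genuine gap exactly where you predicted one: the preservation of planarity. You write ``Granting this'' and never supply the argument that (i) inserting $A \setminus \{k\}$ into the second row merely appends a cell rather than bumping, and (ii) the deletion of $A$ and the re-insertion create no new crossing or betweenness violation. Moreover, the justification you gesture at goes through ``the planar structure of the underlying diagram,'' i.e.\ back through the insertion of Theorem~\ref{thm:settableaux}; but $\SMT_{A_k}(\lambda)$ is characterized intrinsically (Table~\ref{tab:characterisation_of_Ak-SMT}), and $\phi$ is defined on tableaux, so this route would require extra work to show that $\phi$ is compatible with an operation on diagrams. The paper instead argues directly from the tableau definition of planar: if $A$ lies in the second row and $a = \max(A\setminus\{k\})$, then every element of $\{a+1,\dots,k-1\}$ must belong to a first-row set (otherwise it would be between $A$), so every other second-row set has maximum at most $a$ and $A\setminus\{k\}$ is weakly largest in the last letter order, whence the insertion appends; if $A$ lies in the first row, the same betweenness condition shows no second-row set contains an element strictly between $\max(A\setminus\{k\})$ and $k$, with the same conclusion. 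The absence of new violations is then checked by contradiction, splitting on whether the offending set $B$ sits in the first row (producing a crossing of first-row sets in $T$) or the second row (producing an element of a second-row set between $A$ in $T$). You should supply this argument (or an equivalent one) for the claim to be complete; the analogous check for $\phi^{-1}$, which you also flag, is needed for Proposition~\ref{prop:onestep} to restrict to a bijection but is not actually invoked in the statement of Theorem~\ref{th:edges} itself.
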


\begin{proof}
(1)
We first verify that if $T$ is planar (respectively, matching) and
$\phi(T)=(S,\tau)$, then $S$ is also planar (respectively, matching).

\smallskip

Let $T$ be a planar tableau and let $A$ denote the set in $T$ that contains $k$.
If $A = \{k\}$, then $S$ is obtained from $T$ by deleting the cell labelled $A$
and adding a blank cell to the first row.
Since $T$ is planar, all the sets appearing in $T$ satisfy the conditions in
the definition of planar, and so $S$ is also planar.

Suppose $A \neq \{k\}$ and that $A$ appears in the second row of $T$.
Let $a$ be the largest element in $A \setminus \{k\}$.
Then $a+1, a+2, \ldots, k-1$ must be in the first row of $T$ (otherwise,
these elements are between $A$, contradicting that $T$ is planar).
Therefore, $A \setminus \{k\}$ is greater than all the sets in the second row
of $T$ in the last letter order. Thus, $S$ is obtained from $T$ by deleting
$k$, and it follows that $S$ is planar.

Suppose $A \neq \{k\}$ and that $A$ appears in the first row of $T$.
If one of the sets in the second row of $T$ contains $c \in [k]$ satisfying
$\max(A \setminus \{k\}) < c < k$, then $c$ is between $A$, which
contradicts the hypothesis that $T$ is planar. Hence, $A \setminus \{k\}$ is
greater than all sets appearing in the second row of $T$,
and so $S$ is obtained from $T$ by deleting the cell labelled $A$
and appending $A \setminus \{k\}$ to the second row.
To prove that $S$ is planar, it remains to show that no element of $A \setminus
\{k\}$ is between any other set in $S$.

Suppose there exists $b \in A \setminus \{k\}$ that is between some set $B$.
Then there exist $a, c \in B$ such that $a < b < c < k$.
If $B$ is in the first row of $S$, then $A$ and $B$ are crossing, which
contradicts the fact that the sets in the first row of $T$ are pairwise
non-crossing.
If $B$ is in the second row of $S$, then $c$ is between $A$, which contradicts
the fact that no element belonging to the second row of $T$ is between any
set in the tableau.
Hence, $S$ is also planar.

\smallskip

Let $T$ be a matching tableau and let $A$ denote the set in $T$ that contains
$k$.
If $A = \{k\}$, then $S$ is obtained from $T$ by deleting the cell labelled $A$
and adding a blank cell to the first row.
Since $T$ is matching, all the sets appearing in $T$ satisfy the conditions in
the definition of matching, and so $S$ is also matching.

Suppose $A \neq \{k\}$. Then $A$ is a set of size $2$, say $A = \{a, k\}$,
and it appears in the first row of $T$. Then $S$ is obtained from $T$
by deleting $A$ and inserting $\{a\}$ into the second row using the RSK
algorithm. Thus, all sets in $S$ are of size at most $2$, and the sets of size
$2$ belong to its first row. Hence, $S$ is matching.

\smallskip

Finally, note that if the sizes of the sets in the first row of $T$ are
constrained to be in some set $\varSigma$ that contains $0$, then the same
is true for the sets in the first for of $S$: indeed, $S$ is obtained from $T$
by deleting a set and either adding a blank cell in the first row or by adding
a non-empty cell in some row besides the first row.

\medskip

(2)
Next we check that for a fixed $S \in \SMT_{A_{k-1}}(\mu)$,
$$\# \{ T \in \SMT_{A_{k}}(\lambda) \mid \phi(T) = (S,\tau) \hbox{ for some }\tau\}$$
is the multiplicity of $V^{\o\mu}_{A_{k-1}}$ in the restriction
of $V^{\o\lambda}_{A_k}$ to $A_{k-1}$ described in Table~\ref{tab:branching}.
We will do this on a case by case basis for each of the four pairs of subalgebras.
Throughout this proof, let $S \in \SMT_{A_{k-1}}(\mu)$ and let $T \in
\SMT_{A_{k}}(\lambda)$ be such that $\phi(T) = (S, \tau)$ for some $\tau$.

\smallskip
\emph{Let $A_k$ be either $R_k(n)$ or $P\!R_k(n)$.}
Note that all the sets appearing in $S$ are of size at most $1$,
and $S$ is obtained from $T$ by deleting
the cell labelled $k$ and adding an empty cell to the first row.
The cell labelled $k$ is removed from the first row if and only if $\o\lambda = \o\mu$.
And if the cell is removed from some other row, then $\o\mu$ is obtained from
$\o\lambda$ by deleting a cell.
This agrees with the branching rule in Table~\ref{tab:branching}
with $\lambda$ replaced by $\o{\lambda}$ and $\mu$ replaced by $\o{\mu}$.

\smallskip
\emph{Let $A_k$ be $B_k(n)$ or $T\!L_k(n)$.}
The non-empty
sets in the first row of $S$ are all of size $2$ and the sets in the other rows
are all of size $1$.
If the set of $T$ containing $k$ is $\{k\}$, then it appears
in the second row or above of $T$.
In this case, $S$ is obtained from $T$ by deleting the cell labelled $\{k\}$
and adding an empty cell in the first row.
Thus, $\mu$ is obtained from $\lambda$ by moving a cell to the first row,
or in other words, $\o{\mu}$ is obtained from $\o{\lambda}$ by deleting a cell.

Otherwise, the set containing $k$ is $\{a,k\}$, for some $a$, and it appears at
the end of the first row. Then $S$ is obtained from $T$ by deleting
$\{a,k\}$ and inserting $\{a\}$ in the second row.
Thus, $\mu$ is obtained from $\lambda$ by removing a cell from the first row
and adding a cell to some other row.
In other words, $\o{\mu}$ is obtained from $\o{\lambda}$ by adding a cell.
This agrees with the branching rule in Table~\ref{tab:branching}
with $\lambda$ replaced by $\o{\lambda}$ and $\mu$ replaced by $\o{\mu}$.

\smallskip
\emph{Let $A_k$ be $R\!B_k(n)$ or $M_k(n)$.}
The non-empty sets in the first row of $S$ are all of size $1$ or $2$ and those
in the other rows are all of size $1$.
Hence, the set in $T$ containing $k$ is either $\{k\}$ or $\{a, k\}$ for some $a$.
In the first case, $S$ is obtained from $T$ by deleting the cell labelled $\{k\}$
and adding an empty cell to the first row, from which it follows that we have
$\o\lambda = \o\mu$ (when $\{k\}$ is in the first row of $T$) or
$\o\mu \rightarrow \o\lambda$ (otherwise).
In the second case, $S$ is obtained from $T$ by deleting the cell labelled
$\{a,k\}$ at the end of the first row and using the RSK insertion procedure to
insert $\{a\}$ into the second row. Thus, $\mu$ is obtained from $\lambda$ by
moving a cell from the first row to some other row. In other words, $\o\mu$ is
obtained from $\o\lambda$ by adding a cell.
This agrees with the branching rule in Table~\ref{tab:branching}
with $\lambda$ replaced by $\o{\lambda}$ and $\mu$ replaced by $\o{\mu}$.

\smallskip
\emph{Let $A_k$ be either $P_k(n)$ or $P\!P_k(n)$.}
Let $\tau$ denote the shape of the tableau obtained from $T$ by deleting the
set containing $k$.
If the set containing $k$ is $\{k\}$, then $S$ is obtained from $T$ by deleting
the cell labelled $\{k\}$ and adding an empty cell to the first row.
In this case, $\mu$ is obtained from $\lambda$ by moving a cell to the first row.
If the moved cell came from the first row, then $\o\mu = \o\lambda$,
and otherwise $\o\mu$ is obtained from $\o\lambda$ by deleting a cell.

If the set containing $k$ is not $\{k\}$, then $S$ is the tableau obtained from
$T$ by deleting the cell containing $k$ and inserting a set in the second row
using the RSK insertion procedure. Thus, $\mu$ is obtained from
$\lambda$ by deleting a cell and adding a cell in a row that is not the first
row. If the deleted cell belonged to the first row, then $\o\mu$ is obtained
from $\o\lambda$ by adding a cell. Otherwise, $\o\mu$ is obtained from
$\o\lambda$ by removing a cell and then adding a cell.
This is precisely the branching rule in Table~\ref{tab:branching},
with $\lambda$ replaced by $\o{\lambda}$ and $\mu$ replaced by $\o{\mu}$.
\end{proof}

The map $\phi$ from Proposition~\ref{prop:onestep} allows us to establish
a bijection between standard multiset tableaux and \emph{vacillating tableaux}.
A \defn{vacillating tableau} is a sequence partitions satisfying the condition
$\lambda^{(r)} \vdash n$ and $\lambda^{(r+\frac{1}{2})} \vdash n-1$
with $\lambda^{(r)} \leftarrow \lambda^{(r+\frac{1}{2})}$
and $\lambda^{(r+\frac{1}{2})} \rightarrow \lambda^{(r+1)}$
for $0 \leqslant r < k$ \cite{HalLew, BH}.
A different bijection appears in \cite{BH}. The bijection we provide here is
compatible with the families of tableaux for each of the subalgebras and the
Bratteli diagrams for those subalgebras.

\begin{prop}
\label{prop:bijection-with-vacillating-tableaux}
For each family of subalgebras $A_k$ in Table~\ref{tab:subalgebra}
and for each $\lambda$ a partition of $n\geqslant 2k$,
there is a bijection between $\SMT_{A_k}(\lambda)$ and
the set of vacillating tableaux of the form
\[
    \left((n) = \lambda^{(0)}, \lambda^{(\frac{1}{2})}, \lambda^{(1)}, \lambda^{(1\frac{1}{2})},
    \ldots,\lambda^{(k-\frac{1}{2})}, \lambda^{(k)} = \lambda\right),
\]
where
\[
    \o{\lambda^{(0)}} \Rightarrow \o{\lambda^{(1)}} \Rightarrow \cdots \Rightarrow \o{\lambda^{(k)}}
\]
is a path in the Bratteli diagram for the tower of
algebras $A_0 \subseteq A_1 \subseteq A_2 \subseteq \cdots$.
\end{prop}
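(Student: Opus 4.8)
The plan is to iterate the bijection $\phi$ of Proposition~\ref{prop:onestep} exactly $k$ times, peeling off the letters $k, k-1, \ldots, 1$ one at a time, and to read off the chain of shapes that appears as the sought-after vacillating tableau. Concretely, given $T \in \SMT_{A_k}(\lambda)$, I set $T^{(k)} = T$ and $\lambda^{(k)} = \lambda$, and for $r = k, k-1, \ldots, 1$ I write $\phi(T^{(r)}) = (T^{(r-1)}, \lambda^{(r-\frac12)})$, letting $\lambda^{(r-1)}$ denote the shape of $T^{(r-1)}$. By the defining properties of $\phi$, each $\lambda^{(r-\frac12)} \vdash n-1$ is obtained from $\lambda^{(r)}$ by deleting a cell and from $\lambda^{(r-1)}$ by deleting a cell, so the resulting chain $\big(\lambda^{(0)}, \lambda^{(\frac12)}, \ldots, \lambda^{(k)} = \lambda\big)$ is a vacillating tableau in the sense defined above. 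I define $\Phi(T)$ to be this chain.

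Two points settle the boundary. First, the bottom tableau $T^{(0)} \in \SMT(\lambda^{(0)}, 0)$ has empty content, hence only blank cells; since the column-strictness required of a semistandard multiset tableau forbids two equal (here, empty) entries in the same column, $\lambda^{(0)}$ has a single row, forcing $\lambda^{(0)} = (n)$ and $\o{\lambda^{(0)}} = \emptyset$, the unique level-$0$ vertex of the Bratteli diagram. Second, Theorem~\ref{th:edges}(1) guarantees $T^{(r)} \in \SMT_{A_r}(\lambda^{(r)})$ at every stage, so the construction never leaves the families attached to $A_0 \subseteq A_1 \subseteq \cdots$.

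Because $\phi$ is a bijection, iterating it and recording the half-steps shows that $\Phi$ is a bijection from the \emph{full} set $\SMT(\lambda, k)$ onto the set of all vacillating tableaux beginning at $(n)$ and ending at $\lambda$; here I use that $T^{(0)}$ is uniquely determined, so the vacillating tableau alone recovers the entire chain by applying $\phi^{-1}$ repeatedly. It then remains only to identify the image of the subset $\SMT_{A_k}(\lambda)$. The crucial observation is that the half-step $\lambda^{(r-\frac12)}$ encodes precisely the choice of Bratteli \emph{edge} joining $\o{\lambda^{(r-1)}}$ to $\o{\lambda^{(r)}}$: comparing $\o{\lambda^{(r-\frac12)}}$ with $\o{\lambda^{(r)}}$ and $\o{\lambda^{(r-1)}}$ reproduces exactly the ``remove $0$ or $1$ cell, then add $0$ or $1$ cell'' bookkeeping of Table~\ref{tab:branching}. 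Read this way, the case analysis proving Theorem~\ref{th:edges}(2) in fact exhibits a bijection between $\{\, T^{(r)} \in \SMT_{A_r}(\lambda^{(r)}) \mid \phi(T^{(r)}) = (T^{(r-1)}, \tau) \text{ for some } \tau \,\}$ and the $A_k$-Bratteli edges from $\o{\lambda^{(r-1)}}$ to $\o{\lambda^{(r)}}$. Combining this with Theorem~\ref{th:edges}(1) identifies $\Phi\big(\SMT_{A_k}(\lambda)\big)$ with precisely those vacillating tableaux whose induced sequence $\o{\lambda^{(0)}} \Rightarrow \cdots \Rightarrow \o{\lambda^{(k)}}$ is a path in the $A_k$-Bratteli diagram, and the converse directions of that same case analysis show every such vacillating tableau pulls back under $\phi^{-1}$ to a tableau in $\SMT_{A_k}(\lambda)$, giving the desired bijection.

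The main obstacle is the third paragraph: one must check that the half-integer shape $\lambda^{(r-\frac12)}$ records not merely a pair of adjacent vertices but an actual edge, with the correct multiplicity, in each of the nine Bratteli diagrams, uniformly across all the subalgebra families. This matching of vacillating half-steps with branching-rule edges is exactly the delicate, subalgebra-dependent work already carried out in Theorem~\ref{th:edges}, so the present argument is largely a matter of assembling that theorem with the invertibility of $\phi$. The only remaining care is in the boundary behavior, which is controlled by the standing hypothesis $n \geqslant 2k$: it keeps the first row of every $\lambda^{(r)}$ strictly longest throughout the chain, so that $\o{\lambda^{(r)}}$ (and $\o{\lambda^{(r-\frac12)}}$) is a genuine partition at every step and the identification with Bratteli paths is unambiguous.
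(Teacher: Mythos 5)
Your proof is correct and follows essentially the same route as the paper: iterate $\phi$ from Proposition~\ref{prop:onestep} to peel off $k, k-1, \ldots, 1$, record the integer and half-integer shapes, recover the tableau by reversing the steps, and invoke Theorem~\ref{th:edges} to identify the image of $\SMT_{A_k}(\lambda)$ with paths in the Bratteli diagram. Your additional care about the half-integer shape encoding the choice of edge (not just the pair of endpoints) and about $T^{(0)}$ being forced to be the unique empty tableau of shape $(n)$ only makes explicit points the paper's proof leaves implicit.
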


\begin{proof}
    Let $T^{(k)}$ be a tableau in $\SMT_{A_k}(\lambda)$.  If $\phi(T^{(k)}) = (S,\tau)$, then set
$T^{(k-1)}=S$, $\lambda^{(k-1)} = \operatorname{shape}(S)$, and $\lambda^{(k-\frac{1}{2})}=\tau$.
Repeat this process on $T^{(k-r)}$ for $k$ steps until $T^{(0)}$ is the
unique empty tableau in $\SMT_{A_0}((n))$.  Record at each step of this process
$\lambda^{(k-1)}$ and $\lambda^{(k-\frac{1}{2})}$.
Now given the sequence of partitions
\[
    \left((n) = \lambda^{(0)}, \lambda^{(\frac{1}{2})}, \lambda^{(1)}, \lambda^{(1\frac{1}{2})},
    \ldots,\lambda^{(k-\frac{1}{2})}, \lambda^{(k)} = \lambda\right)
\]
we can reverse the steps and recover the standard multiset tableau from the sequence.

A consequence of Theorem~\ref{th:edges} is that
the sequence of partitions
$$\o{\lambda^{(0)}} \Rightarrow \o{\lambda^{(1)}} \Rightarrow
\cdots \Rightarrow \o{\lambda^{(k)}}$$
is a path in the Bratteli diagram for the tower of algebras
$A_0 \subseteq A_1 \subseteq A_2 \subseteq \cdots$;
furthermore, for a partition $\lambda \vdash n$ with $n\geqslant 2k$,
the number of these paths that end on the partition $\o\lambda$
is equal to the number of standard multiset tableaux in $\SMT_{A_k}(\lambda)$.
\end{proof}

\begin{example} \label{ex:bijection-with-vacillating-tableaux}
Let $T$ and $S$ be the two tableaux from Example~\ref{ex:brauertab}.
Start with $T^{(9)} = T$. It follows from Example~\ref{ex:brauertab} that $T^{(8)} = S$
and so we record
\begin{center}
    $\lambda^{(9)} = (n-3,2,1)$,
    \ $\lambda^{(8\frac{1}{2})} = (n-4,2,1)$,
    \ and $\lambda^{(8)} = (n-4,2,1,1)$.
\end{center}
The remaining steps of the bijection
are given in Figure~\ref{fig:bijection-with-vacillating-tableaux}.
\begin{figure}[ht!]
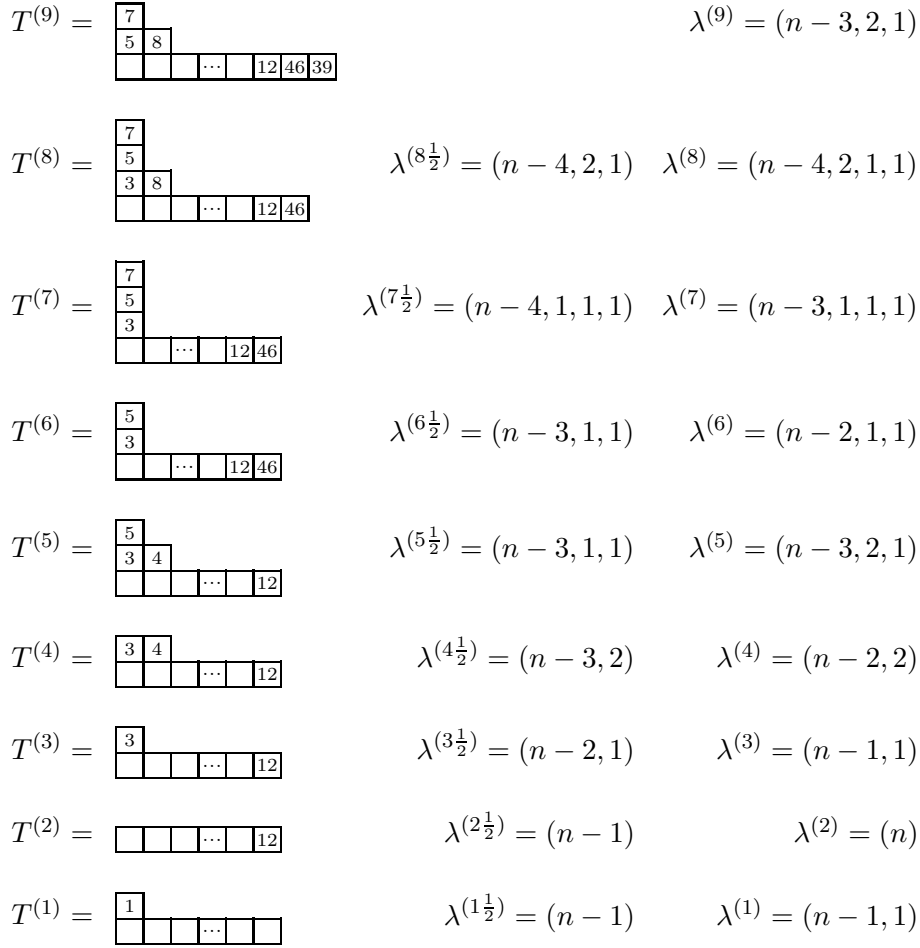

\squaresize=10pt
\begin{tabular}{l r r}
$T^{(9)} =\raisebox{-.25in}{\tiny{ \young{7\cr 5 & 8 \cr & & & \compactcdots& & 12 & 46 & 39 \cr}}}$
&&$\lambda^{(9)} = (n-3,2,1)$\\\\
$T^{(8)} =\raisebox{-.25in}{\tiny{ \young{7\cr 5\cr 3 & 8 \cr & & & \compactcdots& & 12 & 46\cr}}}$
&$\lambda^{(8\frac{1}{2})} = (n-4,2,1)$&$\lambda^{(8)} = (n-4,2,1,1)$\\\\
$T^{(7)} =\raisebox{-.25in}{\tiny{ \young{7\cr 5\cr 3 \cr & & \compactcdots& & 12 & 46\cr}}}$
&$\lambda^{(7\frac{1}{2})} = (n-4,1,1,1)$&$\lambda^{(7)} = (n-3,1,1,1)$\\\\
$T^{(6)} =\raisebox{-.2in}{\tiny{ \young{5\cr 3 \cr & & \compactcdots& & 12 & 46\cr}}}$
&$\lambda^{(6\frac{1}{2})} = (n-3,1,1)$&$\lambda^{(6)} = (n-2,1,1)$\\\\
$T^{(5)} =\raisebox{-.2in}{\tiny{ \young{5\cr 3 & 4 \cr & & & \compactcdots& & 12\cr}}}$
&$\lambda^{(5\frac{1}{2})} = (n-3,1,1)$&$\lambda^{(5)} = (n-3,2,1)$\\\\
$T^{(4)} =\raisebox{-.10in}{\tiny{ \young{3 & 4 \cr & & & \compactcdots& & 12\cr}}}$
&$\lambda^{(4\frac{1}{2})} = (n-3,2)$&$\lambda^{(4)} = (n-2,2)$\\\\
$T^{(3)} =\raisebox{-.10in}{\tiny{ \young{3 \cr & & & \compactcdots& & 12\cr}}}$
&$\lambda^{(3\frac{1}{2})} = (n-2,1)$&$\lambda^{(3)} = (n-1,1)$\\\\
$T^{(2)} =\raisebox{-.05in}{\tiny{ \young{& & & \compactcdots& & 12\cr}}}$
&$\lambda^{(2\frac{1}{2})} = (n-1)$&$\lambda^{(2)} = (n)$\\\\
$T^{(1)} =\raisebox{-.10in}{\tiny{ \young{1\cr& & & \compactcdots& &\cr}}}$
&$\lambda^{(1\frac{1}{2})} = (n-1)$&$\lambda^{(1)} = (n-1,1)$
\end{tabular}
\caption{An example of the bijection from Proposition~\ref{prop:bijection-with-vacillating-tableaux};
see Example~\ref{ex:bijection-with-vacillating-tableaux} for details.}
\label{fig:bijection-with-vacillating-tableaux}
\end{figure}
The corresponding sequence $\{\o{\lambda^{(i)}}\}_{i=1}^{9}$ is the following path in the
Bratteli diagram for the Brauer algebra.
\[
\emptyset \Rightarrow (1) \Rightarrow \emptyset
\Rightarrow (1) \Rightarrow (2) \Rightarrow (2,1)
\Rightarrow (1,1) \Rightarrow (1,1,1)
\Rightarrow (2,1,1) \Rightarrow (2,1).
\]
\end{example}

What we have presented in this section completes the connection
between the results in \cite{HJ} and those in \cite{HalLew}.
The insertion presented in Theorem~\ref{thm:settableaux} is a correspondence
between diagrams and pairs of standard multiset tableaux that motivates the tableaux that arise in
the paper \cite{HJ}.  Theorem~\ref{th:edges} then provides a correspondence
between standard multiset tableaux and paths in the Bratteli diagram.

Since the dimensions of the irreducibles are equal to the number of
paths in the Bratteli diagram, it follows that the number of tableaux
of a given shape is equal to the dimension of the irreducible representation.
This establishes the following result, which can also be proven by enumerating
the tableaux in Lemma~\ref{lemma:tableauxdesc} by a purely combinatorial
argument and verifying that the values agree with Table~\ref{tab:irreducibles}.
\begin{cor}
\label{irred-dim}
Let $n \geqslant 2k$ and $\lambda \vdash n$.
For each of the algebras $A_k$ described in Table~\ref{tab:subalgebra},
let $V^{\o\lambda}_{A_k}$ be the irreducible $A_k$-representation
indexed by $\o\lambda$. Then
\[
\dim\Big(V^{\o\lambda}_{A_k}\Big) = \# \SMT_{A_k}(\lambda).
\]
\end{cor}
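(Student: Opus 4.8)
The plan is to deduce the identity directly from two facts already in hand: the path-counting interpretation of dimensions in a Bratteli diagram, and the bijection of Proposition~\ref{prop:bijection-with-vacillating-tableaux}. First I would recall from Section~\ref{section.bratteli} that for each of the towers $A_0 \subseteq A_1 \subseteq A_2 \subseteq \cdots$ under consideration there is a unique irreducible $A_0$-representation and it is one-dimensional; by the induction argument noted there, the dimension of the irreducible $V^{\o\lambda}_{A_k}$ therefore equals the number of paths in the Bratteli diagram from the unique level-$0$ vertex to the vertex labelled $\o\lambda$.

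Second, I would invoke Proposition~\ref{prop:bijection-with-vacillating-tableaux}, which, for $n \geqslant 2k$, furnishes a bijection between $\SMT_{A_k}(\lambda)$ and precisely the set of vacillating tableaux whose associated sequence $\o{\lambda^{(0)}} \Rightarrow \cdots \Rightarrow \o{\lambda^{(k)}} = \o\lambda$ is such a path. Composing the two facts gives
\[
    \dim\big(V^{\o\lambda}_{A_k}\big)
    = \#\{\text{paths ending at } \o\lambda\}
    = \#\SMT_{A_k}(\lambda),
\]
which is the claim.

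A self-contained alternative, which I would include if one prefers not to route through vacillating tableaux, is a direct induction on $k$ powered by Theorem~\ref{th:edges}. The base case $k=0$ holds because $\SMT_{A_0}((n))$ is the single empty tableau and $\dim V^{\emptyset}_{A_0} = 1$. For the inductive step, part~(1) of Theorem~\ref{th:edges} shows that $\phi$ sends each $T \in \SMT_{A_k}(\lambda)$ to a pair $(S,\tau)$ with $S \in \SMT_{A_{k-1}}(\mu)$, while part~(2) shows that each fixed $S \in \SMT_{A_{k-1}}(\mu)$ is hit by exactly $m_{\o\mu,\o\lambda}$ tableaux, where $m_{\o\mu,\o\lambda}$ denotes the edge multiplicity from $\o\mu$ to $\o\lambda$ in the Bratteli diagram. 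Summing over $\mu$ yields
\[
    \#\SMT_{A_k}(\lambda)
    = \sum_{\mu \vdash n} m_{\o\mu,\o\lambda}\,\#\SMT_{A_{k-1}}(\mu)
    = \sum_{\mu \vdash n} m_{\o\mu,\o\lambda}\,\dim\big(V^{\o\mu}_{A_{k-1}}\big)
    = \dim\big(V^{\o\lambda}_{A_k}\big),
\]
the middle equality being the inductive hypothesis and the last the branching rule defining the Bratteli diagram.

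The essential content has already been absorbed into Theorem~\ref{th:edges} and Proposition~\ref{prop:bijection-with-vacillating-tableaux}, so I expect no genuine obstacle here: the corollary is a bookkeeping consequence of material proved earlier. The only points meriting care are the base case and the consistent use of the first-row-deletion convention relating a shape $\lambda \vdash n$ to its index $\o\lambda$, together with the observation that the hypothesis $n \geqslant 2k$ secures semisimplicity, so that the equality ``dimension equals number of paths'' is indeed valid for every $A_k$ in Table~\ref{tab:subalgebra}.
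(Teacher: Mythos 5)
Your proposal is correct and follows essentially the same route as the paper: the paper also deduces the corollary by combining the path-counting interpretation of irreducible dimensions in the Bratteli diagram with the correspondence between $\SMT_{A_k}(\lambda)$ and paths ending at $\o\lambda$ established via Theorem~\ref{th:edges} and Proposition~\ref{prop:bijection-with-vacillating-tableaux}. Your alternative induction on $k$ is just an unpacking of that same machinery, and the paper also notes (without carrying out) the other possible proof by direct enumeration against Table~\ref{tab:irreducibles}.
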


\begin{remark}
Benkart and Halverson~\cite{BH} give a bijection between standard multiset tableaux and
vacillating tableaux that is different from the correspondence that we have just
described.  Their bijection does not behave well under restriction to all of the subalgebras
$A_k$ and the corresponding standard multiset tableaux in $\SMT_{A_k}(\lambda)$.
This can be demonstrated via an example.  In $T\!L_2(4)$, there are two diagrams
and two pairs of tableaux (see the third diagram in the second row and the
first diagram in the fourth row of Example~\ref{ex:P2n}), and the
Benkart--Halverson produces the following vacillating tableau
\begin{equation*}
    \squaresize=12pt
    \raisebox{-3pt}{\tiny \young{&&&12\cr}}
    \xrightarrow{\text{~Benkart--Halverson~}}
    \Big(
        \squaresize=10pt
        \raisebox{-2pt}{\young{&&&\cr}\,, \young{&&\cr}\,, \young{&&&\cr}\,, \young{&&\cr}\,, \young{&&&\cr}}\
    \Big),
\end{equation*}
however $(\emptyset, \emptyset, \emptyset)$
is not a path in the Temperley--Lieb Bratteli diagram (see \cite[p. 19]{Jones1}).
On the other hand, our correspondence produces the following vacillating
tableau
\begin{equation*}
    \squaresize=12pt
    \raisebox{-3pt}{\tiny \young{&&&12\cr}}
    \xrightarrow{\text{~Proposition~\ref{prop:bijection-with-vacillating-tableaux}~}}
    \Big(
        \squaresize=10pt
        \raisebox{-2pt}{\young{&&&\cr}\,, \young{&&\cr}\,, \young{\cr&&\cr}\,, \young{&&\cr}\,, \young{&&&\cr}}\
    \Big),
\end{equation*}
and
\squaresize=8pt
$(\emptyset, \young{\cr}\,, \emptyset)$
is a path in the Temperley--Lieb Bratteli diagram.
\end{remark}

\bibliographystyle{amsalpha}
\bibliography{references}

\end{document}